\documentclass{amsart}
\usepackage{amsmath,amscd,xypic,amssymb,combelow,color,enumitem,graphicx,float,comment,mathtools}
\usepackage{tikz-cd}
\usepackage[hidelinks]{hyperref}
\definecolor{citation}{rgb}{0,.40,.80}
\hypersetup{
   colorlinks=true,
   citecolor=black,
    linkcolor=citation,
    }
\usepackage[style=alphabetic, backend=biber, eprint=true, maxbibnames=99, doi=false, url = false, isbn=false]{biblatex}
\usepackage[T2A]{fontenc}
\usepackage[russian,english]{babel}
\usepackage{quiver}
\usetikzlibrary{babel}
\newtheorem{theorem}{Theorem}[section]

\newtheorem{proposition}[theorem]{Proposition}
\newtheorem{lemma}[theorem]{Lemma}

\newtheorem{corollary}[theorem]{Corollary}

\newtheorem{definition}[theorem]{Definition}
\newtheorem{claim}[theorem]{Claim}
\newtheorem{example}[theorem]{Example}
\newtheorem{remark}[theorem]{Remark}

\def\fg{{\mathfrak{g}}}

\def\fn{{\mathfrak{n}}}

\def\GL{\mathrm{GL}}

\def\BC{{\mathbb{C}}}

\def\BN{{\mathbb{N}}}
\def\BF{{\mathbb{F}}}

\def\BQ{{\mathbb{Q}}}
\def\BZ{{\mathbb{Z}}}

\DeclareMathOperator{\Tr}{\mathrm{Tr}}
\def\vir{\mathrm{vir}}

\DeclareMathOperator{\ptau}{{}^{\mathfrak{p}}\tau}
\DeclareMathOperator{\JH}{\mathrm{JH}}
\DeclareMathOperator{\Per}{\mathrm{Perv}}

\def\oi{\overline{i}}

\def\CA{{\mathcal{A}}}

\def\DD{{\mathcal{D}}}

\def\CH{{\mathcal{H}}}

\def\CM{{\mathcal{M}}}

\def\CS{{\mathcal{S}}}

\def\CV{{\mathcal{V}}}

\def\sym{\textrm{sym}}
\def\Sym{\textrm{Sym}}

\def\bd{{\mathbf{d}}}

\def\bk{{\mathbf{k}}}

\def\br{{\mathbf{r}}}

\def\bs{\boldsymbol{\varsigma}}
\def\bell{\boldsymbol{\ell}}

\def\nn{{\mathbb{N}^I}}
\def\zz{{\mathbb{Z}^I}}

\def\bm{{\boldsymbol{m}}}
\def\bn{{\boldsymbol{n}}}

\def\b0{{\boldsymbol{0}}}

\def\loccit{\emph{loc.~cit.~}}

\def\hdeg{\text{hdeg }}

\def\Spec{\text{Spec}}

\def\Sym{\text{Sym}}

\def\edge{\Omega}
\def\dedge{\bar{\edge}}
\def\dalpha{\bar{\alpha}}
\def\dQ{\bar{Q}}
\def\tQ{\tilde{Q}}

\def\tW{\tilde{W}}

\def\ring{R}
\def\field{\BF}

\def\ozeta{\bar{\zeta}}

\def\soft{\begin{otherlanguage*}{russian}Ь \end{otherlanguage*}}
\def\hard{\begin{otherlanguage*}{russian}Ъ \end{otherlanguage*}}

\def\Ts{\Phi}
\def\eTs{\Phi}

\def\kh{\mathcal{A}^{T}_{\tQ,\tW}}
\def\khrat{\mathcal{A}^{T}_{\tQ,\tW,\text{rat}}}
\def\khrate{\mathcal{A}^{T}_{\tQ,\tW,\emph{rat}}}
\def\kha{\mathcal{A}^{T,\omega\text{-nilp}}_{\tQ,\tW}}
\def\kharat{\mathcal{A}^{T,\omega\text{-nilp}}_{\tQ,\tW,\text{rat}}}
\def\kharate{\mathcal{A}^{T,\omega\emph{-nilp}}_{\tQ,\tW,\emph{rat}}}

\def\kzero{\mathcal{A}^{T}_{\tQ}}

\def\khan{\mathcal{A}^{T,\omega\text{-nilp}}_{\tQ,\tW,\bn}}

\def\khaloc{\mathcal{A}^{T,\omega\text{-nilp}}_{\tQ,\tW,\text{loc}}}

\def\ekha{\mathcal{A}^{T,\omega\emph{-nilp}}_{\tQ,\tW}}

\def\bps{\mathfrak{g}_{\tilde{Q},\tilde{W}}}

\def\tbpsw{\mathfrak{g}^{T,\omega\text{-nilp}}_{\tilde{Q},\tilde{W}}}
\def\tbpswn{\mathfrak{g}^{T,\omega\text{-nilp}}_{\tilde{Q},\tilde{W},\bn}}
\def\tbps{\mathfrak{g}^T_{\tilde{Q},\tilde{W}}}
\def\tbpsn{\mathfrak{g}^T_{\tilde{Q},\tilde{W},\bn}}

\def\point{(\cdot)}

\begin{document}

\title[The loop-nilpotent cohomological Hall algebra]{\Large{\textbf{The loop-nilpotent cohomological Hall algebra}}} 

\author[Shivang Jindal and Andrei Negu\cb t]{Shivang Jindal and Andrei Negu\cb t}

\address{École Polytechnique Fédérale de Lausanne (EPFL), Lausanne, Switzerland} 
\email{shivang.jindal@epfl.ch}

\address{École Polytechnique Fédérale de Lausanne (EPFL), Lausanne, Switzerland \newline \text{ } \ \ Simion Stoilow Institute of Mathematics (IMAR), Bucharest, Romania} 
\email{andrei.negut@gmail.com}
	
\begin{abstract} We give an explicit shuffle algebra model for the loop-nilpotent cohomological Hall algebra (CoHA) of a tripled quiver with canonical cubic potential. As consequences, we (1) relate the loop-nilpotent CoHA to the quantized Coulomb branch algebra of the corresponding quiver gauge theory, (2) show that the loop-nilpotent CoHA is supercommutative after specialization at $\hbar=0$, (3) give generators for both the loop-nilpotent CoHA and the full preprojective CoHA, and (4) obtain an explicit characterization of the BPS Lie algebra of the full preprojective CoHA via certain degree and divisibility conditions. This gives a new formula for the Kac polynomials of the quiver in terms of the dimensions of certain vector spaces of polynomials. We also prove a conjecture on the spherical generation of the localized shuffle algebra and show that for ADE quivers, the loop-nilpotent CoHA is the positive half of Drinfeld-Gavarini dual of the Yangian. \end{abstract}

\maketitle

\bigskip

\section{Introduction}
\label{sec:intro}

\medskip

\subsection{Motivation}
\label{sub:motivation intro}

Fix a quiver $Q$ with vertex set $I$ and arrow set $\edge$. We define the doubled quiver $\bar{Q}$ by adding an arrow $\dalpha : j \rightarrow i$ for every arrow $\alpha : i \rightarrow j$ in $\edge$, and we define the tripled quiver $\tQ$ by further adding a loop $\omega_i : i \rightarrow i$ at every vertex. With this in mind, for any $\bn = (n_i)_{i \in I} \in \nn$ we consider the affine space of $\bn$-dimensional representations of the tripled quiver
\begin{equation}
\label{eqn:representation intro}
\text{Rep}_{\bn}(\tQ) = \Big\{\underset{\omega_i}{\underset{\circlearrowleft}{\BC}}^{n_i} \xleftrightharpoons[\alpha]{\dalpha} \underset{\omega_j}{\underset{\circlearrowleft}{\BC}}^{n_j}\Big\}
\end{equation}
An important role in the present paper will be played by the closed subset
\begin{equation}
\label{eqn:closed subset intro}
\text{Rep}^{\omega\text{-nilp}}_{\bn}(\tQ) \hookrightarrow \text{Rep}_{\bn}(\tQ) 
\end{equation}
corresponding to the condition that all the loops $\{\omega_i\}_{i \in I}$ act by nilpotent operators. The algebraic group
$$
\GL_{\bn} = \prod_{i \in I} \GL_{n_i}(\BC)
$$ 
acts by conjugation on the affine space \eqref{eqn:representation intro}, and preserves the closed subset \eqref{eqn:closed subset intro}. This allows us to define the stack of loop-nilpotent $\bn$-dimensional quiver representations
\begin{equation}
\label{eqn:stacks intro}
\mathfrak{M}^{\omega\text{-nilp}}_{\bn}(\tQ) = \mathrm{Rep}^{\omega\text{-nilp}}_{\bn}(\tQ)/\GL_{\bn}
\end{equation}
We consider an algebraic torus $T \cong (\BC^*)^r$, and assume that we have chosen
\begin{equation}
\label{eqn:parameters intro}
\{\hbar, u_\alpha\}_{\alpha \in \edge} \in \ring = \BZ[\text{Lie}(T)]
\end{equation}
This choice induces an action of $T$ on the stack \eqref{eqn:stacks intro}, where the torus acts with the equivariant parameter $-u_\alpha$ on the arrow $\alpha : i \rightarrow j$, with the equivariant parameter $-\hbar + u_\alpha$ on the arrow $\dalpha : j \rightarrow i$ that is opposite to $\alpha$, and with the equivariant parameter $\hbar$ on all the loops $\omega_i$. The canonical cubic potential 
$$
\tW = \sum_{\alpha : i \rightarrow j} \Big( \dalpha \alpha \omega_i - \omega_j \alpha \dalpha \Big)
$$
therefore determines a $T$-invariant function on the stacks \eqref{eqn:stacks intro}. Following the general theory of \cite{KS}, we may use the data above to define the loop-nilpotent cohomological Hall algebra (CoHA for short)
\begin{equation}
\label{eqn:coha intro}
\kha = \bigoplus_{\bn \in \mathbb{N}^{I}}  H(\mathfrak{M}^{\omega\text{-nilp}}_{\bn}(\tilde{Q})/T, i^{!}_{\omega}\varphi_{\Tr(\tW)}\mathbb{Z}^{\vir}_{} )
\end{equation}
(see Subsection \ref{sub:coha} for details).   The purpose of the present paper is to study the cohomological Hall algebra \eqref{eqn:coha intro}, and to prove the following facts about it:

\medskip 

\begin{itemize}[leftmargin=*]

\item The shuffle realization of $\kha$, i.e. there is an isomorphism
\begin{equation}
\label{eqn:iso intro}
\kha \xrightarrow{\sim} \CS^+
\end{equation}
where $\CS^+$ is an explicit $\ring$-algebra of polynomials, see Subsection \ref{sub:small shuffle}. 

\medskip 

\item The supercommutativity of the specialization
$$
\kha \Big|_{\hbar = 0}
$$
in Corollary \ref{nilCoHAcommute}.

\medskip 

\item A set of generators $\{e_{\bn,g}\}$ of $\CS^+ \cong \kha$, for $\bn \in \nn$ and $g \in \ring[z_{i1},\dots,z_{in_i}]_{i\in I}^{\sym}$ \footnote{We remark that generators of nilpotent CoHA's have been studied previously in \cite{schiffmanncohagenerators}, but our nilpotency condition is different. In terms of tripled quivers, our nilpotency is about the added loops being nilpotent, while \loccit studies the setting where the nilpotency condition is on the original arrows of the quiver. Our result coincide with that of \loccit in the case of Jordan quiver (see Section \ref{sec:examples}).}.

\medskip 

\item The proof of a conjecture of \cite{jindalnegutbps} and \cite{NGen} on the spherical generation of 
$$
\CS^+_{\text{loc}} \cong \khaloc
$$
(the subscript ``loc'' means the respective $\ring$-algebras are tensored with $\text{Frac}(\ring)$).

\medskip 

\item The construction of a shifted double loop-nilpotent CoHA, as follows. Make
\begin{equation}
\label{eqn:double intro}
\CS^{\br}_{\text{loc}} = \CS^+_{\text{loc}} \otimes (\text{Cartan subalgebra}) \otimes \CS^{+,\text{op}}_{\text{loc}}
\end{equation}
into an algebra using a twist by $\br \in \zz$, see Subsection \ref{sub:double} for details. Then let
\begin{equation}
\label{eqn:shuffle intro}
\CS^{\br} = \Big(\text{shifted double of }\CS^+ \Big)
\end{equation}
be the $\ring$-subalgebra of \eqref{eqn:double intro} generated by the $e_{\bn,g} \in \CS^+$ and by their counterparts in the opposite algebra $\CS^{+,\text{op}}$. Because of the isomorphism \eqref{eqn:iso intro}, we may interpret the $\ring$-algebra \eqref{eqn:shuffle intro} as our shifted double loop-nilpotent CoHA.

\medskip 

\item The construction of a surjective homomorphism 
$$
\Big(\text{shifted double of }\kha \Big) \twoheadrightarrow \CH_{\bd|\bk,\bell}
$$
where the right-hand side denotes the quantized Coulomb branch algebra associated to the (framed, equivariant) quiver gauge theory in \cite{BFN}, see Subsection \ref{sub:coulomb}.

\medskip 

\item A precise relationship between the BPS Lie algebra
$$
\tbpsw \subset \kharat
$$
(see Subsection \ref{sub:cohomological integrality}) and of its analog without the loop-nilpotency condition, which implies that the loop-nilpotent and usual Kac polynomials of $Q$ are equal up to multiplication by $\hbar$ (the subscript ``rat'' refers to tensoring with $\BQ$).

\medskip 

\item Combining with \cite{jindalnegutbps}, we obtain a full description of $\tbpsw$ and $\tbps$ (and thus of the Kac polynomials of $Q$) in terms of symmetric polynomials (see Subsection \ref{sub:bps explicit}) \footnote{It is known due to \cite{botta2023okounkovs} and \cite{schiffmann2024cohomologicalhallalgebrasquivers} that the BPS Lie algebra $\tbps$ is isomorphic to the positive half of the Maulik–Okounkov Lie algebra (\cite{moyangian}), that is defined using stable envelopes. It can also be identified with the positive half of a generalized Kac–Moody Lie algebra, whose Chevalley generators are given by the intersection cohomology of the highly singular good moduli spaces of representations of the preprojective algebra (see \cite{DHS}). Although these descriptions are conceptually interesting, they are not readily amenable to explicit computation. Our results thus provide an effective method for computing this Lie algebra explicitly.}.

\end{itemize}

\medskip 

\noindent Finally, in Section \ref{sec:examples}, we provide examples of the constructions above for $Q$ a simply-laced Dynkin quiver (in which case we identify the loop-nilpotent CoHA with the Drinfeld-Gavarini dual of the Yangian) and for $Q$ a Jordan quiver (in which case we encounter the Borel-Moore homology of the semi-nilpotent commuting stack). We will now provide additional details on the results above.

\medskip
 
\subsection{Shuffle algebras} 
\label{sub:shuffle intro}

We recall the following $\ring$-algebra homomorphism
\begin{equation}
\label{eqn:intro 1}
\kha \rightarrow \kzero
\end{equation}
where the right-hand side denotes the CoHA with zero potential and without the loop-nilpotency condition. Using well-known arguments, we show that the map \eqref{eqn:intro 1} is injective in Subsection \ref{sub:injection}. As observed in \cite{KS}, we have an isomorphism
\begin{equation}
\label{eqn:intro 2}
\kzero \xrightarrow{\sim} \CV^+ = \bigoplus_{\bn = (n_i)_{i \in I} \in \nn} \ring[z_{i1},\dots,z_{in_i}]^{\sym}_{i \in I}
\end{equation}
where the vector space in the right-hand side is made into an $\ring$-algebra using the Feigin-Odesskii shuffle product \eqref{eqn:mult}. Following the trigonometric case studied in \cite{JNCoulomb}, we construct an explicit $\ring$-subalgebra
\begin{equation}
\label{eqn:intro 3}
\CS^+ \subset \CV^+
\end{equation}
consisting of polynomials that satisfy the divisibility conditions in Definition \ref{def:shuffle int}. Combining \eqref{eqn:intro 1}, \eqref{eqn:intro 2}, \eqref{eqn:intro 3} allows us to state our first main result.

\medskip 

\begin{theorem}
\label{thm:intro coha}

The composition of the maps \eqref{eqn:intro 1} and \eqref{eqn:intro 2} induces an isomorphism
\begin{equation}
\label{eqn:intro coha}
\ekha \xrightarrow{\sim} \CS^+
\end{equation}
under the Assumption \soft of \eqref{eqn:assumption soft}.

\end{theorem}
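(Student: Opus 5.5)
The plan has three parts: identify the image of $\kha$ inside $\CV^+$ via the injective map \eqref{eqn:intro 1} followed by \eqref{eqn:intro 2}, show this image lies in $\CS^+$, and show it is all of $\CS^+$.

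\emph{Reduction to a zero-potential computation.} I would first apply dimensional reduction along the loops $\omega_i$. Since $\tW$ is linear in the $\omega_i$, the critical cohomology of $\text{Rep}_{\bn}(\tQ)$ with potential $\Tr(\tW)$ becomes the Borel--Moore homology of the preprojective stack $\mu^{-1}(0)/\GL_{\bn}$. In the same way, the loop-nilpotent version $i^!_\omega\varphi_{\Tr(\tW)}$ becomes the Borel--Moore homology of the stack of pairs $(\rho,\omega)$, where $\rho$ is a representation of $\bar Q$, $\omega$ is nilpotent, and $\omega$ commutes with $\rho$ through the moment map relation. I then stratify the nilpotent cone of $\omega$ by Jordan type. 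Equivalently, I filter by the flag $\ker\omega\subset\ker\omega^2\subset\cdots$, each of whose strata is an affine bundle over a product of smaller stacks. This gives a spanning set of $\kha$ by iterated shuffle products of pushforwards from the ``$\omega=0$ blocks''. Under \eqref{eqn:intro 2}, the class of the stratum on which $\omega$ acts by a single Jordan block of size $k$ in dimension $\bn$ becomes a shuffle element with a $\hbar$-shifted wheel of variables $z, z+\hbar,\dots,z+(k-1)\hbar$.

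\emph{Image is contained in $\CS^+$.} Next I would check that every element of the image satisfies the divisibility conditions of Definition \ref{def:shuffle int}. Containment for the generators $e_{\bn,g}$ should come from a direct computation: the Euler class of the normal bundle to the nilpotent locus contributes factors $(z_{ia}-z_{ib}-\hbar)$, and the Euler class of the arrows contributes factors $(z_{ia}-z_{jb}-u_\alpha)$, which together give the required vanishing and divisibility. Closure of $\CS^+$ under the shuffle product \eqref{eqn:mult} would be checked by the standard wheel-condition argument of \cite{JNCoulomb}, adapted from the trigonometric to the rational case.

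\emph{Surjectivity.} This is the main obstacle. I would filter $\CS^+$ by degree and by the ``specialization'' maps that set variables equal to $\hbar$-strings, $z_{i1}=w, z_{i2}=w+\hbar,\dots$. I would then show that the associated graded of $\CS^+$ is bounded above by the associated graded of the subalgebra generated by the $e_{\bn,g}$. Here Assumption \soft{} is needed: it should guarantee that the relevant parameters $u_\alpha,\hbar$ are generic enough in $\ring$ that the specialization maps detect everything integrally. If the integral comparison proves hard, I would first prove the statement after localization, where spherical generation and a dimension count via Kac polynomials are available. I would then descend to $\ring$ by showing that both sides are free with matching graded ranks and that the inclusion is saturated, using the divisibility conditions to rule out torsion in the cokernel.
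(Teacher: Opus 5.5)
Your reverse inclusion is the paper's. $\CS^+$ is generated by the $e_{\bn,g}$ (Theorem \ref{thm:shuffle int}, proved by descending induction along the maps $\mathrm{Spec}_P$), and $e_{\bn,g}=\iota(\varepsilon_{\bn,g})$ with $\varepsilon_{\bn,g}=g\cdot[\omega=0]_{\bn}$. The gap is in the inclusion $\mathrm{Im}\,\iota\subseteq\CS^+$, and it has two parts.

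\textbf{The reduction step fails as stated.} Dimensional reduction along a set of coordinates commutes with $i^!$ only for closed substacks pulled back from the base of the projection that forgets those coordinates. If you reduce along the $\omega_i$, nilpotency of $\omega$ is a condition on exactly the coordinates being integrated out. The stack you describe, commuting pairs $(\rho,\omega)$ with $\mu(\rho)=0$, is the nilpotent part of the critical locus of $\Tr(\tW)$. Its Borel--Moore homology is not the vanishing-cycle cohomology. The paper instead reduces along the arrows $\dalpha$. This lands in $H^{\mathrm{BM}}$ of $Z^{\omega\text{-nilp}}_{Q^+,\bn}=\{(\alpha,\omega):\alpha\omega_i=\omega_j\alpha,\ \omega\text{ nilpotent}\}$, where $\omega$ survives and nilpotency is a condition on the base.

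\textbf{The spanning claim is unproven, and its justification is false.} Your containment argument assumes that $\kha$ is spanned by iterated products of the classes $\varepsilon_{\bn,g}$. The paper obtains that generation statement only as a \emph{consequence} of the isomorphism. The kernel-flag strata are not affine bundles over products of $\omega=0$ stacks, because $\omega$ induces injective $Q$-module maps $\ker\omega^{k+1}/\ker\omega^{k}\hookrightarrow\ker\omega^{k}/\ker\omega^{k-1}$. Already for the Jordan quiver with $n=2$ and $\omega$ a single Jordan block, commutativity forces $\alpha$ to act by the same scalar on $\ker\omega$ and on $\BC^2/\ker\omega$. So that stratum maps only onto the diagonal of $\mathfrak{M}_1(Q)\times\mathfrak{M}_1(Q)$.

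Stratifying by the Jordan type of $\omega$ alone does give vector bundles, and hence an even filtration. You would still have to show, over $\BZ$, that Hall products of $\omega=0$ classes surject onto each stratum modulo smaller strata. That is where the factors $\hbar^{n_i}\prod_a n_i^{(a)}!$ and the $u_\alpha$-factors of \eqref{eqn:factor} come in; it is essentially a geometric counterpart of Claim \ref{claim}, and you do not address it.

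The paper avoids all of this. It proves $\mathrm{Im}\,\iota\subseteq\CS^+$ directly for every class, by the cohomological version of the Zhao-type argument of \cite[Proposition 2.2]{JNCoulomb}, so no generation statement on the geometric side is needed.

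\textbf{Smaller points.}
\begin{itemize}
\item $e_{\bn,g}$ has no $u_\alpha$-factors. It lies in $\CS^+$ because $\mathrm{Spec}_P(e_{\bn,g})=0$ whenever $P$ has a part of size $\geq 2$, and its diagonal factors supply $\hbar^{|\bn|}$.
\item Your fallback is circular. Spherical generation (Theorem \ref{thm:shuffle loc}) is itself deduced from Theorem \ref{thm:shuffle int}.
\item There is no independent formula for the graded rank of $\CS^+$ to compare with the Kac-polynomial count.
\item You give no argument for the saturation of the inclusion, and that is the whole integral content of the statement.
\end{itemize}
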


\medskip 

\noindent A key tool in our proof of the isomorphism \eqref{eqn:intro coha} is the fact that the shuffle algebra $\CS^+$ is generated as an $\ring$-algebra by the explicit elements $\{e_{\bn,g}\}_{\bn \in \nn, g \in \ring[z_{i1},\dots,z_{in_i}]_{i \in I}^{\sym}}$ of \eqref{eqn:special}. This will allow us to prove \cite[Conjecture 4.3]{jindalnegutbps} and \cite[Conjecture 2.12]{NGen}, namely the following result.

\medskip 

\begin{theorem}
\label{thm:intro spherical}

Let $\field = \emph{Frac}(\ring)$ and consider the localized shuffle algebra
$$
\CS^+_{\emph{loc}} = \CS^+ \otimes_{\ring} \field 
$$
Under Assumptions \hard and \soft of \eqref{eqn:assumption hard} and \eqref{eqn:assumption soft}, $\CS^+_{\emph{loc}}$ is generated by
$$
\Big\{e_{i,k} = z_{i1}^k \Big\}_{i \in I, k \geq 0}
$$

\end{theorem}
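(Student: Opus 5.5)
The plan is to use the generation result already announced: $\CS^+$ is generated as an $\ring$-algebra by the elements $e_{\bn,g}$ of \eqref{eqn:special}, for $\bn \in \nn$ and $g$ symmetric. After tensoring with $\field$, the elements $e_{\bn,g}$ generate $\CS^+_{\text{loc}}$. It therefore suffices to show that each $e_{\bn,g}$ lies in the $\field$-subalgebra $\CA \subset \CS^+_{\text{loc}}$ generated by the $z_{i1}^k$. We argue by induction on $|\bn| = \sum_i n_i$. The base case $|\bn|=1$ is immediate, since then $e_{\bn,g}$ is a polynomial in one variable $z_{i1}$ and hence a combination of the $e_{i,k}$.

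For the induction step, I expect $e_{\bn,g}$ to have the form $g \cdot \prod(\text{fixed factors})$. These factors should be of type $\prod_{i,a\neq b}(z_{ia}-z_{ib}+\hbar)$ together with edge factors, chosen to be the minimal element that satisfies the divisibility conditions of Definition \ref{def:shuffle int}. I will compute the shuffle product
$$e_{i_1,k_1} * \cdots * e_{i_N,k_N}$$
with $N=|\bn|$ and the vertices ordered appropriately. The symmetrization formula \eqref{eqn:mult} writes this as $\Sym$ of $z^{k}$ times a product of rational functions $\zeta(z_a/z_b)$. After clearing the known common factor, the products $e_{i_1,k_1}*\cdots*e_{i_N,k_N}$ span, over $\field$, a space of the form
$$\Big(\prod \text{fixed factors}\Big)\cdot \Sym\Big(z^{\mathbf k}\cdot \prod_{a<b}\tfrac{\text{linear}}{z_a - z_b}\Big).$$
Arbitrary combinations of the monomials $z^{\mathbf k}$ therefore give $\text{fixed}\cdot \Sym(f\cdot \Delta)$ for any polynomial $f$. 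The key claim is that symmetrization against this kernel yields all symmetric multiples of the minimal factor, modulo products of lower-degree pieces. The lower-degree pieces lie in $\CA$ by induction, since any element of $\CS^+_{\text{loc}}$ in smaller degree is a combination of products of $e_{\bm,h}$ with $|\bm|<|\bn|$.

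The main obstacle is showing that this symmetrization is surjective onto $\text{(minimal factor)}\cdot \field[z]^{\sym}$ modulo decomposables. Over $\ring$ this fails, which is why localization is needed. My first approach is a specialization argument in the spirit of wheel conditions. I would filter by the leading behaviour as the variables $z_{ia}$ are set to $z + (\text{multiples of } \hbar)$, in the manner of the Feigin–Odesskii / Negu\cb t specialization maps. On the associated graded, both sides can then be compared by dimension counting, degree by degree. Assumption \hard should guarantee that the relevant $\hbar$- and $u_\alpha$-linear factors are nonzero in $\field$, so dividing by them is allowed. If that fails, I would fall back on dimension counting in each graded piece. I would compare the rank of the span of spherical products with the graded rank of $\CS^+$ given by its generation by the $e_{\bn,g}$, and show these agree after a generic specialization of the equivariant parameters.
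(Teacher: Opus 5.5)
There is a genuine gap. Your reduction is fine: by Theorem \ref{thm:shuffle int} (which needs only the geometric assumption), $\CS^+\otimes_\ring\field$ is spanned by products of the $e_{\bn,g}$. So it suffices to put each $e_{\bn,g}$ in the $\field$-algebra $\CA$ generated by the $z_{i1}^k$.

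(Incidentally, $e_{\bn,g}=g\prod_i\prod_{1\le a,b\le n_i}(z_{ib}-z_{ia}+\hbar)$ has no edge factors. It lies in $\CS^+$ because $\text{Spec}_P(e_{\bn,g})=0$ whenever $P$ has a part of size $\geq 2$.)

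The induction on $|\bn|$, however, does no work. $\CA$ is generated in degree $|\bn|=1$, and any shuffle product of $N\geq 2$ spherical elements is decomposable. So your ``key claim'' (that $e_{\bn,g}$ lies in the span of spherical products modulo products of lower-degree pieces) just says that $e_{\bn,g}$ is decomposable in $\CS^+_{\text{loc}}$. Together with the induction, that is equivalent to the theorem itself.

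For this claim you give directions, not an argument. The specialization maps $\text{Spec}_P$ constrain $\CS^+_{\text{loc}}$ from above; this is how Theorem \ref{thm:shuffle int} is proved. Closing the argument would require the spherical products to realize every allowed value of $\text{Spec}_P$ on $\bigcap_{P'>P}\text{Ker}(\text{Spec}_{P'})$, for every $I$-partition $P$. That is the analogue of Claim \ref{claim} with the $e_{\bn,g}$ replaced by products of $e_{i,k}$'s, and it is exactly the hard part. Your fallback dimension count also has no input. Generation by the $e_{\bn,g}$ does not compute the graded rank of $\CS^+$, since their products are highly dependent. Nor do you name any parameter value where the spherical span is known to be everything.

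The paper does not attack the rational problem directly. It imports the trigonometric result \cite[Theorem 1.2]{Wheel}: the shuffle algebra of Laurent polynomials satisfying the trigonometric wheel conditions is generated by the $w_{i1}^k$, $k\in\BZ$. The steps are:
\begin{enumerate}
\item Lift $e_{\bn,g}$ to $\widetilde{e}_{\bn,g}=g(1-w_{i1},\dots)\prod_i\prod_{a,b}\bigl(1-\frac{w_{ia}}{w_{ib}e^{\hbar}}\bigr)$, which lies in that trigonometric algebra.
\item Write $\widetilde{e}_{\bn,g}$ as a linear combination of symmetrized monomials times $\widetilde\zeta$ factors.
\item Substitute $w_{ia}=e^{z_{ia}}$ and extract the lowest-order part in $z_{ia},\hbar,u_\alpha$. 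Under this, $\widetilde\zeta_{ij}(e^x)$ degenerates to $\zeta_{ij}(x)$ and $\widetilde e_{\bn,g}$ to $e_{\bn,g}$.
\end{enumerate}
This expresses $e_{\bn,g}$ as a combination of rational shuffle products of the $z_{i1}^{\ell}$. The idea your proposal is missing is either this trigonometric-to-rational degeneration, or a genuine proof of the rational analogue of \cite[Theorem 1.2]{Wheel}. The latter is a substantial result, not a routine filtration or dimension count.
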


\medskip

\subsection{Doubles and Coulomb branches}
\label{sub:doubles intro}

In the seminal paper \cite{BFN}, Braverman-Finkelberg-Nakajima associated to any dimension vectors $\bd \in \nn$ (for the gauge vertices) and $\bk,\bell \in \nn$ (for the framing vertices) an $\ring$-algebra
$$
\CH_{\bd|\bk,\bell}  
$$
called the quantized Coulomb branch algebra. They also showed how to realize this algebra inside a ring of difference operators (see also \cite{GKLO, KWWY} for closely related constructions), which has been related to shuffle algebras by the construction of Finkelberg-Frassek-Tsymbaliuk (\cite{FT2, FrT,Tsymbaliuk}). With this in mind, we prove the following result analogous to the main theorem of \cite{JNCoulomb}.

\medskip 

\begin{theorem}
\label{thm:intro main}

For any $\bd,\bk,\bell \in \nn$, there is a surjective $\ring$-algebra homomorphism
\begin{equation}
\label{eqn:main}
\Big(\bd^\vee-\bk-\bell\text{ shifted double of }\ekha \Big) \twoheadrightarrow \CH_{\bd|\bk,\bell} 
\end{equation}
under the Assumption \soft of \eqref{eqn:assumption soft}, where $\bd^\vee$ is defined in \eqref{eqn:vee}.
    
\end{theorem}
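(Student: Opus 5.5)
We plan to follow the strategy of \cite{JNCoulomb}, replacing the trigonometric setting by the rational one. By Theorem \ref{thm:intro coha}, the shifted double of $\kha$ is identified with the shifted double $\CS^{\br}$ of $\CS^+$, with $\br = \bd^\vee-\bk-\bell$. So it suffices to construct a surjection $\CS^{\br} \twoheadrightarrow \CH_{\bd|\bk,\bell}$.

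\textbf{Step 1: a map on the localized double.} Use the Braverman--Finkelberg--Nakajima embedding of $\CH_{\bd|\bk,\bell}$ into an algebra of difference operators in variables $\{w_{ia}\}_{i\in I, 1\leq a\leq d_i}$ with shifts $D_{ia}: w_{ia}\mapsto w_{ia}+\hbar$, after localization at the roots $w_{ia}-w_{jb}-\text{const}$. Then write down the Finkelberg--Frassek--Tsymbaliuk type homomorphism
$$
\Psi:\CS^{\br}_{\text{loc}}\to \text{(localized difference operators)}.
$$
It sends $R^+ \in \CS^+$ to a symmetrization of $R^+(w_{i a_1},\dots)$ times explicit rational factors built from $\bk$ (respectively $\bell$) framing parameters and the arrow weights $u_\alpha$, $-\hbar+u_\alpha$, followed by the shift operators $D_{ia}^{\pm1}$. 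It sends $R^-$ to the analogous expression with the opposite shifts, and it sends Cartan elements to generating series of products of $w_{ia}$. Checking that $\Psi$ respects the shuffle product is a formal computation, since the shuffle factor \eqref{eqn:mult} is exactly the commutation factor of the rational functions with the shifts. Checking the $e$--$f$ commutation relation in $\CS^{\br}_{\text{loc}}$ reduces to a residue computation, and this is where the twist $\br=\bd^\vee-\bk-\bell$ is forced: the degree of the Cartan series must match the pole order at infinity.

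\textbf{Step 2: integrality.} Show that $\Psi$ maps the integral generators $e_{\bn,g}$ and their opposites into $\CH_{\bd|\bk,\bell}$, not merely into its localization. The natural approach is to match $\Psi(e_{\bn,g})$ with the BFN monopole operators, namely homology classes of fibers of the convolution variety over minuscule (in each vertex, $\varpi_{n_i}$ and $-\varpi_{n_i}$ type) coweights dressed by the class $g$. Their images under the abelianization/localization embedding are given by explicit sums over fixed points. I expect these sums to coincide with $\Psi(e_{\bn,g})$ precisely because the special elements \eqref{eqn:special} encode the divisibility conditions of Definition \ref{def:shuffle int}, which mirror the Euler class factors of the BFN triples. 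Since $\CS^{\br}$ is defined as generated by these elements, the image of $\CS^{\br}$ lands inside $\CH_{\bd|\bk,\bell}$.

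\textbf{Step 3: surjectivity.} By the results of \cite{BFN} (see also \cite{Weekes}-type generation statements, i.e.\ the known generation of quantized Coulomb branches), $\CH_{\bd|\bk,\bell}$ is generated over $\ring$ by the Cartan part $H_{\GL_\bd}$ together with dressed minuscule monopole operators. Step 2 shows all of these are hit: the dressed monopoles are hit by $e_{\bn,g}$ and $f_{\bn,g}$, and the Cartan part is hit by the Cartan elements. In particular, $\ring$-integral surjectivity uses the full family $e_{\bn,g}$ with arbitrary symmetric $g$, rather than just $z_{i1}^k$.

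The main obstacle is Step 2: the exact identification of $\Psi(e_{\bn,g})$ with dressed minuscule monopole operators, including signs, $\hbar$-shifts, and framing factors. We would handle it by reducing to one vertex at a time, comparing fixed-point localization formulas term by term, and invoking Assumption \soft{} to ensure that the relevant weights are non-degenerate. This mirrors the trigonometric computation in \cite{JNCoulomb}.
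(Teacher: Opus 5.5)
Your plan is the paper's proof. It identifies the shifted double with $\CS^{\bd^\vee-\bk-\bell}$, applies the Finkelberg--Frassek--Tsymbaliuk homomorphism (Theorem \ref{thm:tsymbaliuk}, which the paper cites rather than re-verifies), uses Proposition \ref{prop:key} ($e_{\bn,g}\mapsto E_{\bn,g}$, $f_{\bn,g}\mapsto F_{\bn,g}$, and $0$ unless $\bn\le\bd$), and concludes from the generation of $\CH_{\bd|\bk,\bell}$ by dressed minuscule monopole operators.

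One correction to your sketch. On general shuffle elements the homomorphism is a sum over $I$-compositions of iterated residues along $\hbar$-strings, which produces powers $D_{ia}^{n}$ with $n\geq 2$; the single-shift formula you describe is not multiplicative on its own. The real reason Step 2 works is that the factor $\prod_{a,b}(z_{ib}-z_{ia}+\hbar)$ in $e_{\bn,g}$ cancels the denominator introduced by $\Xi$. Hence every residue term with a part of size $\geq 2$ vanishes, and only the single-shift monopole formula survives.
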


\medskip 

\noindent One can think of \eqref{eqn:main} as an explicit mathematical incarnation of the relationship between Higgs branches and Coulomb branches of quiver gauge theories (\cite{CL, RSYZ}), as the CoHA is the natural object which acts on the cohomology groups of Nakajima quiver varieties (algebro-geometric relatives of Higgs branches).

\medskip 

\noindent Note that we do not provide a direct geometric description of the so-called shifted double which appears in the domain of the map \eqref{eqn:main}. Instead, in Subsection \ref{sub:double} we use the isomorphism \eqref{eqn:intro coha} to define the shifted double as a particular subalgebra
$$
\CS^{\br} \subset \CS^{\br}_{\text{loc}}
$$
for any $\br \in \zz$. The right-most algebra admits a triangular decomposition
$$
\CS^{\br}_{\text{loc}} = \CS^+_{\text{loc}} \otimes (\text{Cartan subalgebra}) \otimes \CS^-_{\text{loc}}
$$
where the commutation relations between the algebras $\CS^+_{\text{loc}}$ and $\CS^-_{\text{loc}} = \CS^{+,\text{op}}_{\text{loc}}$ explicitly depend on the shift $\br$ (as defined originally in \cite{BK, KWWY}), see \eqref{eqn:rel double 2}-\eqref{eqn:def xi}.

\medskip 

\subsection{BPS Lie algebras and Kac polynomials} 
\label{sub:bps intro}

Consider the
$$
\ring_{\BQ} = \BQ[\text{Lie}(T)]\text{-algebra} \quad \kharat = \kha \otimes_{\BZ} \BQ 
$$
The well-known constructions of \cite{davison2020cohomological, davison2022integrality} apply to the loop-nilpotent setting, and give rise to the so-called BPS Lie (super)algebra
\begin{equation}
\label{eqn:bps intro}
\tbpsw \subset \kharat
\end{equation}
The importance of the BPS Lie algebra to the theory is to provide a PBW basis of the loop-nilpotent CoHA, in the sense of the existence of a $\ring_{\BQ}$-module isomorphism
$$
\kharat \cong \text{Sym} \left(\tbpsw[u] \right) 
$$
(see Subsection \ref{sub:cohomological integrality}). In \cite{jindalnegutbps}, we identified the image of the Lie subalgebra \eqref{eqn:bps intro} under the injective $\ring$-algebra homomorphism
$$
\kharat \hookrightarrow \CV^+_{\BQ} = \bigoplus_{\bn = (n_i)_{i \in I} \in \nn} \ring_{\BQ}[z_{i1},\dots,z_{in_i}]^{\sym}_{i \in I}
$$
with the set of $E(z_{i1},\dots,z_{in_i})_{i \in I} \in \CS^+_{\BQ} = \CS^+ \otimes_{\BZ} \BQ$ such that for any partition
\begin{equation}
\label{eqn:intro partition}
\bn = \bn^1 + \dots + \bn^k
\end{equation}
(for arbitrary $k \geq 1$ and $\bn^1,\dots,\bn^k \in \nn \backslash \b0$, where $\bn = (n_i)_{i \in I}$), the polynomial
\begin{equation}
\label{eqn:intro add y}
E(y_1+z_{i,1},\dots,y_1+z_{i,n_i^1}, \dots,y_k+z_{i,n_i-n_i^k+1}, \dots, y_k+z_{i,n_i})_{i \in I}
\end{equation}
satisfies the following degree bound (in $y_1,\dots,y_k$)
\begin{equation}
\label{eqn:intro degree bound}
\text{deg}_{y_1,\dots,y_k}(E) \leq \frac {1-k}2 - \sum_{1\leq a < b \leq k} (\bn^a,\bn^b)'
\end{equation}
(the right-hand side features the bilinear form of \eqref{eqn:modified euler form}). Moreover, we compare the objects above with their ``full'' counterparts, i.e. the cohomological Hall algebras defined without the loop-nilpotency condition. There is a commutative diagram
 \begin{equation}
\label{eqn:commutative intro}
\begin{tikzcd}
\tbpsw	 & \kharat \\
\tbps & \khrat
    \arrow[from=1-1, to=1-2]
	\arrow[from=1-1, to=2-1]
	\arrow[from=1-2, to=2-2]
	\arrow[from=2-1, to=2-2]
\end{tikzcd}
\end{equation}
with all maps injective, as long as the geometric genericity assumption \eqref{eqn:assumption geometric} holds. If it holds, then we prove in Subsection \ref{sub:support} that the left vertical map identifies
\begin{equation}
\label{eqn:hbar intro}
\tbpsw = \hbar \tbps
\end{equation}
which has the following important consequence concerning the graded dimensions of BPS Lie (super)algebras. It is well-known from \cite{davison2022integrality} and \cite{Mozgovoy} that
$$
A_Q(t^{-1}) = (1-t)^{r} \text{grdim}_{\BQ}(\tbps) 
$$
where the left-hand side is the (generating function of) Kac polynomial of the quiver $Q$, and the right-hand side features the $\nn \times \BZ$ graded dimension of the BPS Lie algebras. In other words, for all $\bn \in \nn$ the equation above stands for the equality
\[ 
A_{Q,\bn}(t^{-1}) = (1-t)^{r} \sum_{d \in \mathbb{Z}} \dim_{\BQ} \Big(\text{degree }d \text{ part of }\fg^T_{\tilde{Q},\tilde{W},\bn} \Big) t^{\frac d2}
\] 
The grading in the right-hand side will be defined in Section \ref{sec:coha} as an appropriate shift of the cohomological grading, and the quantity $(1-t)^{-r}$ is simply the graded dimension of $\ring_{\BQ} = \BQ[\text{Lie}(T)]$. In fact, BPS Lie algebras are well-known to be free over $\ring_{\BQ}$. If we define the analogous notions for the loop-nilpotent version
$$
A^{\omega\text{-nilp}}_Q(t^{-1}) := (1-t)^{r} \text{grdim}_{\BQ}(\tbpsw) 
$$
then \eqref{eqn:hbar intro} implies the following equation:
$$
A^{\omega\text{-nilp}}_Q(t^{-1}) = t A_Q(t^{-1})
$$
Combining this equality with the main result of \cite{jindalnegutbps} yields the following formula for the Kac polynomial of a quiver $Q$, which to the best of our knowledge is new.

\medskip 

\begin{corollary}
\label{cor:kac}

For any $\bn = (n_i)_{i \in I} \in \nn$, we have
\begin{multline}
\label{eqn:kac}
t^{(\bn,\bn)'+1} A_{Q,\bn}(t^{-1}) = \\ (1-t)^r \sum_{d \in \BZ} \dim_{\BQ} \left\{\begin{split} &\text{degree }d\text{ polynomials }E(z_{i1},\dots,z_{in_i})_{i \in I} \text{ as in} \\ &\text{Definition \ref{def:shuffle int}, that satisfy the bounds \eqref{eqn:intro add y}-\eqref{eqn:intro degree bound}} \end{split} \right\} t^{\frac d2}
\end{multline}
with the degree of polynomials determined by $\deg z_{ia} = \deg u_\alpha = \deg \hbar = 2$. \footnote{The overall shift by a power of $t$ in the LHS is due to the convention of an overall cohomological degree shift by $(\bn,\bn)^{\prime}$ in $\khan$.} 

\end{corollary}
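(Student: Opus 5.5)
The argument is a chain of graded-dimension identities, each supplied by a result already quoted in the introduction. I will take the degree-$\bn$ component throughout and keep track of the grading conventions at each step.

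\textbf{Step 1 (left-hand side).} By \cite{davison2022integrality} and \cite{Mozgovoy} we have
$$
A_{Q,\bn}(t^{-1}) = (1-t)^r \sum_d \dim_{\BQ}\big(\text{degree } d \text{ part of } \tbpsn\big)\, t^{d/2}.
$$
Under the geometric genericity assumption \eqref{eqn:assumption geometric}, the identification $\tbpsw = \hbar\, \tbps$ of \eqref{eqn:hbar intro} holds (proved in Subsection \ref{sub:support}). Multiplication by $\hbar$ is injective on $\tbps$, because BPS Lie algebras are free over $\ring_{\BQ}$. It also raises the degree by $2$, since $\deg \hbar = 2$, which multiplies the graded dimension by $t$. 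Hence
$$
(1-t)^r\, \text{grdim}\, \tbpswn = t\, A_{Q,\bn}(t^{-1}).
$$
The only bookkeeping here is to check that the cohomological shifts in the two CoHAs agree. Both are shifted by the same $(\bn,\bn)'$ coming from the virtual normalization, so no discrepancy arises beyond the factor $\hbar$.

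\textbf{Step 2 (shuffle description of the BPS algebra).} By the main result of \cite{jindalnegutbps}, recalled in Subsection \ref{sub:bps intro}, the injective map $\kharat \hookrightarrow \CV^+_{\BQ}$ sends $\tbpswn$ isomorphically onto a certain set of polynomials. That set consists of the $E \in \CS^+_{\BQ}$ of degree $\bn$ which satisfy the bounds \eqref{eqn:intro add y}--\eqref{eqn:intro degree bound} for every decomposition \eqref{eqn:intro partition}. By Theorem \ref{thm:intro coha}, $\CS^+$ is exactly the space of polynomials in Definition \ref{def:shuffle int}. So the right-hand side of \eqref{eqn:kac} counts, degree by degree, the dimension of the image of $\tbpswn$.

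\textbf{Step 3 (matching gradings).} The shuffle embedding sends a class in cohomological degree $c$ in $\khan$ to a polynomial of degree $c - 2(\bn,\bn)'$ or similar. The sign and exact form of the shift are fixed by the convention in the footnote: $\khan$ carries an overall shift by $(\bn,\bn)'$, while polynomial degree uses $\deg z_{ia} = \deg u_\alpha = \deg \hbar = 2$ and counts as $t^{d/2}$. Consequently the BPS graded dimension, computed via polynomial degree, differs from the one used in Step 1 by the factor $t^{(\bn,\bn)'}$. Combining this with the factor $t$ from Step 1 gives the stated left-hand side $t^{(\bn,\bn)'+1}A_{Q,\bn}(t^{-1})$.

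The main obstacle is this normalization check: confirming that the grading used in \cite{davison2022integrality} for the Kac polynomial identity is the one under which the shuffle degree shift is precisely $(\bn,\bn)'$, with the correct sign. Everything else follows formally from \eqref{eqn:hbar intro}, from the result of \cite{jindalnegutbps}, and from Theorem \ref{thm:intro coha}.
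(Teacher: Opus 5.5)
Your Steps 1 and 2 are the paper's argument in a slightly different order. The paper applies Theorem \ref{thm:bps} to $\tbps$ and then picks up the factor $t$ from $\tbpsw=\hbar\,\tbps$ (Proposition \ref{prop:bps}). You apply the factor $t$ first and describe the image of $\tbpsw$ directly. The two polynomial sets differ exactly by multiplication by $\hbar$, so this is the same proof.

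One small correction to Step 2. $\CS^+$ is the set of Definition \ref{def:shuffle int} by definition, so Theorem \ref{thm:intro coha} is not what tells you that. What you need is $\iota(\ekha)=\CS^+$ (Proposition \ref{prop:yu zhao}), together with strictness of $s$ and $\iota'$ for the perverse filtration. That combination gives $\iota'\circ s(\tbpsw)=P^1\cap\CS^+_{\BQ}$ in the proof of Theorem \ref{thm:bps}.

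The genuine gap is Step 3, and it cannot be filled as stated. First, your own tentative shift is inconsistent with your conclusion: $c\mapsto c-2(\bn,\bn)'$ would produce the factor $t^{-(\bn,\bn)'}$, the inverse of the one you then assert.

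Now do the bookkeeping. $\BZ^{\vir}$ is shifted by $\dim\mathfrak{M}_{\bn}(\tQ)=-(\bn,\bn)'$, so a class of cohomological degree $c$ goes to a polynomial of degree $d=c-(\bn,\bn)'$. This is also the normalization under which \eqref{eqn:mult}, which raises polynomial degree by $-2(\bn,\bn')'$, preserves cohomological degree. With the weight $t^{d/2}$ this gives a factor $t^{-(\bn,\bn)'/2}$, not $t^{(\bn,\bn)'}$. The two agree only when $(\bn,\bn)'=0$.

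A direct check confirms this, using the Jordan quiver, where $r=2$, $A_{Q,n}(q)=q$ and $(\bn,\bn)'=-2n^2$.
\begin{itemize}
\item For $n=1$, the polynomials in the corollary are $\hbar\,\ring_{\BQ}$. They are constant by the degree bound, and divisible by $\hbar$ since $\chi_{1,1}\equiv 0$. The right-hand side is therefore $t$.
\item For $n=2$, the polynomials are $\hbar^2u(\hbar-u)\,\ring_{\BQ}$: constant by the $k=1,2$ bounds, with the divisor coming from the partition $2=2$. The right-hand side is $t^4$.
\item The stated left-hand side $t^{(\bn,\bn)'+1}A_{Q,\bn}(t^{-1})$ gives $t^{-2}$ and $t^{-8}$ respectively.
\item The exponent $1-(\bn,\bn)'/2$ gives $t^2\cdot t^{-1}=t$ and $t^5\cdot t^{-1}=t^4$, matching both.
\end{itemize}

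The paper's own argument in Subsection \ref{sub:kac} simply asserts $t^{(\bn,\bn)'}\,\text{grdim}_{\BQ}(\tbpsn)=\text{grdim}_{\BQ}\{\ldots\}$ without derivation, so it has the same issue. The ``main obstacle'' you flagged is real. Carrying it out correctly yields the formula with $t^{1-(\bn,\bn)'/2}$ on the left, rather than confirming the stated $t^{(\bn,\bn)'+1}$. Everything else in your chain goes through.
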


\medskip 

\noindent Note that \eqref{eqn:kac} gives a formula for the \emph{coefficients} of Kac polynomials as dimensions of certain vector spaces of polynomials, as opposed from the original definition of the \emph{values} of Kac polynomials as counting absolutely indecomposable quiver representations over finite fields.

\medskip 

\subsection{Relation to other work}

There are several papers on similar topics that have recently been or are currently being written independently of the present one. In \cite{MW}, the authors define an injective analogue of the map \eqref{eqn:main}, with the codomain replaced by the (suitably defined) limit as $\bd \rightarrow \infty$ of the Coulomb branch algebras. Using this connection, they recover the Kac polynomial of the quiver via Hua's formula from the monopole formula for quantized Coulomb branches and prove the conjecture of \cite{NGen} using spherical generation for Coulomb branches. Combining our results with those of \cite{MW} provides an isomorphism between the loop-nilpotent CoHA and the strictly positive part of the $\bd \rightarrow \infty$ limit of Coulomb branch algebras (which \loccit interpret rigorously using zastava spaces).

\medskip 

In \cite{BT}, the authors define an action of the loop-nilpotent CoHA on the cohomology of quasimap moduli spaces, the latter being the natural modules for Coulomb branch algebras; this yields a geometric connection between the CoHA and Coulomb branches. They also extend the CoHA action to an action of shifted Yangian, in the sense of \cite{cao2026shiftedquantumgroupscritical}. In \cite[Section 2.3]{DHKSV}, the authors study an analogue of the loop-nilpotent locus in the very general setting of 2-Calabi-Yau categories, which in particular also applies to the quivers studied in the present paper. Finally, \cite{H} studies the nilpotent versions of the CoHA of preprojective algebra and proves Corollary \ref{nilCoHAcommute} in the particular case of the Jordan quiver, through different means from ours.

\medskip

\subsection{Acknowledgements} We would like to thank Tommaso Maria Botta, Ben Davison, Dinakar Muthiah, Tudor Pǎdurariu, Olivier Schiffmann, Alexander Tsymbaliuk and Alex Weekes for many interesting conversations. We gratefully acknowledge the support of the Swiss National Science Foundation grant 10005316.

\bigskip 

\section{Shuffle algebras}
\label{sec:shuffle}

\medskip 

\noindent We review the main results of \cite{JNCoulomb}, and translate them from the $K$-theoretic to the cohomological Hall algebra language. We prove the supercommutativity of the shuffle algebra at $\hbar = 0$. The parameters denoted by $q$ and $t_\alpha$ in \loccit are the exponentials of the parameters denoted by $\hbar$ and $u_\alpha$ in the present paper. 

\medskip

\subsection{Conventions}
\label{sub:conventions}

In the present paper, the set $\BN$ will contain 0. We recall that $Q$ is a quiver with vertex set $I$. We write $\b0 = (0,\dots,0) \in \nn$ and let $\bs^i \in \nn$ be the $I$-tuple with a single 1 at position $i$ and 0 everywhere else. The adjacency matrix of the quiver $Q$ induces the bilinear form
\begin{equation}
\label{eqn:euler form}
\zz \times \zz \xrightarrow{( \cdot, \cdot )} \BZ
\end{equation}
determined by
$$
( \bs^i, \bs^j ) = 2\delta_{ij} - \Big|\text{arrows }i \rightarrow j\Big| - \Big|\text{arrows }j \rightarrow i\Big|
$$
Given $\bd \in \zz$, we will write $\bd^\vee$ for the $I$-tuple of integers whose entries are
\begin{equation}
\label{eqn:vee}
d^\vee_i = 2d_i - \sum_{j \in I} d_j \left( \Big|\text{arrows }i \rightarrow j\Big| + \Big|\text{arrows }j \rightarrow i\Big| \right)
\end{equation}
so that for any $\bn \in \zz$ the usual dot product of $I$-tuples satisfies the equation
$$
\bd^\vee \cdot \bn = (\bd,\bn)
$$
For any $\bm = (m_i)_{i \in I}, \bn = (n_i)_{i \in I}$, we write $\bm \leq \bn$ if $m_i \leq n_i$ for all $i \in I$. Let
$$
|\bn| = \sum_{i \in I} n_i
$$
We will also encounter the modified version of the bilinear form \eqref{eqn:euler form}, namely
\begin{equation}
\label{eqn:modified euler form}
\zz \times \zz \xrightarrow{( \cdot, \cdot )'} \BZ
\end{equation}
determined by
$$
( \bs^i, \bs^j )' = - \Big|\text{arrows }i \rightarrow j\Big| - \Big|\text{arrows }j \rightarrow i\Big|
$$
which is actually the form defined in \cite{jindalnegutbps} in the case of tripled quivers.

\medskip 

\subsection{Parameters}
\label{sub:parameters}

Consider the quiver $Q$ of Subsection \ref{sub:conventions}, and let $\edge$ denote its set of arrows. We will work over a field $\field$ of characteristic 0 endowed with elements
\begin{equation}
\label{eqn:parameters}
\left\{  \hbar, u_{\alpha} \right\}_{\alpha \in \edge} \subset \field
\end{equation}
and define the subring 
$$
\ring \subset \field \quad \text{generated by }\left\{ \hbar, u_{\alpha} \right\}_{\alpha \in \edge}
$$ 
Consider the doubled quiver $\dQ$ with vertex set $I$, whose arrow set $\dedge$ consists of
\begin{equation}
\label{eqn:double parameters}
i \xleftrightharpoons[u_{\alpha}]{\hbar-u_{\alpha}} j
\end{equation}
for every arrow $i \rightarrow j$ in $\edge$. The symbols $u_\alpha, \hbar - u_\alpha$ above represent parameters in $\ring$ associated to the arrows in the double quiver. Alternatively, if we write $\dalpha : j \rightarrow i$ for the opposite arrow to any $\alpha : i \rightarrow j$ in $\dedge$, then the parameter of $\dalpha$ is defined as
\begin{equation}
\label{eqn:opposite parameters}
u_{\dalpha} = \hbar - u_{\alpha}
\end{equation}
In the present paper, we will adapt from \cite{JNCoulomb} the following genericity assumptions on the parameters \eqref{eqn:parameters}: 
\begin{equation}
\label{eqn:assumption hard}
\text{Assumption \hard}: \quad \hbar \neq 0, \text{ and}
\end{equation}
$$
\text{the equation } \sum_{\alpha \in \edge} ( x_{\alpha} u_{\alpha} + y_{\alpha} u_{\dalpha} ) = 0 \text{ has no non-trivial solution }x_\alpha, y_\alpha \in \BZ_{\geq 0}
$$
\begin{equation}
\label{eqn:assumption soft}
\text{Assumption \soft}: \quad \hbar \neq 0, \text{ and}
\end{equation}

\begin{itemize}[leftmargin=*]

\item $\left\{u_{\alpha} + \BZ_{< 0}\hbar, -u_{\beta} + \BZ_{> 0} \hbar\right\}_{i \xleftrightharpoons[\alpha]{\beta} j} \cap \left\{u_{\alpha} + \BZ_{\geq 0}\hbar, -u_{\beta} + \BZ_{\leq 0}\hbar \right\}_{i \xleftrightharpoons[\alpha]{\beta} j} = \varnothing$, $\forall i,j \in I$;

\medskip 

\item $x\left(u_\alpha +\BZ_{<0} \hbar \right) = y \left(u_\beta +\BZ_{\geq 0}\hbar \right) \text{ for }x,y 
\in \BZ \text{ implies }x = y = 0, \forall \alpha,\beta \in \edge_{\text{loop}}$ (above, $\edge_{\text{loop}} \subset \edge$ denotes the set of loops of $Q$)

\end{itemize}

\begin{equation}
\label{eqn:assumption geometric}
\text{Geometric assumption}: \quad \hbar \neq 0 \text{ and } u_{\alpha} \notin \BZ \hbar, \forall \alpha \in \Omega_{\text{loop}}
\end{equation}

\medskip 

\noindent In fact, the original setting for the conditions above is when $\hbar, u_\alpha$ denote the canonical images of a ring homomorphism
\begin{equation}
\label{eqn:geometric parameters}
\frac {\BZ[\hbar,u_\alpha]_{\alpha \in \edge}}{\Big(\text{various $\BZ$-linear combinations of }\hbar,u_\alpha\Big)} \rightarrow \BF 
\end{equation}
Indeed, this is the case for the geometric applications considered in the present paper, in which case $\BF$ will be the fraction field of the representation ring of an algebraic torus $T$, and $\hbar,u_\alpha$ are various equivariant parameters (i.e. linear functions on the Lie algebra of $T$). We will occasionally blur the lines between the general definition of an arbitrary field with parameters \eqref{eqn:parameters} and the particular case of \eqref{eqn:geometric parameters}, but we recommend to keep in mind the fact that all known applications of the results in the present paper pertain to the setting \eqref{eqn:geometric parameters}.

\medskip 

\subsection{Big shuffle algebras}
\label{sub:big shuffle}

Corresponding to a quiver $Q$ with parameters as in Subsection \ref{sub:parameters}, we associate the following rational function (the minus signs are used to keep with the conventions in \cite{jindalnegutbps})
\begin{equation}
\label{eqn:zeta}
\zeta_{ij}(x) = \left( \frac {x - \hbar}x \right)^{\delta_{ij}} \prod_{(\alpha : i \rightarrow j) \in \bar{\edge}} \left( - x - u_\alpha \right) \in \ring(x)
\end{equation}
This choice allows us to place associative algebra structures on
\begin{align*}
&\CV^+ = \bigoplus_{\bn = (n_i \geq 0)_{i \in I} \in \nn} \CV_{\bn}, \qquad \quad \ \CV_{\bn}= R[z_{i1},\dots,z_{in_i}]_{i \in I}^{\sym} \\
&\CV^+_{\text{loc}} = \bigoplus_{\bn = (n_i \geq 0)_{i \in I} \in \nn} \CV_{\text{loc},\bn}, \qquad \CV_{\text{loc},\bn}=\BF[z_{i1},\dots,z_{in_i}]_{i \in I}^{\sym}
\end{align*}
(above, ``sym'' refers to polynomials which are color-symmetric, i.e. symmetric in $z_{i1},\dots,z_{in_i}$, $\forall i \in I$). Explicitly, the multiplication on the sets above is given by
\begin{equation}
\label{eqn:mult}
E( z_{i1}, \dots, z_{i n_i})_{i \in I} * E'(z_{i1}, \dots,z_{i n'_i})_{i \in I} = 
\end{equation}
$$
\textrm{Sym} \left[ \frac {E(z_{i1}, \dots, z_{in_i}) E'(z_{i,n_i+1}, \dots, z_{i,n_i+n'_i})}{\bn! \bn'!}
\prod_{i,j \in I} \mathop{\prod_{1 \leq a \leq n_i}}_{n_j < b \leq n_j+n_j'} \zeta_{ij} \left( z_{ia} - z_{jb} \right) \right]
$$
with ``Sym'' in \eqref{eqn:mult} referring to summing over the
\begin{equation*}
(\bn+\bn')! := \prod_{i\in I} (n_i+n'_i)!
\end{equation*}
permutations of the variables $\{z_{i1}, \dots, z_{i,n_i+n'_i}\}$ for each $i$ independently. We call $\CV^+$ the \emph{big shuffle algebra} and $\CV^+_{\text{loc}}$ its localized version. Formula \eqref{eqn:mult} is a particular case of a Feigin-Odesskii shuffle product.

\medskip

\subsection{Small shuffle algebras}
\label{sub:small shuffle}

We now introduce certain subalgebras of $\CV^+$, $\CV^+_{\text{loc}}$.

\medskip 

\begin{definition} 
\label{def:shuffle loc}

Let $\CS^+_{\emph{loc}} \subset \CV^+_{\emph{loc}}$ denote the set of $E(z_{i1},\dots,z_{in_i})_{i \in I}$ such that
\begin{equation}
\label{eqn:divisibility loc}
E\Big|_{z_{i2} = z_{i1} + \hbar} \quad \text{ is divisible by } \prod_{(\alpha : i \rightarrow j) \in \bar{\edge}} (z_{jb} - z_{i1} - u_\alpha)
\end{equation}
for all $i,j \in I$ and any $b \geq 1$ (we require $b>2$ if $i=j$).

\end{definition} 

\medskip 

\noindent The set $\CS^+_{\text{loc}}$ is $\nn$ graded, with the components denoted by $\CS_{\text{loc},\bn}$. The following result was conjectured in \cite{jindalnegutbps}, and we will prove it in the present paper.

\medskip 

\begin{theorem}
\label{thm:shuffle loc}

Under Assumption \hard of \eqref{eqn:assumption hard}, the set $\CS^+_{\emph{loc}}$ coincides with the
\begin{equation}
\label{eqn:spherical}
\field\text{-algebra generated by }\left\{ e_{i,k} = z_{i1}^k \right\}_{i \in I, k \in \BN} \subset \CV^+_{\emph{loc}}
\end{equation}
where $z_{i1}^k$ is interpreted as a polynomial in a single variable of color $i$ (even if $k=0$).

\end{theorem}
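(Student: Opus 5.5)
Both inclusions need proof. For the easy one, we show that the subalgebra generated by the $e_{i,k}$ lies in $\CS^+_{\text{loc}}$. Each $e_{i,k}$ has a single variable of color $i$, so the divisibility condition \eqref{eqn:divisibility loc} is vacuous for it. It then suffices to show that $\CS^+_{\text{loc}}$ is closed under the shuffle product \eqref{eqn:mult}. We do this by the standard wheel-condition argument. Set $z_{i2}=z_{i1}+\hbar$ in a symmetrized summand of $E*E'$, and consider two cases.
\begin{itemize}
\item If $z_{i1},z_{i2}$ both lie among the variables of $E$ (respectively of $E'$), divisibility follows from the hypothesis on $E$ (respectively $E'$), together with the factors $\zeta$.
\item If they are split, with $z_{i1}$ in $E$ and $z_{i2}$ in $E'$, then the factor $\frac{x-\hbar}{x}$ in $\zeta_{ii}(z_{i1}-z_{i2})$ has numerator $z_{i1}-z_{i2}-\hbar$. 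This vanishes only in the reversed ordering. In the surviving ordering, the factors $\prod_{\alpha:i\to j}(-(z_{i\bullet}-z_{jb})-u_\alpha)$ coming from $\zeta_{ij}$ and $\zeta_{ji}$ (using $u_{\dalpha}=\hbar-u_\alpha$) supply exactly the linear forms $z_{jb}-z_{i1}-u_\alpha$.
\end{itemize}
The denominators $z_{ia}-z_{ib}$ cancel after symmetrization, so the product is again a polynomial. This is the cohomological translation of the corresponding statement in \cite{JNCoulomb}.

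For the hard inclusion $\CS^+_{\text{loc}}\subseteq\langle e_{i,k}\rangle$, we argue by a filtration and a dimension count in each graded piece $\CS_{\text{loc},\bn}$. We first use the result (quoted in the introduction, and proved over $\ring$ under Assumption \soft) that $\CS^+$ is generated by the elements $e_{\bn,g}$ of \eqref{eqn:special}. Tensoring with $\field$, it therefore suffices to show that every $e_{\bn,g}$ lies in the spherical subalgebra once scalars in $\field$ are allowed.

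To do this, we introduce the specialization maps $\phi_{\bn=\bn^1+\dots+\bn^k}$, which set the variables in each block to a $\hbar$-string $y_a, y_a+\hbar,\dots$ (the "wheel" evaluations). These define a filtration of $\CS^+_{\text{loc},\bn}$ indexed by dominance order on such decompositions. The key points are:
\begin{itemize}
\item the associated graded pieces embed into spaces of polynomials in the $y_a$ satisfying explicit divisibility constraints;
\item products $e_{\bn^1,g_1}*\cdots*e_{\bn^k,g_k}$ of lower-degree elements hit the leading piece, up to an explicit nonzero factor.
\end{itemize}
Assumption \hard ensures that these factors, which are products of linear forms $\sum x_\alpha u_\alpha+y_\alpha u_{\dalpha}$ and multiples of $\hbar$, are nonzero in $\field$, so we may divide by them. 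By induction on $|\bn|$ and on the filtration, we reduce to the case $\bn=\bs^i$ with a single variable, where $\CS_{\text{loc},\bs^i}=\field[z_{i1}]$ is spanned by the $e_{i,k}$.

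The main obstacle is the last step at the top of the filtration. Concretely, we need the commutators of $e_{i,k}$'s (for instance iterated $[e_{i,k},e_{i,l}]$ and the mixed-color analogues) to generate the "string" elements $e_{\bn,g}$ for a single color with all $g$ symmetric, modulo lower filtration terms. I would first try the case where $\bn$ is concentrated at one vertex $i$. There, the specialization at $z_{ia}=y+(a-1)\hbar$ identifies the leading piece with polynomials in $y$ times a fixed product $\prod(\ldots-u_\alpha)$ over loops. I would compute this leading term of $e_{i,k_1}*\cdots*e_{i,k_n}$ explicitly as a Vandermonde-type determinant in the $k$'s and show that its span covers everything, the needed nonvanishing being exactly Assumption \hard. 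The general $\bn$ then follows by the same filtration argument across colors.
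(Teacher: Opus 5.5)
\textbf{Verdict: there is a genuine gap in the hard inclusion.} Your easy inclusion is fine. The wheel conditions \eqref{eqn:divisibility loc} are preserved by the shuffle product, and your split-case bookkeeping with $u_{\bar{\alpha}}=\hbar-u_\alpha$ is correct.

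\textbf{First gap: passing to $\field$ is not formal.} Tensoring with $\field$ only shows that $\CS^+\otimes_{\ring}\field$ is generated by the $e_{\bn,g}$. To conclude the same for $\CS^+_{\text{loc}}$ you need $\CS^+\otimes_{\ring}\field=\CS^+_{\text{loc}}$. That is, every $E$ satisfying only the two-variable conditions \eqref{eqn:divisibility loc} must have an $\ring$-multiple such that $\text{Spec}_P$ of it is divisible by \eqref{eqn:factor}, for every $I$-partition $P$. This requires proof. The paper does it in three steps:
\begin{itemize}
\item it reduces to a single pair of blocks, using that the products \eqref{eqn:one factor} are coprime for distinct pairs;
\item it chains the wheel conditions along the longer string to obtain divisibility by \eqref{eqn:two factor};
\item it matches the exponents using the identity \eqref{eqn:explicit chi}.
\end{itemize}

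\textbf{Second gap, the decisive one: you never produce $e_{\bn,g}$ from the $e_{i,k}$.}
\begin{itemize}
\item $e_{\bn,g}$ is killed by every nontrivial specialization $\text{Spec}_P$, since the factor $z_{ib}-z_{ia}+\hbar$ vanishes on any string of length $\geq 2$. So it lies in the \emph{bottom} piece of your filtration. Its leading term at the one-string specialization $z_{ia}=y+(a-1)\hbar$ is zero, and ``modulo lower filtration terms'' has no content there.
\item The top piece is actually trivial to reach. In $e_{i,k_1}*\cdots*e_{i,k_n}$ only the identity ordering survives that specialization, giving a nonzero constant times $\prod_a (y+(a-1)\hbar)^{k_a}$. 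No Vandermonde argument is needed.
\item Intermediate pieces can be reached by products of lower-degree elements, which is the mechanism behind Claim \ref{claim}.
\item So the whole difficulty sits exactly where $e_{\bn,g}$ lives. You must show that some combination of spherical products has \emph{all} nontrivial specializations cancel and equals $g\prod_{a,b}(z_{ib}-z_{ia}+\hbar)$.
\item Induction on $|\bn|$ cannot reduce this to $\bn=\bs^i$, because $e_{\bn,g}$ is a genuinely new generator in every degree.
\end{itemize}

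\textbf{How the paper closes this gap.} It does not attack the bottom piece directly. It passes to the trigonometric shuffle algebra:
\begin{itemize}
\item It lifts $e_{\bn,g}$ to the trigonometric element $\widetilde{e}_{\bn,g}$, which satisfies the trigonometric wheel conditions.
\item It invokes the spherical generation theorem \cite[Theorem 1.2]{Wheel} to write $\widetilde{e}_{\bn,g}$ as a combination of symmetrized products of the $w_{i1}^k$.
\item It substitutes $w=e^z$ and extracts the lowest-degree component. This yields $e_{\bn,g}$ as a combination of rational shuffle products of monomials $z_{i1}^{\ell}$.
\end{itemize}
Your proposal needs either this input or a genuinely new argument, for example a PBW or pairing-type lower bound on the size of the spherical subalgebra. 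Without one of these it does not establish the hard inclusion.
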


\medskip 

\noindent The following construction is the cohomological version of that of \cite{JNCoulomb}, itself generalizing the situation of the Jordan quiver from \cite{Integral}). We will refer to
\begin{equation}
\label{eqn:composition}
P = \left\{ n_i = n_i^{(1)}+\dots+n_i^{(d_i)} \right\}_{i \in I}
\end{equation}
as an $I$-composition of $\bn = (n_i)_{i \in I} \in \nn$. If moreover
$$
n_i^{(1)} \geq \dots \geq n_i^{(d_i)}
$$
for all $i \in I$, then we call $P$ an $I$-partition.

\medskip 

\begin{definition} 
\label{def:shuffle int}

Let $\CS^+ \subset \CV^+$ denote the set of  polynomials $E(z_{i1},\dots,z_{in_i})_{i \in I}$ such that for any $I$-composition $P$ as in \eqref{eqn:composition}, the specialization
\begin{equation}
\label{eqn:specialization}
\emph{Spec}_P(E) = E \left(x_{ia},x_{ia}+ \hbar,\dots,x_{ia} + (n_i^{(a)}-1)\hbar\right)_{i \in I, a \in \{1,\dots,d_i\}}
\end{equation}
is divisible in the ring $\ring[x_{i1},\dots,x_{id_i}]^{\emph{sym}}_{i \in I}$ by
\begin{multline}
\label{eqn:factor}
\prod_{i \in I} \left( \hbar^{n_i} \prod_{a=1}^{d_i} n_i^{(a)}! \right) \prod_{(\alpha : i \rightarrow j) \in \edge} \mathop{\prod_{1 \leq a \leq d_i}}_{1 \leq b \leq d_j} \prod_{c \in \BZ} \left(x_{jb}-x_{ia}+c\hbar - u_\alpha\right)^{\chi_{n_i^{(a)},n_j^{(b)}}(c)}
\end{multline}
where for any $k,k' \geq 1$, we let $\chi_{k,k'}(c) = \mathrm{max}(0,\mathrm{min}(k, k', c+k-1, k'-c))$. By the color-symmetry of $E$, it is enough to restrict attention to $I$-partitions $P$ above.

\end{definition}

\medskip 

\noindent We call $\CS^+$ thus defined the \emph{integral shuffle algebra}. It is also graded by $\nn$, with the components denoted by $\CS_{\bn}$. The following result is completely analogous to \cite[Theorem 2.6]{JNCoulomb} (also \cite[Proposition 3.9]{Integral}), so we only sketch its proof.

\medskip 

\begin{theorem}
\label{thm:shuffle int}

Under the geometric assumption \eqref{eqn:assumption geometric}, the set $\CS^+$ coincides with the $\ring$-subalgebra of $\CV^+$ generated by
\begin{equation}
\label{eqn:special}
e_{\bn,g} = g(z_{i1},\dots,z_{in_i})_{i \in I} \prod_{i \in I} \prod_{1 \leq a , b \leq n_i} \left(z_{ib}-z_{ia}+\hbar\right)
\end{equation}
as $\bn = (n_i)_{i \in I}$ ranges over $\nn$ and $g$ ranges over $\ring[z_{i1},\dots,z_{in_i}]^{\emph{sym}}_{i \in I}$.

\end{theorem}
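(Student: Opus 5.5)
Two inclusions are needed: $\langle e_{\bn,g}\rangle \subseteq \CS^+$, and the converse $\CS^+ \subseteq \langle e_{\bn,g}\rangle$. We follow the template of \cite[Theorem 2.6]{JNCoulomb} and \cite[Proposition 3.9]{Integral}, replacing multiplicative shifts by additive ones.

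\textbf{Inclusion $\langle e_{\bn,g}\rangle \subseteq \CS^+$.} First we check that each $e_{\bn,g}$ lies in $\CS^+$. Specializing at an $I$-partition $P$, the factor $\prod_{a,b}(z_{ib}-z_{ia}+\hbar)$ contributes the $\hbar^{n_i}$ from the diagonal terms $a=b$. Inside each block $x_{ia},x_{ia}+\hbar,\dots$ it also contributes $\prod_a n_i^{(a)}!$ times powers of $\hbar$. The arrow factors in \eqref{eqn:factor} involve $u_\alpha$ and do not appear in $e_{\bn,g}$. However, for a single $e_{\bn,g}$ there is no pair of colors in the same factor, so these factors must be read with the convention that $\chi$ counts only what appears. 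Hence what has to be shown is that $\CS^+$ is closed under the shuffle product \eqref{eqn:mult}, together with the fact that single-variable-block elements lie in $\CS^+$.

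For closure under the product, we specialize $E*E'$ at $P$. Each summand of Sym corresponds to distributing each block $x_{ia}+[0,n_i^{(a)})\hbar$ into a part going to $E$ and a part going to $E'$. Summands that split a block in the "wrong order" vanish because of the factor $(x-\hbar)/x$ in $\zeta_{ii}$. The surviving summands split each block into an initial and a final segment. Such a summand is divisible by $\operatorname{Spec}_{P'}(E)\operatorname{Spec}_{P''}(E')$ times the $\zeta$ cross factors. The required multiplicities $\chi_{k,k'}(c)$ are subadditive in the right way: the $\chi$ exponent for the full blocks is at most the sum of the $\chi$ exponents for the pieces plus the linear factors $(-x-u_\alpha)$ coming from $\zeta_{ij}$ between the pieces. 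This is a combinatorial identity on counting pairs $(s,t)$ with $t-s=c$, and it handles the $u_\alpha$ factors. The factorial and $\hbar^{n_i}$ factors are subtler, since individual summands need not be divisible by $\prod n_i^{(a)}!$. Here we group summands by the sizes of the splits, and use the $\hbar$-divisibility from the diagonal terms $\zeta_{ii}$ together with the symmetrization identity that produces binomial coefficients $\binom{n_i^{(a)}}{k}$. This shows the combined sum is divisible by the factorials.

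\textbf{Inclusion $\CS^+ \subseteq \langle e_{\bn,g}\rangle$.} We argue by induction along a filtration, following \cite{Integral}. Put the dominance order on $I$-partitions $P$ of $\bn$. For $E\in\CS_{\bn}$, consider the maximal $P$ (coarsest first) at which $\operatorname{Spec}_P(E)\ne 0$. The quotient $\operatorname{Spec}_P(E)/(\text{factor }\eqref{eqn:factor})$ is a color-symmetric polynomial $h$ in the variables $x_{ia}$, with the variables of equal block sizes symmetric.

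We then show that the product $e_{P}(h) := \prod_{\text{blocks}} e_{n^{(a)}\bs^i,\, g_a}$ (suitably ordered and summed over monomials of $h$) has the same leading specialization. That is, $\operatorname{Spec}_P(e_P(h))$ equals exactly factor $\times h$, and $\operatorname{Spec}_{P'}(e_P(h))=0$ for all $P'$ strictly coarser than $P$. The vanishing follows because $\prod(z_{ib}-z_{ia}+\hbar)$ within each factor $e_{k\bs^i,g}$, combined with the $\zeta_{ii}$ factors, kills specializations that merge blocks. Subtracting $e_P(h)$ from $E$ lowers the leading partition, and we induct. The final step is the case where all specializations vanish: then $E$ vanishes at all $P$, in particular at the finest one, so $E=0$.

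\textbf{Main obstacle.} The hard step is the exact matching $\operatorname{Spec}_P(e_P(h)) = \text{factor}\cdot h$ \emph{integrally}, without extra constants. The geometric assumption \eqref{eqn:assumption geometric} enters here: the cross terms between blocks of colors $i,j$ connected by loops must not create spurious common factors $u_\alpha\in\BZ\hbar$. Such factors would make the division in $\ring$ fail, or the leading term vanish. For this step we would first compute $e_P(h)$ specialized, where only the identity-ordered summand survives. We would then identify its cross factors $\zeta_{ij}(x_{ia}+s\hbar-x_{jb}-t\hbar)$ with the $\chi$-exponents, checking $u_\alpha\notin\BZ\hbar$ to exclude cancellations.
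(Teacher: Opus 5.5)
\textbf{There is a genuine gap in the converse inclusion.} Your overall scheme matches the paper's: descending induction over $I$-partitions, subtracting elements of the generated subalgebra to kill the leading specialization, and ending at the finest partition where $\operatorname{Spec}_P=\mathrm{Id}$. But the leading-term construction fails, because of a miscomputation that already appears in your first paragraph.

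For $k\geq 2$, the generator $e_{k\bs^i,g}$ \emph{vanishes} under $\operatorname{Spec}_{(k)}$: the factor $z_{ib}-z_{ia}+\hbar$ with $z_{ia}=x+\hbar$, $z_{ib}=x$ is zero. So a block does not ``contribute $\prod_a n_i^{(a)}!$ times powers of $\hbar$''; it contributes $0$. This is exactly why $e_{\bn,g}\in\CS^+$ is trivial, which is the paper's argument. No convention on $\chi$ is needed: $\operatorname{Spec}_P(e_{\bn,g})=0$ unless all parts of $P$ are $1$, and there $\chi_{1,1}\equiv 0$.

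The same vanishing destroys your ansatz $\prod_{\text{blocks}} e_{n^{(a)}\bs^i,g_a}$. The ``identity-ordered summand'', which puts a whole block of size $\geq 2$ into one factor, is zero rather than factor${}\times h$. Take one vertex, no arrows, $n=2$. Then $e_{1,1}*e_{1,1}=2\hbar^2\in\CS_2$ has $\operatorname{Spec}_{(2)}=2\hbar^2\neq 0$, but your recipe for $P=(2)$ returns $e_{2,1}$, whose $\operatorname{Spec}_{(2)}$ is $0$.

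The correct ansatz is the transposed one. Each generator kills any two same-coloured variables differing by $\pm\hbar$, and $\zeta_{ii}$ kills placing $x+(r+1)\hbar$ in an earlier factor than $x+r\hbar$. Hence in a product of generators, every block must be spread one variable per factor over strictly increasing factors. One therefore uses $e_{\bm^1,g_1}*\cdots*e_{\bm^t,g_t}$ with $m^s_i=\#\{a : n_i^{(a)}\geq s\}$, i.e.\ the conjugate partitions. For this product, $\operatorname{Spec}_{P'}$ vanishes for all $P'>P$, since the blocks of $P'$ cannot be distributed in this way. Moreover $\operatorname{Spec}_P$ has essentially one surviving term, in which the $s$-th variable of every block goes to the $s$-th factor.

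\textbf{Even with the right products, your matching step is inaccurate.} $\operatorname{Spec}_P$ of such a product is not ``the factor \eqref{eqn:factor} times an arbitrary $h$''. It carries extra linear factors that encode vanishing at coarser partitions. These come from the $\zeta_{ii}$ numerators, from the cross terms of $\prod(z_{ib}-z_{ia}+\hbar)$ between different blocks inside one generator, and from the arrow factors. For example, for $n=2$ and $P=(1,1)$, the relevant elements are $\hbar^2(z_2-z_1+\hbar)(z_1-z_2+\hbar)g$, not $\hbar^2 h$ with $h$ arbitrary.

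The real content of the paper's Claim, proved as in \cite[Claim 5.2]{JNCoulomb}, has two parts:
\begin{itemize}
\item For $E\in\CS^+_P=\bigcap_{P'>P}\operatorname{Ker}\operatorname{Spec}_{P'}$, $\operatorname{Spec}_P(E)$ is divisible by this full product. This uses the vanishing of the coarser specializations together with coprimality of the shift factors $x_{ia}-x_{ib}+c\hbar$ with the loop factors $x_{ib}-x_{ia}+c\hbar-u_\alpha$. That coprimality is where $u_\alpha\notin\BZ\hbar$ enters.
\item The remaining quotients $\prod_s g_s(\cdots)$ span everything that is left.
\end{itemize}
Your proposal defines $h$ as $\operatorname{Spec}_P(E)$ divided by \eqref{eqn:factor} and never identifies these extra divisibilities, so the matching cannot go through as stated.

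A minor point on closure under the product: each surviving summand is already divisible by $\hbar^k k!$ per block. The in-block $\zeta_{ii}$ factors produce $\binom{k}{\ell}$, and $\ell!(k-\ell)!\binom{k}{\ell}=k!$, so no regrouping of summands is needed.
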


\medskip 

\begin{proof} \emph{(sketch):} Denote the $\ring$-subalgebra generated by the $e_{\bn,g}$'s by
\begin{equation}
\label{eqn:subalgebra generated by}
\bar{\CS}^+ \subset \CV^+
\end{equation}
It is easy to see that $e_{\bn,g} \in \CS^+$ for all $\bn,g$, since we have $\text{Spec}_P(e_{\bn,g}) = 0$ whenever $P$ is an $I$-partition with at least one part of size $\geq 2$. Since $\CS^+$ is an $\ring$-subalgebra of $\CV^+$ (this is proved as in the proof of Proposition \ref{prop:spec is comm} below), we conclude that
\begin{equation}
\label{eqn:star}
\bar{\CS}^+ \subseteq \CS^+
\end{equation}
To establish the opposite inclusion, we consider for any $I$-partition $P$ the set
$$
\CS^+_P = \bigcap_{P' > P} \text{Ker}(\text{Spec}_{P'}) \subseteq \CS^+
$$
where $P' \geq P$ means that the $i$-th constituent partition of $P'$ is lexicographically greater than or equal to the $i$-th constituent partition of $P$ for all $i \in I$.  

\medskip

\begin{claim}
\label{claim}

For any $I$-partition $P$ and any $E \in \CS^+_P$, there exists $E' \in \bar{\CS}^+ \cap \CS^+_P$ such that
\begin{equation}
\label{eqn:claim}
\emph{Spec}_P(E) = \emph{Spec}_P(E')
\end{equation}

\end{claim}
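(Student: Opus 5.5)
We construct $E'$ as a linear combination of products of generators $e_{\bn^{(a)},g}$, one product for each part of $P$, adapted to the partition $P$. The argument runs as in \cite[Theorem 2.6]{JNCoulomb} and \cite[Proposition 3.9]{Integral}.

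\textbf{Step 1: the shape of $\text{Spec}_P$ on $\CS^+_P$.} Write $P = \{n_i = n_i^{(1)}+\dots+n_i^{(d_i)}\}$. Group the parts of $P$ into blocks of equal shape: for each tuple $\bk = (k_i)_{i\in I}$, let $\bk$ correspond to the parts of size $k_i$ at vertex $i$ (possibly $k_i=0$). Consider products
$$
E' = e_{\bk^1,g_1} * \dots * e_{\bk^s,g_s},
$$
where the $\bk^t$ range over the columns of $P$ (so that $\sum_t \bk^t = \bn$ and the $i$-th partition is $(k^1_i \geq \dots)$ after transposition), and the $g_t$ are chosen freely.
\begin{itemize}
\item Since each $e_{\bk,g}$ vanishes under any specialization with a string of length $\geq 2$ inside a single factor, the only surviving terms in $\text{Spec}_{P'}(E')$ for $P' \geq P$ come from distributing each string $x, x+\hbar,\dots$ across distinct factors.
\item This forces $\text{Spec}_{P'}(E')=0$ for $P' > P$, so $E' \in \CS^+_P$.
\item When $P'=P$, it forces a unique distribution up to symmetry.
\end{itemize}
Hence $\text{Spec}_P(E')$ equals an explicit product: the factor \eqref{eqn:factor}, multiplied by a symmetrization of $\prod_t g_t$ evaluated at the $x$'s, and by certain ``extra'' $\zeta$-factors (the products of $(x_{jb}-x_{ia}+c\hbar)$ coming from the $\hbar$-shifts among equal-colored strings, and from the $(-x-u_\alpha)$ factors).

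\textbf{Step 2: the divisibility of $\text{Spec}_P(E)$.} For arbitrary $E \in \CS^+_P$, we show that $\text{Spec}_P(E)$ is divisible by that same universal product. Divisibility by \eqref{eqn:factor} holds by definition. The additional factors — the wheel-type vanishing $(x_{ib}-x_{ia}+c\hbar)$ between strings of the same color — should follow from the assumption $E \in \ker \text{Spec}_{P'}$ for all $P'>P$. The mechanism is as follows: setting $x_{ib} = x_{ia} + c\hbar$ for suitable $c$ concatenates two strings into a longer one, hence into a specialization at some $P' > P$, where $E$ vanishes. Tracking multiplicities requires a derivative argument, or an induction on $c$.

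\textbf{Step 3: conclusion.} The quotient $\text{Spec}_P(E)/(\text{universal product})$ is then a polynomial in the $x_{ia}$, symmetric under permuting equal-size parts of the same color. Such polynomials are spanned over $\ring$ by symmetrizations of products $\prod_t g_t$ with the $g_t$ symmetric. This holds because symmetric functions in blocks of variables indexed by the columns generate the invariants of the appropriate product of symmetric groups. Choosing the $g_t$ accordingly gives \eqref{eqn:claim}.

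\textbf{Main obstacle.} The hard part is Step 2 done \emph{integrally} over $\ring$: the multiplicities of the factors $(x_{jb}-x_{ia}+c\hbar-u_\alpha)$ and $(x_{ib}-x_{ia}+c\hbar)$ must match exactly, with no denominators. In particular, the factorials $n_i^{(a)}!$ and $\hbar^{n_i}$ arising from the symmetrization in \eqref{eqn:mult} must be matched precisely. The geometric assumption \eqref{eqn:assumption geometric} enters exactly here. It guarantees that the linear factors involving the loop parameters $u_\alpha$ are coprime to the $\hbar$-shift factors, so that divisibility by each factor separately implies divisibility by their product. I would first try to verify this factor by factor, treating the loop arrows separately.
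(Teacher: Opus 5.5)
Your outline follows the same strategy the paper imports from \cite[Claim 5.2]{JNCoulomb}: products of the $e_{\bk,g}$ indexed by the columns of $P$, a universal divisibility of $\text{Spec}_P$ on $\CS^+_P$, and a block-symmetric quotient. As written, however, the decisive computation is missing and two of your statements are inaccurate.

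\textbf{Step 1.} The vanishing of $e_{\bk,g}$ only kills terms in which a single factor receives two values differing by $\pm\hbar$. It does not prevent, say, $x$ and $x+2\hbar$ from sharing a factor. What forces each string into distinct factors is the $\frac{x-\hbar}{x}$ part of $\zeta_{ii}$, which kills any term placing $x+(r+1)\hbar$ in an earlier factor than $x+r\hbar$. Hence consecutive elements go to strictly increasing factors. A Gale--Ryser count then gives $\text{Spec}_{P'}(E')=0$ unless every $P'_i$ is dominated by $P_i$. For $P'=P$, the only surviving assignment puts $x_{ia}+(t-1)\hbar$ into column $t$. So no symmetrization appears: the $\prod_t \bk^t!$ surviving orderings cancel the normalization in \eqref{eqn:mult}, and
$\text{Spec}_P(E')=\pm U\prod_t g_t(\{x_{ia}+(t-1)\hbar\}_{i\in I,\, n_i^{(a)}\geq t})$, where $U=\text{Spec}_P(e_{\bk^1,1}*\dots*e_{\bk^s,1})$.

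\textbf{Step 2.} This is the heart of the claim, and you leave it open; it requires computing $U$ explicitly.
\begin{itemize}
\item The factors coming from $-x-u_\alpha$ are \emph{not} extra. Pairs in the same column contribute nothing, pairs with $r<s$ give $c=s-r$, and pairs with $r>s$ give $c=s+1-r$. By \eqref{eqn:explicit chi} they therefore reproduce \eqref{eqn:factor} exactly, constant loop factors included.
\item Within one string, $\prod_{0\leq r<s<k}\frac{s-r+1}{s-r}=k!$, and the diagonal of the $e$'s gives $\hbar^{n_i}$. So the factorials and powers of $\hbar$ match with coefficient $\pm1$.
\item This matters because any arrow factor beyond \eqref{eqn:factor} could never be produced by vanishing at larger partitions.
\item For two strings $a,b$ of color $i$ with lengths $k,k'$, the $\hbar$-shift factors telescope to $\prod(x_{ib}-x_{ia}+d\hbar)$ over $\max(1,k'-k+1)\leq d\leq k'$, times the same with $a,b$ swapped. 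Each occurs with multiplicity one.
\item These are exactly the hyperplanes on which the two strings merge, overlapping and not only end to end, into the lexicographically larger type $(k+d,k'-d)$.
\end{itemize}
Hence no derivative argument or induction on $c$ is needed. $\text{Spec}_P(E)$ vanishes simply on each such hyperplane. Since \eqref{eqn:factor} does not vanish identically there (this is where $u_\alpha\notin\BZ\hbar$ enters, as you say), the cofactor vanishes there too.

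\textbf{Step 3.} This step is fine, but the correct reason is different from the one you give. Over $\ring$, the algebra generated by symmetric functions of the nested sets $C_{i,t}=\{a: n_i^{(a)}\geq t\}$ equals the block-symmetric algebra. This holds because $e(C_{i,t}\setminus C_{i,t+1})$ is $e(C_{i,t})$ times the integral inverse series of $e(C_{i,t+1})$.
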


\medskip 

\noindent By descending induction with respect to the partial order on $I$-partitions defined above, Claim \ref{claim} allows us to recursively associate to any $E \in \CS^+$ an element $E' \in \bar{\CS}^+$ such that \eqref{eqn:claim} holds for all $I$-partitions $P$. When $P$ is the finest $I$-partition with all parts equal to 1 (which is minimal with respect to the partial order), we have that $\text{Spec}_P = \text{Id}$, which implies that $E = E'$. This proves the opposite inclusion to \eqref{eqn:star} and thus completes the proof of Theorem \ref{thm:shuffle int}. As for Claim \ref{claim}, it is proved exactly like \cite[Claim 5.2]{JNCoulomb}, since the linear factors involved in the divisibility condition of Definition \ref{def:shuffle int} are the ``rational'' counterparts of the ``trigonometric'' factors involved in the divisibility condition of \cite[Definition 2.5]{JNCoulomb}. \end{proof}

\medskip 

\begin{proof} \emph{of Theorem \ref{thm:shuffle loc}:} The natural identification
\begin{equation}
\label{eqn:map 0}
\CV^+ \otimes_{\ring} \field = \CV^+_{\text{loc}}
\end{equation}
induces a map
\begin{equation}
\label{eqn:map}
\CS^+ \otimes_{\ring} \field \rightarrow \CS^+_{\text{loc}}
\end{equation}
because over $\field$, the $P = 2\bs^i + \bs^j + \dots$ case of the divisibility condition by \eqref{eqn:factor} is precisely equivalent to \eqref{eqn:divisibility loc}. The map \eqref{eqn:map} is injective, due to the fact that its domain and codomain are subsets of the respective sides of equation \eqref{eqn:map 0}. To prove the surjectivity of the map \eqref{eqn:map}, we must establish the following statement: consider any $E$ satisfying \eqref{eqn:divisibility loc}; prove that some $\ring$-multiple of $E$ satisfies the conditions in Definition \ref{def:shuffle int}. Because of the freedom in choosing said multiple, we may always ensure divisibility by arbitrary elements of $\ring$. As for divisibility by the other factors in \eqref{eqn:factor}, we note that the products
\begin{multline}
\label{eqn:one factor}
\prod_{(\alpha : i \rightarrow j) \in \edge} \prod_{c \in \BZ} \left(x_{jb}-x_{ia}+c\hbar - u_\alpha\right)^{\chi_{n_i^{(a)},n_j^{(b)}}(c)} \\ \prod_{(\alpha : j \rightarrow i) \in \edge} \prod_{c \in \BZ} \left(x_{jb}-x_{ia}-c\hbar + u_\alpha\right)^{\chi_{n_j^{(b)},n_i^{(a)}}(c)}
\end{multline}
are coprime for various $(i,a) \neq (j,b)$, and thus it suffices to establish the fact that $\text{Spec}_P(E)$ is divisible by \eqref{eqn:one factor} for any fixed $(i,a) \neq (j,b)$. Assume without loss of generality that $n_i^{(a)} \geq n_j^{(b)}$. Then conditions \eqref{eqn:divisibility loc} for the pairs of variables 
$$
(z_{i1},z_{i2}), \ (z_{i2},z_{i3}), \ \dots, \ (z_{i,n_i^{(a)}-1},z_{in_i^{(a)}})
$$
imply that $\text{Spec}_P(E)$ is divisible by
\begin{equation}
\label{eqn:two factor}
\prod_{(\alpha : i \rightarrow j) \in \bar{\edge}} \prod_{r=0}^{n_i^{(a)}-2} \prod_{s=0}^{n_j^{(b)}-1} (x_{jb} - x_{ia} + (s-r)\hbar - u_\alpha)
\end{equation}
The fact that the products \eqref{eqn:one factor} and \eqref{eqn:two factor} are equal follows from the identity
\begin{equation}
\label{eqn:explicit chi}
\chi_{k,k'}(c) = \begin{cases} \Big| \{ (r,s) \in \{0,\dots,k-1\} \times \{0,\dots, k'-1\} \text{ s.t. } s-r = c  \} \Big| &\text{if }c > 0 \\ \Big| \{ (r,s) \in \{0,\dots,k-1\} \times \{0,\dots, k'-1\} \text{ s.t. } s+1-r = c \} \Big| &\text{if }c \leq 0 \end{cases}
\end{equation}
which was proved in \cite[Subsection 5.1]{JNCoulomb}. Having proved \eqref{eqn:map}, it follows from Theorem \ref{thm:shuffle int} that $\CS^+_{\text{loc}}$ is generated over $\field$ by the shuffle elements $e_{\bn,g}$ (note that the geometric assumption \eqref{eqn:assumption geometric} is weaker than assumption \hard of \eqref{eqn:assumption hard}). 

\medskip 

\noindent Therefore, it remains to prove that the elements $e_{\bn,g}$ of \eqref{eqn:special} can be written as sums of products of $e_{i,k}$'s with coefficients in $\field$. To this end, we will mimic the argument of \cite[Lemma 2.13]{NGen}: consider the trigonometric version of the shuffle algebra
\begin{equation}
\label{eqn:tilde v}
\widetilde{\CV}^+_{\text{loc}} = \bigoplus_{\bn = (n_i)_{i \in I} \in \nn} \field[w_{i1}^{\pm 1},\dots,w_{in_i}^{\pm 1}]^{\sym}_{i \in I}
\end{equation}
where we interpret $w_{ia}$ as $e^{z_{ia}}$ and define the shuffle product on $\widetilde{\CV}^+_{\text{loc}}$ using the trigonometric zeta function
$$
\widetilde{\zeta}_{ij}(y) = \left(\frac {1-ye^{-\hbar}}{1-y} \right)^{\delta_{ij}} \prod_{(\alpha : i \rightarrow j) \in \bar{\edge}} (1-ye^{u_\alpha})
$$
Consider any $(\bn,g)$ as in \eqref{eqn:special}; we may assume that $g$ is a color-symmetric polynomial only in the $z_{ia}$'s (with no dependence on $\hbar$ or $u_\alpha$). Then the shuffle element
$$
\widetilde{e}_{\bn,g} = g(1-w_{i1},\dots,1-w_{in_i})_{i \in I} \prod_{i \in I} \prod_{1 \leq a , b \leq n_i} \left(1- \frac {w_{ia}}{w_{ib}e^\hbar}\right) 
$$
lies in the subalgebra $\widetilde{\CS}^+_{\text{loc}} \subset \widetilde{\CV}^+_{\text{loc}}$ consisting of Laurent polynomials that satisfy the natural trigonometric analogues of the divisibility conditions \eqref{eqn:divisibility loc}. By \cite[Theorem 1.2]{Wheel}, the subalgebra $\widetilde{\CS}^+_{\text{loc}}$ is generated by the shuffle elements in one variable $w_{i1}^k$, as $i$ runs over $I$ and $k$ runs over $\BZ$. Therefore, we have the following identity
\begin{equation}
\label{eqn:eqn}
\widetilde{e}_{\bn,g} = \text{linear combination of Sym} \left[w_{i_1\bullet_1}^{k_1} \dots w_{i_n\bullet_n}^{k_n} \prod_{1 \leq a < b \leq n} \widetilde{\zeta}_{i_ai_b} \left(\frac {w_{i_a\bullet_a}}{w_{i_b\bullet_b}} \right) \right]
\end{equation}
for various $i_1,\dots,i_n \in I$, $k_1,\dots,k_n \in \BZ$, where $\bullet_1,\dots,\bullet_n$ are the minimal positive integers such that $\bullet_a < \bullet_b$ whenever $a<b$ and $i_a = i_b$. If we write $w_{i\bullet} = e^{z_{i\bullet}}$ in \eqref{eqn:eqn} and extract the coefficient of smallest degree in $z_{ia},\hbar,u_\alpha$, then we obtain
$$
e_{\bn,g} = \text{linear combination of Sym} \left[z_{i_1\bullet_1}^{\ell_1} \dots z_{i_n\bullet_n}^{\ell_n} \prod_{1 \leq a < b \leq n} \zeta_{i_ai_b} \left(z_{i_a\bullet_a} - z_{i_b\bullet_b} \right) \right]
$$
for various $i_1,\dots,i_n \in I$, $\ell_1,\dots,\ell_n \geq 0$, precisely what we needed to prove. \end{proof} 

\medskip 

\begin{remark} We note an imprecision in the proof of Theorem \ref{thm:shuffle loc}: the exponentials $e^{\hbar}, e^{u_\alpha}$ do not make sense for arbitrary elements $\hbar,u_\alpha$ in an arbitrary field, although they do make sense when these symbols arise from a ring homomorphism \eqref{eqn:geometric parameters}. Thus, our Theorem \ref{thm:shuffle loc} should only be assumed to be proved in the more restrictive latter case, which applies to the main geometric situation considered in Section \ref{sec:coha}.

\end{remark}

\medskip 

\subsection{Specialization}
\label{sub:specialization}

Throughout the present Subsection, we assume that there exists a ring $\bar{\ring}$ and a homomorphism
$$
\ring \rightarrow \bar{\ring} \quad \text{that sends} \quad \hbar \mapsto 0.
$$
This allows us to consider the $\bar{\ring}$-algebra
$$
\bar{\CS}^+ = \CS^+ \otimes_{\ring} \bar{\ring}
$$
Note that just like $\CS$, the algebra $\bar{\CS}^+$ is graded by $\nn$ via the number of variables.

\medskip 

\begin{proposition}
\label{prop:spec is comm}

The ring $\bar{\CS}^+$ is supercommutative, i.e.
$$
\bar{E} * \bar{E'} = (-1)^{(\bn,\bn')'}\bar{E}' * \bar{E}
$$
for all $\bar{E} \in \bar{\CS}_{\bn}$ and $\bar{E}' \in \bar{\CS}_{\bn'}$, see \eqref{eqn:modified euler form} for the definition of the bilinear form.

\end{proposition}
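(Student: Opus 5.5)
The plan is to reduce everything to a statement over $\ring$: for $E \in \CS_{\bn}$ and $E' \in \CS_{\bn'}$ we want
$$
D = E * E' - (-1)^{(\bn,\bn')'} E' * E
$$
to lie in $\hbar \CS^+$, or at least in $\text{Ker}(\ring \rightarrow \bar{\ring}) \cdot \CS^+$. This is exactly the vanishing of $\bar{E} * \bar{E}' - (-1)^{(\bn,\bn')'}\bar{E}' * \bar{E}$ in $\bar{\CS}^+$. By bilinearity and the (super) Leibniz rule for supercommutators, it suffices to check this on a set of algebra generators. So I would first invoke Theorem \ref{thm:shuffle int} and reduce to $E = e_{\bn,g}$, $E' = e_{\bn',g'}$. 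A preliminary step, needed anyway since it is used in the sketch of Theorem \ref{thm:shuffle int}, is to verify that $\CS^+$ is closed under $*$. I would do this by specializing \eqref{eqn:mult} at an $I$-partition $P$. Only those summands survive in which each string $x_{ia}, x_{ia}+\hbar, \dots$ is split into an initial segment in $E$ and a final segment in $E'$, because the factors $(z_{ia}-z_{ib}-\hbar)$ in $\zeta_{ii}$ kill the other orderings. One then counts the linear factors coming from $\text{Spec}(E)$, $\text{Spec}(E')$ and the $\zeta$'s, using \eqref{eqn:explicit chi}.

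The sign comes from comparing the kernels. For $i \neq j$, the factor of $\zeta_{ij}(x)$ attached to $\alpha : i \to j$ is $-(x+u_\alpha)$. The factor of $\zeta_{ji}(-x)$ attached to $\dalpha$ is $(x + u_\alpha - \hbar)$. These agree up to the sign $-1$ modulo $\hbar$, and the same holds with the roles of $\alpha$ and $\dalpha$ swapped. Hence $\zeta_{ij}(x) \equiv (-1)^{-(\bs^i,\bs^j)'}\zeta_{ji}(-x)$ modulo $\hbar$. Multiplying over all cross pairs of variables gives the sign $(-1)^{(\bn,\bn')'}$, since loops contribute an even exponent. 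For $i=j$ there is the additional ratio $\frac{x-\hbar}{x}\big/\frac{x+\hbar}{x}$, which is $1$ modulo $\hbar$ but is a rational function. Consequently $E*E'$ and $(-1)^{(\bn,\bn')'}E'*E$ are both given by a single $\text{Sym}$ of the same numerator, multiplied by kernels that differ by terms of order $\hbar$ divided by products of $(z_{ia}-z_{ib})$. This shows $D \in \hbar \cdot \CV^+$, but not yet in $\hbar\CS^+$.

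The main obstacle is to show that $D/\hbar$ still satisfies the divisibility conditions of Definition \ref{def:shuffle int}. The factor \eqref{eqn:factor} is coprime to $\hbar$ except for $\hbar^{|\bn|+|\bn'|}\prod n_i^{(a)}!$. Hence it suffices to show that $\text{Spec}_P(D)$ is divisible by the non-$\hbar$ factors (already true, since $D \in \CS^+$) and additionally by $\hbar^{|\bn|+|\bn'|+1}\prod n_i^{(a)}!$. For generators $e_{\bn,g}$ one has $e_{\bn,g} = \hbar^{|\bn|}\cdot(\ldots)$, coming from the diagonal terms $a=b$ in \eqref{eqn:special}. So I would compute $\text{Spec}_P(e_{\bn,g}*e_{\bn',g'})$ explicitly. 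As noted above, only shuffles compatible with the strings contribute, and for generators each string must in fact lie in a single factor, since $\text{Spec}_P(e_{\bn,g})=0$ for partitions with parts of size at least 2. I would then compare it term by term with the reversed product. In each surviving term the two kernels differ by a factor $\prod \frac{x-\hbar}{x+\hbar}$ evaluated at differences $x$ that are nonzero after specialization, so the difference contains one extra power of $\hbar$. The factorials arise as in the proof of Claim \ref{claim}. If this direct count turns out too messy, my fallback is to deduce the statement from the trigonometric analogue in \cite{JNCoulomb} by taking lowest-degree terms, as was done in the proof of Theorem \ref{thm:shuffle loc}.
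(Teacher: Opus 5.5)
Your sign bookkeeping and the step $D \in \hbar\CV^+$ are fine. The step that is supposed to give the extra power of $\hbar$ after specialization, however, rests on a false description of the surviving terms.

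\textbf{The false claim.} You assert that for generators ``each string must lie in a single factor.'' This is wrong. In $e_{\bn,g}*e_{\bn',g'}$, a string $\{x,x+\hbar\}$ of length $2$ survives when $x$ is plugged into $e_{\bn,g}$ and $x+\hbar$ into $e_{\bn',g'}$. Only parts of size $\geq 3$ are killed. Already for one vertex without arrows,
$\text{Spec}_{(2)}(e_{1,g}*e_{1,g'}) = 2\hbar^2 g(x)g'(x+\hbar) \neq 0$.

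\textbf{Why your mechanism fails.} The surviving terms of $E*E'$ and of $E'*E$ are \emph{different} assignments. In $E*E'$, $E$ occupies the beginning of each string; in $E'*E$ it occupies the end. If you compare the same assignment in both products, the kernels are $\zeta_{ii}(-\hbar)=2$ versus $\zeta_{ii}(\hbar)=0$. Moreover, the relevant differences lie in $\hbar\BZ$, so they are not ``nonzero after specialization.'' No extra $\hbar$ appears this way.

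\textbf{What actually happens.} In the example, $\text{Spec}_{(2)}(D) = 2\hbar^2\big(g(x)g'(x+\hbar)-g(x+\hbar)g'(x)\big)$. The extra $\hbar$ comes from pairing two terms: the one where $E$ sits at $x$ and $E'$ at $x+\hbar$, and the one where $E'$ sits at $x$ and $E$ at $x+\hbar$. Their numerators agree modulo $\hbar$ because the arguments differ by multiples of $\hbar$. This shift-pairing is the missing idea.

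\textbf{How the paper uses it.} The pairing is the heart of the paper's proof, and it works for arbitrary $E,E'\in\CS^+$, not just generators. The steps are:
\begin{enumerate}
\item Specialize at a composition of $\bn+\bn'$ and fix the splitting $\ell_i^{(a)}$ of each string.
\item Match the term of $E*E'$ in which $E$ takes initial segments with the term of $E'*E$ in which $E$ takes final segments.
\item Use the divisibility of $\text{Spec}_P(E)$ and $\text{Spec}_{P'}(E')$ to write both as the required divisor times a cofactor.
\item Check that the cofactors agree modulo $\hbar$.
\end{enumerate}
With arrows, the last step is not automatic. Beyond the shifted arguments of $G,G'$, the leftover cross factors \eqref{eqn:aa} and \eqref{eqn:bb} are different products of linear forms. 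One must check, via the $\chi$-identities, that both paired terms contain the full divisor and that these leftovers agree modulo $\hbar$.

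\textbf{Consequences for your outline.} The closure computation you already plan to carry out is exactly the setting of this comparison. So the reduction to generators buys nothing, and it costs you the geometric assumption needed for Theorem \ref{thm:shuffle int}, which the Proposition does not assume. Your trigonometric fallback does not close the gap either. Extracting lowest-degree terms transports identities between shuffle elements, not membership in the integral form $\hbar\CS^+$.
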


\medskip 

\begin{proof} It is enough to prove that for all $E(z_{i1},\dots,z_{in_i})_{i \in I} \in \CS_{\bn}$ and $E'(z_{i1},\dots,z_{in_i'})_{i \in I} \in \CS_{\bn'}$, we have
\begin{equation}
\label{eqn:hbar comm}
(-1)^{\sum_{(\alpha:i\rightarrow j) \in \edge} -n_jn'_i} E*E' - (-1)^{\sum_{(\alpha:i\rightarrow j) \in \edge} n_in'_j} E'*E = \hbar \tilde{E}
\end{equation}
where $\tilde{E}$ satisfies the divisibility properties in Definition \ref{def:shuffle int}. Explicitly, we have 
$$
\text{LHS of \eqref{eqn:hbar comm}} = \text{symmetrization of }A-B
$$
where
\begin{align*}
&A=E(z_{i1},\dots,z_{in_i})_{i \in I} E'(z_{i,n_i+1},\dots,z_{i,n_i+n_i'})_{i \in I}  \prod_{i,j \in I} \prod_{a=1}^{n_i} \prod_{b={n_j+1}}^{n_j+n_j'} (-1)^{|\alpha:j\rightarrow i|} \zeta_{ij} \left( z_{ia} - z_{jb}  \right) \\  
&B=E(z_{i1},\dots,z_{in_i})_{i \in I} E'(z_{i,n_i+1},\dots,z_{i,n_i+n_i'})_{i \in I}  \prod_{i,j \in I} \prod_{a=1}^{n_i} \prod_{b={n_j+1}}^{n_j+n_j'}  (-1)^{|\alpha:i\rightarrow j|}  \zeta_{ji} \left( z_{jb} - z_{ia}  \right)   
\end{align*} 
We now consider any $I$-composition
$$
Q = \left\{n_i+n_i' = k_i^{(1)} + \dots + k_i^{(d_i)} \right\}_{i \in I}
$$
and specialize the symmetrizations of $A$ and $B$ at
$$
\Big\{z_{i1},\dots,z_{i,n_i+n_i'}\Big\}_{i \in I} = \left\{x_{ia},x_{ia}+\hbar,\dots,x_{ia}+ (k_i^{(a)}-1)\hbar \right\}_{i \in I, a \in \{1,\dots,d_i\}}
$$
Because $\zeta_{ii}(\hbar) = 0$, the only non-zero specializations of $A$ are those for which
\begin{align*} 
&x_{ia},\dots,x_{ia}+ (\ell_i^{(a)}-1)\hbar \qquad \qquad \text{are plugged into the variables of }E \\
&x_{ia}+ \ell_i^{(a)}\hbar,\dots,x_{ia} + (k_i^{(a)}-1)\hbar \ \text{ are plugged into the variables of }E'
\end{align*}
and the only non-zero specializations of $B$ are those for which
\begin{align*} 
&x_{ia}+(k_i^{(a)}-\ell_i^{(a)})\hbar,\dots,x_{ia}+ (k_i^{(a)}-1)\hbar \quad \text{are plugged into the variables of }E \\
&x_{ia},\dots,x_{ia}+ (k_i^{(a)}-\ell_i^{(a)}-1)\hbar \qquad \qquad \quad \text{are plugged into the variables of }E'
\end{align*}
for some henceforth fixed $\{0 \leq \ell_i^{(a)} \leq k_i^{(a)}\}_{i,a}$. In the following discussion, we fix the $\ell_i^{(a)}$ and consider the $I$-compositions
\begin{align*}
&P = \left\{n_i = \ell_i^{(1)} + \dots + \ell_i^{(d_i)} \right\}_{i \in I} \\
&P' = \left\{n_i' = (k_i^{(1)} - \ell_i^{(1)}) + \dots + (k_i^{(d_i)} - \ell_i^{(d_i)}) \right\}_{i \in I}
\end{align*}
By the assumption that $E,E' \in \CS^+$, we have
\begin{multline}
\label{eqn:1}
\text{Spec}_P(E) = G(x_{i1},\dots,x_{id_i})_{i \in I} \prod_{i \in I} \left(\hbar^{\ell_i^{(a)}} \prod_{a=1}^{d_i} \ell_i^{(a)}!\right) \\ \prod_{(\alpha : i \rightarrow j) \in \edge} \mathop{\prod_{1 \leq a \leq d_i}}_{1 \leq b \leq d_j} \prod_{c \in \BZ} \left(x_{jb} - x_{ia} + c\hbar - u_\alpha\right)^{\chi_{\ell_i^{(a)},\ell_j^{(b)}}(c)} 
\end{multline}
\begin{multline}
\label{eqn:2}
\text{Spec}_{P'}(E') = G'(x_{i1},\dots,x_{id_i})_{i \in I} \prod_{i \in I} \left(\hbar^{k_i^{(a)} - \ell_i^{(a)}} \prod_{a=1}^{d_i} (k_i^{(a)} - \ell_i^{(a)})!\right) \\ \prod_{(\alpha : i \rightarrow j) \in \edge} \mathop{\prod_{1 \leq a \leq d_i}}_{1 \leq b \leq d_j} \prod_{c \in \BZ} \left(x_{jb} - x_{ia} + c\hbar - u_\alpha\right)^{\chi_{k_i^{(a)} -\ell_i^{(a)},k_j^{(b)} - \ell_j^{(b)}}(c)} 
\end{multline}
for polynomials $G,G'$ with coefficients in $\ring$. Therefore, we have
\begin{equation}
\label{eqn:a}
\text{Spec}_Q(A) = G(x_{ia})_{i\in I, a\in \{1,\dots,d_i\}} G'(x_{ia}+\ell_i^{(a)}\hbar)_{i\in I, a\in \{1,\dots,d_i\}}
\end{equation}
$$
\prod_{i \in I} \left[ \prod_{a=1}^{d_i} \left( \hbar^{\ell_i^{(a)}}\ell_i^{(a)}!\right)  \prod_{a=1}^{d_i} \left(\hbar^{k_i^{(a)}-\ell_i^{(a)}}(k_i^{(a)} - \ell_i^{(a)})!\right) \prod_{a=1}^{d_i} \prod_{r=0}^{(\ell_i^{(a)}-1)} \prod_{s=\ell_i^{(a)}}^{(k_i^{(a)}-1)} \frac {r-s-1}{r-s} \right] \mathop{\prod_{1 \leq a \leq d_i}^{(\alpha : i \rightarrow j) \in \edge}}_{1 \leq b \leq d_j} 
$$
$$
\left\{\prod_{c \in \BZ} \left[ \left(x_{jb} - x_{ia} + c\hbar - u_\alpha\right)^{\chi_{\ell_i^{(a)},\ell_j^{(b)}}(c)} \left(x_{jb} - x_{ia} + c\hbar - u_\alpha\right)^{\chi_{k_i^{(a)} -\ell_i^{(a)},k_j^{(b)} - \ell_j^{(b)}}(c+\ell_i^{(a)}-\ell_j^{(b)})} \right] \right.
$$
$$
\left. \prod_{r=0}^{\ell_i^{(a)}-1} \prod_{s=\ell_j^{(b)}}^{k_j^{(b)}-1}  (x_{jb}-x_{ia}+(s-r)\hbar - u_\alpha) \prod_{r=\ell_i^{(a)}}^{k_i^{(a)}-1} \prod_{s=0}^{\ell_j^{(b)}-1} (x_{jb}-x_{ia}+(s+1-r)\hbar - u_\alpha) \right\}
$$
and
\begin{equation}
\label{eqn:b}
\text{Spec}_Q(B) = G(x_{ia}+(k_i^{(a)}-\ell_i^{(a)})\hbar)_{i\in I, a\in \{1,\dots,d_i\}} G'(x_{ia})_{i\in I, a\in \{1,\dots,d_i\}}
\end{equation}
$$
\prod_{i \in I} \left[\prod_{a=1}^{d_i} \left( \hbar^{\ell_i^{(a)}}\ell_i^{(a)}!\right)  \prod_{a=1}^{d_i} \left(\hbar^{k_i^{(a)}-\ell_i^{(a)}}(k_i^{(a)} - \ell_i^{(a)})!\right)\prod_{a=1}^{d_i} \prod_{r=k_i^{(a)}-\ell_i^{(a)}}^{(k_i^{(a)}-1)} \prod_{s=0}^{(k_i^{(a)}-\ell_i^{(a)}-1)} \frac {s-r-1}{s-r} \right] \mathop{\prod_{1 \leq a \leq d_i}^{(\alpha : i \rightarrow j) \in \edge}}_{1 \leq b \leq d_j} 
$$
$$
\left\{ \prod_{c \in \BZ} \left[ \left(x_{jb} - x_{ia} + c\hbar - u_\alpha\right)^{\chi_{\ell_i^{(a)},\ell_j^{(b)}}(c+k_i^{(a)}-\ell_i^{(a)}-k_j^{(b)}+\ell_j^{(b)})}  \left(x_{jb} - x_{ia} + c\hbar - u_\alpha\right)^{\chi_{k_i^{(a)} -\ell_i^{(a)},k_j^{(b)} - \ell_j^{(b)}}(c)} \right] \right.
$$
$$
\left. \prod_{r=0}^{k_i^{(a)}-\ell_i^{(a)}-1} \prod_{s=k_j^{(b)}-\ell_j^{(b)}}^{k_j^{(b)}-1} (x_{jb}-x_{ia}+(s-r)\hbar - u_\alpha) \prod_{r=k_i^{(a)}-\ell_i^{(a)}}^{k_i^{(a)}-1} \prod_{s=0}^{k_j^{(b)}-\ell_j^{(b)}-1} (x_{jb}-x_{ia}+(s+1-r)\hbar - u_\alpha)  \right\}
$$
Comparing the formulas above for $\text{Spec}_Q(A)$ and $\text{Spec}_Q(B)$, we note the following

\medskip 

\begin{itemize}[leftmargin=*]

\item the first rows of \eqref{eqn:a} and \eqref{eqn:b} are equal to each other modulo $\hbar$

\medskip 

\item the second rows of \eqref{eqn:a} and \eqref{eqn:b} are both equal to $\prod_{i \in I} \prod_{a=1}^{d_i} \left( \hbar^{k_i^{(a)}}k_i^{(a)}! \right)$

\medskip 

\item the third and fourth rows of \eqref{eqn:a} multiply out to $\prod_{c \in \BZ} \left(x_{jb} - x_{ia} + c\hbar - u_\alpha\right)^{\chi_{k_i^{(a)},k_j^{(b)}}(c)}$ times the quantity
\begin{multline}
\label{eqn:aa}
\prod_{\max(0,k_j^{(b)}-k_i^{(a)})<c\leq \ell_j^{(b)}-\ell_i^{(a)}} (x_{jb}-x_{ia} + c\hbar - u_\alpha) \\ \prod_{\min(0,k_j^{(b)}-k_i^{(a)})\geq c > \ell_j^{(b)}-\ell_i^{(a)}} (x_{jb}-x_{ia} + c\hbar - u_\alpha)
\end{multline}

\medskip 

\item the third and fourth rows of \eqref{eqn:b} multiply out to $\prod_{c \in \BZ} \left(x_{jb} - x_{ia} + c\hbar - u_\alpha\right)^{\chi_{k_i^{(a)},k_j^{(b)}}(c)}$ times the quantity
\begin{multline}
\label{eqn:bb}
\prod_{\max(0,k_j^{(b)}-k_i^{(a)})<c\leq k_j^{(b)}-\ell_j^{(b)}-k_i^{(a)}+\ell_i^{(a)}} (x_{jb}-x_{ia} + c\hbar - u_\alpha) \\ \prod_{\min(0,k_j^{(b)}-k_i^{(a)})\geq c > k_j^{(b)}-\ell_j^{(b)}-k_i^{(a)}+\ell_i^{(a)}} (x_{jb}-x_{ia} + c\hbar - u_\alpha)
\end{multline}

\end{itemize}

\medskip 

\noindent (the statements in the latter two bullets are due to combinatorial identities that were proved at the end of the proof of \cite[Proposition 5.1]{JNCoulomb}). While formulas \eqref{eqn:aa} and \eqref{eqn:bb} are not equal on the nose, they are equal modulo $\hbar$. Therefore, the four bullets above imply that $\text{Spec}_Q(A-B)$ is a multiple of 
\begin{equation}
\label{eqn:final}
\prod_{i \in I} \prod_{a=1}^{d_i} \left( \hbar^{k_i^{(a)}}k_i^{(a)}! \right) \prod_{c \in \BZ} \left(x_{jb} - x_{ia} + c\hbar - u_\alpha\right)^{\chi_{k_i^{(a)},k_j^{(b)}}(c)}
\end{equation}
times a polynomial in $\ring[x_{i1},\dots,x_{id_i}]_{i \in I}$ which is a multiple of $\hbar$. Plugging this into equation \eqref{eqn:hbar comm} implies that $\text{Spec}_Q(\tilde{E})$ is a multiple of \eqref{eqn:final}. According to Definition \ref{def:shuffle int}, this precisely establishes that $\tilde{E} \in \CS^+$, as required. \end{proof}

\medskip

\subsection{The negative shuffle algebras}

The (big) shuffle algebras with superscript $-$ will refer to the opposites of the ones introduced in the previous subsections:
\begin{align*}
&\CV^- = \CV^{+,\text{op}}, \qquad \CV^-_{\text{loc}} = \CV^{+,\text{op}}_{\text{loc}} \\
&\CS^- = \CS^{+,\text{op}}, \qquad \CS^-_{\text{loc}} = \CS^{+,\text{op}}_{\text{loc}}
\end{align*}
We will denote the analogous elements to \eqref{eqn:spherical} as $f_{i,k} \in \CV^-_{\text{loc}}$ and those of \eqref{eqn:special} as
\begin{equation}
\label{eqn:special transpose}
f_{\bn,g} \in \CV^-
\end{equation}
They are given by the exact same formulas as their $e$ counterparts, but are viewed as elements in the opposite shuffle algebras $\CV^-_{\text{loc}}$ and $\CV^-$, respectively. We define the horizontal grading on shuffle algebras (valued in $\zz$) as
\begin{equation}
\label{eqn:deg h}
\hdeg E = \bn \quad \text{and} \quad \hdeg F = -\bn
\end{equation}
for any $E(z_{i1},\dots,z_{in_i})_{i \in I} \in \CV^+_{\text{loc}}$ and $F(z_{i1},\dots,z_{in_i})_{i \in I} \in \CV^-_{\text{loc}}$. The following proposition is straightforward, and we leave it as an exercise to the reader.

\medskip 

\begin{proposition}
\label{prop:o}

Let us consider the following rational functions in place of \eqref{eqn:zeta}
\begin{equation}
\label{eqn:ozeta}
\ozeta_{ij}(x) = \left( \frac x{x+\hbar} \right)^{\delta_{ij}} \prod_{(\alpha : j \rightarrow i) \in \edge} \left(- \frac {x+\hbar-u_\alpha}{x-u_\alpha} \right)
\end{equation}
Because $\frac {\ozeta_{ij}(x)}{\ozeta_{ji}(-x)} = \frac {\zeta_{ij}(x)}{\zeta_{ji}(-x)}$, the map
\begin{equation}
\label{eqn:xi}
\Xi : \field [z_{i1},\dots,z_{in_i}]_{i \in I}^{\emph{sym}} \longrightarrow \frac {\field [z_{i1},\dots,z_{in_i}]_{i \in I}^{\emph{sym}} \cdot \prod^{i \in I}_{1\leq a \neq b \leq n_i} (z_{ia} - z_{ib})}{\prod^{i \in I}_{1\leq a \neq b \leq n_i} (z_{ia} - z_{ib} - \hbar)\prod^{(\alpha : i \rightarrow j)\in \edge}_{a \leq n_i, b \leq n_j} (z_{ia} + u_{\alpha} - z_{jb})}
\end{equation}
of multiplication by
\begin{equation}
\label{eqn:division}
\frac{\prod^{i \in I}_{1 \leq a \neq b \leq n_i} \left( z_{ib}-z_{ia} \right)}{\prod^{i \in I}_{1 \leq a \neq b \leq n_i} \left(z_{ib} - z_{ia} - \hbar\right)  \prod^{(\alpha : i \rightarrow j) \in \edge}_{1 \leq a \leq n_i, 1\leq b \leq n_j,(i,a) \neq (j,b)} \left(z_{jb}-z_{ia}-u_\alpha \right)}
\end{equation}
intertwines the shuffle product $*$ of \eqref{eqn:mult} in the LHS with the shuffle product $\bar{*}$ in the RHS, the latter being defined using the functions \eqref{eqn:ozeta} instead of \eqref{eqn:zeta}.

\end{proposition}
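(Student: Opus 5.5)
The statement to prove is Proposition \ref{prop:o}. The plan is to reduce it to a single identity about the zeta functions and then observe that multiplication by \eqref{eqn:division} acts as a ``gauge transformation'' of the shuffle product.

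\emph{Step 1: the zeta identity.} First verify the identity $\frac{\ozeta_{ij}(x)}{\ozeta_{ji}(-x)} = \frac{\zeta_{ij}(x)}{\zeta_{ji}(-x)}$ by splitting into the diagonal factor and the arrow factors.
\begin{itemize}
\item For the diagonal factor ($i=j$), the left side gives $\frac{x}{x+\hbar}\cdot\frac{-x+\hbar}{-x} = \frac{x-\hbar}{x+\hbar}$, and the right side gives $\frac{x-\hbar}{x}\cdot\frac{-x}{-x-\hbar}$, which agrees.
\item For an arrow $\alpha: i\to j$ of $\edge$, $\zeta_{ij}(x)$ contains $(-x-u_\alpha)$. $\zeta_{ji}(-x)$ contains the factor from $\dalpha: j\to i$, namely $(x-\hbar+u_\alpha)$. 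Their ratio is $-\frac{x+u_\alpha}{x-\hbar+u_\alpha}$.
\item On the $\ozeta$ side, $\alpha: i\to j$ contributes to $\ozeta_{ji}(-x)$ the factor $-\frac{-x+\hbar-u_\alpha}{-x-u_\alpha}$, whose inverse is the same ratio $-\frac{x+u_\alpha}{x-\hbar+u_\alpha}$.
\item Arrows $j\to i$ are handled symmetrically.
\end{itemize}

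\emph{Step 2: factor the multiplier over pairs.} Write $\zeta_{ij}(x) = \ozeta_{ij}(x)\,\phi_{ij}(x)$. Step 1 gives $\phi_{ij}(x) = \phi_{ji}(-x)$. Now read the multiplier \eqref{eqn:division} as $\Phi_{\bn}(z) = \prod_{\text{unordered pairs }\{(i,a),(j,b)\}} \psi$, where $\psi$ is essentially $\phi_{ij}(z_{ia}-z_{jb})^{-1}$, up to a harmless symmetric normalization.

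Concretely, within one color the pairs $a\neq b$ give $\frac{(z_{ib}-z_{ia})(z_{ia}-z_{ib})}{(z_{ib}-z_{ia}-\hbar)(z_{ia}-z_{ib}-\hbar)}$, which should be compared with $\phi_{ii}(x)^{-1}$. Here $\phi_{ii}(x) = \frac{(x-\hbar)(x+\hbar)}{x^2}\times(\text{loop factors})$. The loop factors must match the terms $(z_{jb}-z_{ia}-u_\alpha)$ for $a\ne b$ in \eqref{eqn:division}.

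\emph{Step 3: the cocycle property.} The key point is
\[
\Phi_{\bn+\bn'}(z) = \Phi_{\bn}(z')\,\Phi_{\bn'}(z'')\prod_{\text{cross pairs}}\phi_{ij}(z_{ia}-z_{jb})^{-1},
\]
where $z',z''$ are the first and second blocks of variables. The cross product is symmetric in swapping the roles of the two blocks, because $\phi_{ij}(x) = \phi_{ji}(-x)$. The cross product also involves diagonal pairs, not just arrows, so it is invariant under the color-symmetric group.

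Since $\Phi_{\bn+\bn'}$ is symmetric, it can be pulled inside $\textrm{Sym}$. This gives
\[
\Phi\cdot (E*E') = \textrm{Sym}\Big[\Phi E\,\Phi E' \prod \phi^{-1}\zeta\Big] = \textrm{Sym}\Big[\Xi E\,\Xi E'\prod \ozeta\Big] = \Xi E\,\bar{*}\,\Xi E'.
\]

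\emph{Step 4: the main obstacle.} The hard part is the exact bookkeeping in Step 2: checking that the arrow factors of $\phi_{ij}$, with $\alpha: i\to j$ appearing through both $\zeta_{ij}$ and $\zeta_{ji}$ (the latter via $\dalpha$), reproduce precisely the single product $\prod(z_{jb}-z_{ia}-u_\alpha)$ in \eqref{eqn:division}, up to signs absorbed into a color-symmetric constant.

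The first thing to try is to compute $\phi_{ij}(x) = \zeta_{ij}(x)/\ozeta_{ij}(x)$ directly. Its arrow part should be $\prod_{\alpha:i\to j}(-x-u_\alpha)\prod_{\alpha:j\to i}(x-u_\alpha)\cdot(\text{stuff})$, and one then matches this pair by pair. If the matching works, the image of $\Xi$ lies in the stated codomain, and the loops and the excluded $(i,a)=(j,b)$ factor are then consistent with that codomain.
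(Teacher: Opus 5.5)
Your proposal is correct and is exactly the argument the paper has in mind: the paper leaves this proposition as an exercise, citing only the ratio identity you verify in Step 1, and your Steps 2--3 supply the cocycle argument behind it. The bookkeeping you flag in Step 4 closes exactly. For $i\neq j$ one gets $\phi_{ij}(x)=\prod_{\alpha:i\to j}(-x-u_\alpha)\prod_{\beta:j\to i}(x-u_\beta)$, and $\phi_{ii}(x)=\frac{(x-\hbar)(x+\hbar)}{x^2}\prod_{\alpha:i\to i}(-x-u_\alpha)(x-u_\alpha)$, so \eqref{eqn:division} equals $\prod\phi_{ij}(z_{ia}-z_{jb})^{-1}$ over unordered pairs $\{(i,a),(j,b)\}$ with no sign left over. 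That matters, because a leftover constant $c_{\bn}$ would only be harmless if $c_{\bn+\bn'}=c_{\bn}c_{\bn'}$. Also, in Step 3 you only need the symmetry of the full multiplier, not of the cross product, which is not invariant under the whole color-symmetric group.
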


\medskip 

\subsection{The shifted double}
\label{sub:double}

For any $\br = (r_i)_{i \in I} \in \zz$, we define the (extended) shifted double shuffle algebra as
\begin{equation}
\label{eqn:double loc}
\CS^{\br}_{\text{loc}} = \CS^+_{\text{loc}} \otimes \field [p_{i,k}, c_{i, k} ]_{i \in I, k \geq 0} \otimes \CS^-_{\text{loc}}
\end{equation}
The commuting elements $p_{i,k}, c_{i,k}$ will be called Cartan elements. The multiplication between the three factors of \eqref{eqn:double loc} is controlled by relations \eqref{eqn:rel double 0}-\eqref{eqn:rel double 2} below 
\begin{equation}
\label{eqn:rel double 0}
c_{i, k} \text{ are central}
\end{equation}
\begin{equation}
\label{eqn:rel double 1}
\begin{split}
&[p_{i,k}, E] = \sum_{a=1}^{n_i} \left( z_{ia}^k - (z_{ia}+\hbar)^k\right) E \\
&[p_{i,k}, F] = \sum_{a=1}^{n_i} \left(  (z_{ia}+\hbar)^k - z_{ia}^k\right) F
\end{split}
\end{equation}
for all $E(z_{i1},\dots,z_{in_i})_{i \in I} \in \CS^+_{\text{loc}}$, $F(z_{i1},\dots,z_{in_i})_{i \in I} \in \CS^-_{\text{loc}}$, $i \in I$ and $k > 0$. Finally, we impose for all $i,j \in I$ and $k,\ell \in \BZ$ the relation
\begin{equation}
\label{eqn:rel double 2}
\Big[e_{i,k}, f_{j,\ell} \Big] = \delta_{ij} \cdot \frac {\xi_{i,k+\ell}}{\gamma_i} 
\end{equation} 
where $\gamma_i = \prod_{\alpha : i \rightarrow i} \left[ u_\alpha(\hbar-u_\alpha)\right]$ and
$$
\xi_i(x) = \sum_{k=r_i}^\infty \frac {\xi_{i,k}}{x^k} = x^{-r_i} \exp \left[\sum_{k=1}^{\infty} \frac 1{kx^k} \left(-c_{i,k}+p_{i,k}+\sum_{\ell=0}^k p_{i,\ell} (-\hbar)^{k-\ell}{k \choose \ell} \right. \right. 
$$
\begin{equation}
\label{eqn:def xi}
\left. \left. - \sum_{\alpha:i \rightarrow j} \sum_{\ell=0}^{k} p_{j,\ell} (-u_\alpha)^{k-\ell} {k \choose \ell} - \sum_{\alpha:j \rightarrow i} \sum_{\ell=0}^{k} p_{j,\ell} (u_\alpha-\hbar)^{k-\ell} {k \choose \ell} \right) \right]
\end{equation}
(we make the convention that $\xi_{i,k} = 0$ for $k < r_i$). As a consequence of Theorem \ref{thm:shuffle loc}, formulas \eqref{eqn:rel double 0}-\eqref{eqn:rel double 2} are sufficient to completely determine the algebra structure on \eqref{eqn:double loc}: for any $E \in \CS^+_{\text{loc}}$ and $F \in \CS^-_{\text{loc}}$, one evaluates $FE$ by writing 
\begin{align*} 
&E = \text{linear combination of }e_{i_1,k_1} \dots e_{i_m,k_m} \\
&F = \text{linear combination of }f_{j_1,\ell_1}\dots f_{j_n,\ell_n}
\end{align*} 
and then uses \eqref{eqn:rel double 0}-\eqref{eqn:rel double 2} to reorder all the $e$'s to the left of all the $p,c$'s, which in turn should be to the left of all the $f$'s. We define 
\begin{equation}
\label{eqn:integral version}
\CS^{\br} \subset \CS^{\br}_{\text{loc}}
\end{equation}
to be the $\ring$-subalgebra generated by

\medskip 

\begin{itemize}[leftmargin=*]

\item the integral shuffle algebras $\CS^\pm \subset \CS^\pm_{\text{loc}}$, and 

\medskip 

\item the subalgebra $\ring [p_{i,k}, c_{i,k}]_{i \in I, k > 0}$ of Cartan elements.

\end{itemize}

\medskip 

\noindent We do not necessarily expect a triangular decomposition $\CS \neq \CS^+ \otimes (\text{Cartan}) \otimes \CS^-$, especially if the quiver $Q$ has loops.

\medskip

\subsection{Difference operators}
\label{sub:bfn}

Consider any $\bd = (d_i)_{i \in I} \in \nn$ and parameters
\begin{equation}
\label{eqn:framing}
\underbrace{\sigma_{i1},\dots,\sigma_{ik_i}}_{\text{corresponding to arrows }i \rightarrow \square} \quad \text{and} \quad \underbrace{\tau_{i1},\dots,\tau_{i\ell_i}}_{\text{corresponding to arrows } \square \rightarrow i}
\end{equation}
Let $\bk = (k_i)_{i \in I}$ and $\bell = (\ell_i)_{i \in I}$. Consider the $\field$-algebra of difference operators
\begin{equation}
\label{eqn:difference operators}
\DD_{\bd|\bk,\bell} = \frac {\field(w_{ia})_{i \in I, a \in \{1,\dots,d_i\}} \otimes \field[D^{\pm 1}_{ia}]_{i \in I, a \in \{1,\dots,d_i\}}}{D_{ia} w_{jb} = (w_{jb}+\hbar\delta_{ij}\delta_{ab})D_{ia}}[\sigma_{i1},\dots,\sigma_{ik_i}, \tau_{i1},\dots,\tau_{i\ell_i}]_{i \in I}
\end{equation}
We recall the following construction of Finkelberg-Tsymbaliuk (\cite{FT}, type $A$), Frassek-Tsymbaliuk (\cite{FrT}, classical types) and Tsymbaliuk (\cite{Tsymbaliuk}, general type) that provides a shuffle algebra incarnation of the work \cite{GKLO} on Yangians and difference operators. By analogy with \cite[Theorem B.17]{FrT}, we have the following algebra homomorphisms
\begin{equation}
\label{eqn:tsymbaliuk}
\Ts : \CV^\pm_{\text{loc}} \rightarrow \DD_{\bd|\bk,\bell}
\end{equation}
defined for any $E(z_{i1},\dots,z_{in_i})_{i \in I} \in \CV^+_{\text{loc}}$ and $F(z_{i1},\dots,z_{in_i})_{i \in I} \in \CV^-_{\text{loc}}$ by
\begin{equation}
\label{eqn:action e}
E \mapsto \hbar^{-|\bn|} \sum^{I-\text{compositions }P}_{\{n_i = n_i^{(1)}+ \dots + n_i^{(d_i)}\}_{i \in I}} \text{Res}^+_P(\Xi(E))
\end{equation} 
$$
\prod^{(i,a)}_{0 \leq c < n_i^{(a)}} 
\frac {\prod^{\alpha : i \rightarrow j}_{(j,b) \neq (i,a)} (w_{ia} + c \hbar - w_{jb} + u_\alpha ) \prod_{s=1}^{k_i} (w_{ia} + c \hbar - \sigma_{is})}{\prod_{b \neq a} (w_{ia} + c \hbar - w_{ib})} \prod_{(i,a)} D_{ia}^{n_i^{(a)}}
$$
\begin{equation}
\label{eqn:action f}
F \mapsto \hbar^{-|\bn|} \sum^{I-\text{compositions }P}_{\{n_i = n_i^{(1)}+ \dots + n_i^{(d_i)}\}_{i \in I}} \text{Res}^-_P(\Xi(F))
\end{equation} 
$$
\prod^{(i,a)}_{0 \leq c < n_i^{(a)}} 
\frac {\prod^{\alpha : j \rightarrow i}_{(j,b) \neq (i,a)} (w_{ia} + c\hbar - w_{jb} - u_\alpha) \prod_{t=1}^{\ell_i} (w_{ia} + c\hbar - \tau_{it})}{\prod_{b \neq a} (w_{ia} + c\hbar - w_{ia})} \prod_{(i,a)} D_{ia}^{-n_i^{(a)}}
$$
In the formulas above, we recall that $\Xi(E)$ and $\Xi(F)$ of \eqref{eqn:xi} are rational functions in $z_{ia}$ with at most simple poles at $z_{ia} = z_{ib} + \hbar$. For such a rational function $G(z_{i1},\dots,z_{in_i})_{i \in I}$ and any $I$-composition $P = \{n_i = n_i^{(1)}+ \dots + n_i^{(d_i)}\}_{i \in I}$, let
\begin{equation}
\label{eqn:residue}
\text{Res}_P(G) = 
\end{equation}
$$
\left( \mathop{\text{Res}}_{z_{i,\nu_i^{(a)}} = z_{i,\nu_i^{(a-1)}+1} + (n_i^{(a)}-1)\hbar} \cdots \mathop{\text{Res}}_{z_{i,\nu_i^{(a-1)}+2} = z_{i,\nu_i^{(a-1)}+1}+\hbar} G \right)_{i \in I, a \in \{1,\dots, d_i\}}
$$
where $\nu_i^{(a)} = n_i^{(1)}+\dots+n_i^{(a)}$ for all $a \in \{1,\dots,d_i\}$. Then in \eqref{eqn:action e}-\eqref{eqn:action f}, one sets
\begin{align*}
&\text{Res}^+_P(G) = \frac {\text{Res}_P(G)|_{z_{i,\nu_i^{(a-1)}+1 = w_{ia}}}}{\prod_{i \in I} \prod_{a=1}^{d_i} \left[ \hbar^{n_i^{(a)}-1}(n_i^{(a)}-1)!  \prod_{\alpha : i \rightarrow i} \frac {u_\alpha^{n_i^{(a)}-1}}{(u_{\alpha}+\hbar)\dots (u_{\alpha}+(n_i^{(a)}-1)\hbar)}\right]} \\
&\text{Res}^-_P(G) = \frac {\text{Res}_P(G)|_{z_{i,\nu_i^{(a-1)}+1 = w_{ia} -n_i^{(a)}\hbar}}}{\prod_{i \in I} \prod_{a=1}^{d_i} \left[ \hbar^{n_i^{(a)}-1}(n_i^{(a)}-1)!  \prod_{\alpha : i \rightarrow i} \frac {u_\alpha^{n_i^{(a)}-1}}{(u_{\alpha}+\hbar)\dots (u_{\alpha}+(n_i^{(a)}-1)\hbar)}\right]}
\end{align*}
In the following result, we write $\bd^\vee - \bk - \bell$ for the $I$-tuple of integers
$$
2d_i -  \sum_{j \in I} \left(d_j \Big|\text{arrows }i \rightarrow j\Big| + d_j \Big|\text{arrows }j \rightarrow i\Big|\right) - k_i - \ell_i
$$
We denote by $\CS^{\bd^\vee-\bk-\bell}_{\text{loc}}$ the shifted double shuffle algebra of \eqref{eqn:double loc}.

\medskip 

\begin{theorem}
\label{thm:tsymbaliuk}

(\cite{FT, FrT, Tsymbaliuk}) Formulas \eqref{eqn:action e}-\eqref{eqn:action f} yield algebra homomorphisms \eqref{eqn:tsymbaliuk}. The restriction of these homomorphisms to the subalgebras $\CS^\pm_{\emph{loc}} \subset \CV^\pm_{\emph{loc}}$, together with the assignments
\begin{equation}
\label{eqn:assignment 1}
p_{i,k} \mapsto w_{i1}^k+\dots + w_{id_i}^k
\end{equation}
\begin{equation}
\label{eqn:assignment 2}
c_{i,k} \mapsto \sum_{s=1}^{k_i} \sigma_{is}^k + \sum_{t=1}^{\ell_i} \tau_{it}^k 
\end{equation}
glue to a morphism 
\begin{equation}
\label{eqn:tsymbaliuk theorem}
\eTs : \CS^{\bd^\vee-\bk-\bell}_{\emph{loc}} \rightarrow \DD_{\bd|\bk,\bell} 
\end{equation}
that we dub the \emph{Finkelberg-Frassek-Tsymbaliuk homomorphism}. 

\end{theorem}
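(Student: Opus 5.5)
We will prove Theorem \ref{thm:tsymbaliuk} in three steps, taking the Finkelberg--Frassek--Tsymbaliuk formalism as a black box wherever possible. The first step is to show that $\Ts$ restricted to $\CV^+_{\text{loc}}$ (and similarly to $\CV^-_{\text{loc}}$) is an algebra homomorphism. By Proposition \ref{prop:o}, $\Xi$ intertwines $*$ with $\bar{*}$, so it suffices to show that
\[
G \mapsto \sum_P \text{Res}^+_P(G)\,(\text{prefactor})\, \prod D_{ia}^{n_i^{(a)}}
\]
is multiplicative for $\bar{*}$. We would check this directly. In the product $\Ts(E)\Ts(E')$, move $\prod D^{n}$ past the coefficient functions of $E'$; this shifts $w_{ia} \mapsto w_{ia}+n_i^{(a)}\hbar$. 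A string of length $n_i^{(a)}+n_i'^{(a)}$ on the $a$-th copy then splits into an $E$-part followed by an $E'$-part. Comparing with the residue of the symmetrized $\bar{*}$-product, only the terms of the symmetrization in which each string is a prefix-$E$ / suffix-$E'$ concatenation survive the iterated residues, because of the factor $x/(x+\hbar)$ in $\ozeta_{ii}$. The cross factors $\ozeta_{ij}$ evaluated on strings telescope into exactly the ratios of the prefactors $\prod (w_{ia}+c\hbar-w_{jb}+u_\alpha)/\prod(w_{ia}+c\hbar-w_{ib})$. This is the rational analogue of \cite[Theorem B.17]{FrT}, and the computation proceeds line by line as there.

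The second step is to check the Cartan relations. Since $p_{i,k} \mapsto \sum_a w_{ia}^k$ and $D_{ia}$ shifts $w_{ia}$ by $\hbar$, $[\,\sum_a w_{ia}^k, \Ts(E)]$ equals $\Ts(E)$ multiplied by $\sum_a (w_{ia}^k-(w_{ia}+n\hbar)^k)$ on each term. On a string this is $\sum_{c}(z^k-(z+\hbar)^k)$ evaluated at $z=w_{ia}+c\hbar$, which telescopes, so \eqref{eqn:rel double 1} holds. Relation \eqref{eqn:rel double 0} holds trivially.

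The third step is relation \eqref{eqn:rel double 2}, which is the main obstacle. We have
\[
\Ts(e_{i,k}) = \sum_a w_{ia}^k \,\Lambda^+_{ia}(w)\, D_{ia}, \qquad \Ts(f_{j,\ell}) = \sum_b (w_{jb}-\hbar)^\ell\, \Lambda^-_{jb}(w)\, D_{jb}^{-1}.
\]
Terms with $(i,a)\neq(j,b)$ cancel in the commutator, by the standard GKLO check that the shifted coefficient ratios agree. The remaining diagonal terms give
\[
\sum_a \bigl[w_{ia}^{k+\ell}\text{-type rational function}\bigr] \times \bigl(\Lambda^+_{ia}(w)\Lambda^-_{ia}(w+\hbar)-\Lambda^-_{ia}(w)\Lambda^+_{ia}(w-\hbar)\bigr).
\]
This should equal the coefficient expansion of $\xi_i(x)/\gamma_i$ via a partial fraction, i.e. a contour-integral argument: $\xi_i(x)$ evaluated under \eqref{eqn:assignment 1}--\eqref{eqn:assignment 2} equals, by summing the logarithm in \eqref{eqn:def xi}, the rational function
\[
x^{-r_i}\,\frac{\prod_s(x-\sigma_{is})\prod_t(x-\tau_{it})\prod_{b}(x-w_{ib})(x-w_{ib}-\hbar)}{\prod_{\alpha: i\to j,\,b}(x-w_{jb}-u_\alpha)\prod_{\alpha:j\to i,\,b}(x-w_{jb}+u_\alpha-\hbar)}
\]
(up to the sign conventions to be fixed). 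Its expansion at $x=\infty$ starts in degree $r_i = d_i^\vee-k_i-\ell_i$, precisely by degree counting, which explains the shift. The commutator is then the sum of residues of $x^{k+\ell}\xi_i(x)\,dx$ at the finite poles, i.e. minus the residue at infinity, which is $\xi_{i,k+\ell}$. The delicate points are twofold. For loops $\alpha: i\to i$, the poles at $x=w_{ib}+u_\alpha$ coincide with self-terms and produce the normalization $\gamma_i$; we would handle this by tracking the $(j,b)=(i,a)$ exclusion in the prefactors. We must also confirm that the normalizations in $\text{Res}^\pm_P$ are consistent for single-variable generators. Finally, Theorem \ref{thm:shuffle loc} ensures that $e_{i,k}$ and $f_{j,\ell}$ generate, so the relations of this step determine everything, and the three maps glue to $\eTs$.
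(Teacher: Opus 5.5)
The paper gives no proof of this theorem; it is quoted from \cite{FT, FrT, Tsymbaliuk} as the rational analogue of \cite[Theorem B.17]{FrT}. Your outline is that standard argument: multiplicativity on each half via $\Xi$ and iterated residues, telescoping for the $p_{i,k}$-relations, a GKLO computation of $[e_{i,k},f_{j,\ell}]$, and gluing via Theorem \ref{thm:shuffle loc}. Steps 1 and 2, the off-diagonal cancellation and the gluing are fine as sketches.

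The genuine problem is in step 3, which you rightly call the crux: your evaluation of $\xi_i(x)$ is wrong, and not merely up to signs. Since $\exp\sum_{k\ge1}\frac{y^k}{kx^k}=\frac{x}{x-y}$, the terms entering \eqref{eqn:def xi} with a plus sign, namely $\sum_a w_{ia}^k+\sum_a(w_{ia}-\hbar)^k$, produce denominators. The terms with a minus sign ($c_{i,k}$ and the arrow sums $\sum_b(w_{jb}-u_\alpha)^k$, $\sum_b(w_{jb}+u_\alpha-\hbar)^k$) produce numerators. The accompanying powers of $x$ (using $p_{j,0}\mapsto d_j$) cancel $x^{-r_i}$ exactly. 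The result, which the paper records right after the statement, is
\[
\eTs(\xi_i(x))=\frac{\prod_s(x-\sigma_{is})\prod_t(x-\tau_{it})\prod_{\alpha:i\to j,\,b}(x-w_{jb}+u_\alpha)\prod_{\alpha:j\to i,\,b}(x-w_{jb}-u_\alpha+\hbar)}{\prod_a(x-w_{ia})(x-w_{ia}+\hbar)} .
\]
Your expression inverts the $w$-dependent part (with the shifts reversed) and keeps a spurious $x^{-r_i}$. Its poles therefore sit at $x=w_{jb}+u_\alpha$ and $x=w_{jb}-u_\alpha+\hbar$, where the commutator has no terms. It also grows like $x^{2(k_i+\ell_i)}$ at infinity. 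So neither the residue matching nor the degree count you invoke holds for it.

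With the correct formula the argument goes through.
\begin{itemize}
\item The poles are simple, at $x=w_{ia}$ and $x=w_{ia}-\hbar$. The residues of $x^{k+\ell}\xi_i(x)\,dx$ there reproduce, respectively, the $ef$-term (where $D_{ia}$ shifts the $f$-coefficient by $w_{ia}\mapsto w_{ia}+\hbar$) and the $fe$-term (shift by $-\hbar$).
\item $\gamma_i$ does not come from poles at $w_{ib}+u_\alpha$. For each loop $\alpha:i\to i$, the numerator contains the $b=a$ factors $(x-w_{ia}+u_\alpha)(x-w_{ia}-u_\alpha+\hbar)$, which equal $u_\alpha(\hbar-u_\alpha)$ at both poles. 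These are exactly the factors excluded by $(j,b)\neq(i,a)$ in \eqref{eqn:action e}--\eqref{eqn:action f}, hence the division by $\gamma_i$.
\end{itemize}

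Finally, your ``conventions to be fixed'' are substantive, not cosmetic. With \eqref{eqn:action e}--\eqref{eqn:action f} as printed, the leftover $b=a$ factor $\pm\hbar$ against the prefactor $\hbar^{-2}$ gives $\frac{1}{\hbar\gamma_i}$ times the sum of finite residues. That is $\frac{1}{\hbar\gamma_i}$ times the coefficient of $x^{-k-\ell-1}$ in $\xi_i(x)$. Moreover, the $\tau$-dependence that comes out is $\prod_t(x+\hbar-\tau_{it})$ rather than $\prod_t(x-\tau_{it})$. Already for one vertex, no arrows, $d_i=1$, $k_i=\ell_i=0$, one gets $[\eTs(e_{i,1}),\eTs(f_{i,0})]=\hbar^{-2}[wD,D^{-1}]=\hbar^{-1}$, whereas $\xi_{i,1}=0$. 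So to close step 3 you must adjust the normalization in \eqref{eqn:rel double 2} and the $\tau$-convention in \eqref{eqn:assignment 2} accordingly, rather than verify them as printed.
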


\medskip 

\noindent The assignments for the Cartan elements in Theorem \ref{thm:tsymbaliuk} imply that the generating series $\xi_i(x)$ of \eqref{eqn:def xi} corresponds to
$$
\frac {\prod^{(j,b)}_{\alpha : i \rightarrow j} \left(x + u_\alpha - w_{jb}\right) \prod^{(j,b)}_{\alpha : j \rightarrow i} \left(x + \hbar - w_{jb} - u_\alpha  \right) \prod_{s=1}^{k_i} \left(x - \sigma_{is} \right) \prod_{t=1}^{\ell_i} \left(x - \tau_{it} \right)}{\prod_{i,a} \left(x - w_{ia}\right)\left(x + \hbar - w_{ia}\right)}
$$
viewed as an element of $\DD_{\bd|\bk,\bell}(x)$.

\medskip 

\subsection{Coulomb branches}
\label{sub:coulomb}

The aforementioned connection between double shuffle algebras and difference operators, which we attribute to Finkelberg-Frassek-Tsymbaliuk, is also compatible with a very important connection between Yangians and Coulomb branch algebras (\cite{BFNQuiver, BDG, KWWY}). In more detail, we recall that for any $\bd \in \nn$ and framing parameters \eqref{eqn:framing}, the Coulomb branch $\ring$-algebra $\CH_{\bd|\bk,\bell}$ was defined in \cite{BFN}. We will not review the definition of this algebra, and simply use the following facts in the present paper:

\medskip 

\begin{itemize}[leftmargin=*]

\item there is an injective $\ring$-algebra homomorphism
\begin{equation}
\label{eqn:injective}
\CH_{\bd|\bk,\bell} \hookrightarrow \DD_{\bd|\bk,\bell}
\end{equation}
induced by equivariant localization (\cite{BFN, BFNQuiver, KWWY}).

\medskip 

\item the image of the map \eqref{eqn:injective} is generated over $\ring$ by the so-called  dressed positive and negative fundamental monopole operators
\begin{equation}
\label{eqn:special e}
E_{\bn,g} = \sum_{\{A_i \subset \{1,\dots,d_i\}, |A_i| = n_i\}_{i \in I}} g(w_{ia})_{i \in I, a \in A_i}
\end{equation} 
$$
\frac {\prod_{i,j \in I} \prod^{\alpha : i \rightarrow j}_{a \in A_i, b \notin A_j} \left(w_{ia}  - w_{jb} + u_\alpha \right) \prod_{i \in I} \prod_{a \in A_i} \prod_{s=1}^{k_i} \left(w_{ia}  - \sigma_{is}\right)}{\prod_{i \in I} \prod_{a \in A_i, b \notin A_i} \left(w_{ia} - w_{ib} \right)} \prod_{i \in I}  \prod_{a \in A_i} D_{ia}
$$
\begin{equation}
\label{eqn:special f}
F_{\bn,g} = \sum_{\{A_i \subset \{1,\dots,d_i\}, |A_i| = n_i\}_{i \in I}} g(w_{ia}-\hbar)_{i \in I, a \in A_i}
\end{equation} 
$$
\frac {\prod_{i,j \in I} \prod^{\alpha : j \rightarrow i}_{a \in A_i, b \notin A_j} \left(w_{ia} - w_{jb} - u_\alpha \right) \prod_{i \in I} \prod_{a \in A_i} \prod_{t=1}^{\ell_i} \left(w_{ia} - \tau_{it} \right)}{\prod_{i \in I} \prod_{a \in A_i, b \notin A_i} \left(w_{ia}-w_{ib}\right)} \prod_{i \in I} \prod_{a \in A_i} D_{ia}^{-1}
$$
for $\b0 \leq \bn \leq \bd$, $g \in \ring[z_{i1},\dots,z_{in_i}]^{\sym}_{i \in I}$ (\cite{BFN, DK, FT, SS, Weekes}).   

\end{itemize}

\medskip 

\noindent The following analogue of \cite[Lemma 2.12]{Tsymbaliuk} (see also \cite{JNCoulomb} for notation closer to ours) is key to the connection between shuffle algebras and Coulomb branches.

\medskip 

\begin{proposition}
\label{prop:key}

For the shuffle elements $e_{\bn,g}, f_{\bn,g}$ of \eqref{eqn:special} and \eqref{eqn:special transpose}, we have
$$
\eTs(e_{\bn,g}) = \begin{cases} E_{\bn,g} &\text{if } \bn \leq \bd \\ 0&\text{otherwise} \end{cases}, \qquad \eTs(f_{\bn,g}) = \begin{cases} F_{\bn,g} &\text{if } \bn \leq \bd \\ 0&\text{otherwise} \end{cases},
$$
for any $\bn \in \nn$ and $g \in \ring[z_{i1},\dots,z_{in_i}]^{\emph{sym}}_{i \in I}$.

\end{proposition}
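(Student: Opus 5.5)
We compute $\eTs(e_{\bn,g})$ directly from formula \eqref{eqn:action e}; the case of $f_{\bn,g}$ is the same computation with \eqref{eqn:action f} and the substitution $z_{i,\nu_i^{(a-1)}+1} = w_{ia} - n_i^{(a)}\hbar$. This substitution produces the shift $g(w_{ia}-\hbar)$ in \eqref{eqn:special f}.

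\emph{Only compositions with parts $0$ and $1$ contribute.} The first step is to apply the map $\Xi$ of \eqref{eqn:xi} to $e_{\bn,g}$. The factor $\prod_{a\neq b}(z_{ib}-z_{ia}+\hbar)$ in \eqref{eqn:special} (the diagonal terms contribute $\hbar^{n_i}$) cancels the denominator $\prod_{a \neq b}(z_{ib}-z_{ia}-\hbar)$ of \eqref{eqn:division} only up to the swap $a \leftrightarrow b$, and the product over ordered pairs is symmetric under that swap. Hence $\Xi(e_{\bn,g})$ equals
\[
\hbar^{|\bn|} g \prod_{i}\prod_{a\neq b}(z_{ib}-z_{ia}) \Big/ \prod_{\alpha:i\to j}\prod_{(i,a)\neq(j,b)}(z_{jb}-z_{ia}-u_\alpha),
\]
which has no poles of the form $z_{ia} = z_{ib}+\hbar$. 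Therefore every iterated residue $\text{Res}_P$ in \eqref{eqn:residue} vanishes as soon as some part satisfies $n_i^{(a)} \geq 2$. We must check that the quiver-loop factors in the denominator do not create such poles; under Assumption \soft they lie at $u_\alpha \neq \pm\hbar$, so they do not. Consequently only $I$-compositions with all $n_i^{(a)} \in \{0,1\}$ survive. These correspond exactly to choices of subsets $A_i \subset \{1,\dots,d_i\}$ with $|A_i| = n_i$, and none exist unless $\bn \leq \bd$. This already gives the vanishing for $\bn \not\leq \bd$.

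\emph{Matching the terms.} For a surviving composition, $\text{Res}_P$ is empty, and $\text{Res}^+_P$ just substitutes $z_{i,\cdot} \mapsto w_{ia}$ for $a \in A_i$; the normalizing denominator equals $1$ since all parts equal $1$. Summing over orderings, color-symmetry turns the sum over compositions into a sum over subsets, and we must verify that no combinatorial factor appears. For each term we then collect:
\begin{itemize}[leftmargin=*]
\item the prefactor $\hbar^{-|\bn|}$, which cancels $\hbar^{|\bn|}$;
\item from $\Xi(e_{\bn,g})$, the factor $g(w_{ia})$, the Vandermonde $\prod_{a\neq b\in A_i}(w_{ib}-w_{ia})$, and $1/\prod_{\alpha}\prod(w_{jb}-w_{ia}-u_\alpha)$ over pairs inside $A$;
\item from the second line of \eqref{eqn:action e}, $\prod_{\alpha:i\to j}\prod_{b\neq a}(w_{ia}-w_{jb}+u_\alpha)\prod_s(w_{ia}-\sigma_{is}) / \prod_{b\neq a}(w_{ia}-w_{ib})$, with $b$ ranging over all of $\{1,\dots,d_j\}$.
\end{itemize}
The arrow factors with $b \in A_j$ cancel against the $\Xi$ denominator, since $w_{ia}-w_{jb}+u_\alpha = -(w_{jb}-w_{ia}-u_\alpha)$. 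What remains are exactly the factors with $b\notin A_j$ appearing in \eqref{eqn:special e}. Similarly, the Vandermonde over $A_i$ cancels the part of $\prod_{b\neq a}(w_{ia}-w_{ib})$ with $b\in A_i$, leaving the denominator over $b \notin A_i$.

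\emph{Main obstacle: signs.} The delicate part is the sign bookkeeping in the previous step. One must track the factors $(-1)$ coming from the cancellation of the arrow factors, the overall conventions of \eqref{eqn:zeta}, and the ordering within $\prod_{a\neq b}(z_{ib}-z_{ia})$ versus $\prod_{b\neq a}(w_{ia}-w_{ib})$. The plan is to check that these signs combine to $+1$ by comparing with the case of a single vertex and a single arrow, where the computation reduces to \cite[Lemma 2.12]{Tsymbaliuk}, and then to argue that the sign depends only multiplicatively on pairs of vertices and arrows.
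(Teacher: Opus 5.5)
Your first two steps are correct, and they are the argument behind \cite[Lemma 2.12]{Tsymbaliuk}; the paper gives no proof of its own here and only appeals to that lemma. Since $(-1)^{n_i(n_i-1)}=1$, the off-diagonal factors of \eqref{eqn:special} cancel the denominator $\prod_{a\neq b}(z_{ib}-z_{ia}-\hbar)$ of \eqref{eqn:division} exactly. So $\Xi(e_{\bn,g})$ is regular along $z_{ia}=z_{ib}+\hbar$, given $u_\alpha\neq\pm\hbar$ for loops. Only compositions with parts in $\{0,1\}$ survive, each tuple of subsets $A_i$ comes from exactly one of them, and the case $\bn\not\leq\bd$ vanishes.

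The gap is the final step. You leave the sign count undone and plan to show the signs combine to $+1$. With the formulas as displayed they do not, and no reduction is needed to see it. The Vandermonde ratio $\prod_{a\neq b\in A_i}(w_{ib}-w_{ia})/\prod_{a\neq b\in A_i}(w_{ia}-w_{ib})$ equals $(-1)^{n_i(n_i-1)}=1$. However, for each arrow $\alpha:i\to j$ and each pair $a\in A_i$, $b\in A_j$ with $(i,a)\neq(j,b)$, you get exactly $(w_{ia}-w_{jb}+u_\alpha)/(w_{jb}-w_{ia}-u_\alpha)=-1$. Hence
\[
\eTs(e_{\bn,g})=(-1)^{\sum_{\alpha:i\to j}(n_in_j-\delta_{ij}n_i)}\,E_{\bn,g}=(-1)^{\sum_{\alpha:i\to j,\ i\neq j}n_in_j}\,E_{\bn,g}.
\]

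A concrete instance: take $Q=(1\xrightarrow{\alpha}2)$, $\bd=\bn=(1,1)$, $g=1$. Then $e_{\bn,1}=\hbar^2$ and $\Xi(e_{\bn,1})=\hbar^2/(z_{21}-z_{11}-u_\alpha)$, and \eqref{eqn:action e} gives $\eTs(e_{\bn,1})=-E_{\bn,1}$.

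Your proposed test case has one vertex and one arrow, so the arrow is a loop. Its exponent $n_i(n_i-1)$ is always even, so that case cannot detect the sign. Citing \cite[Lemma 2.12]{Tsymbaliuk} transfers signs only if its conventions agree with \eqref{eqn:division} and \eqref{eqn:action e}, which you would have to check.

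The $f$-side is also not ``the same computation'' as far as signs go. $\text{Res}^-_P$ shifts every variable by $-\hbar$, so the factor of $\Xi(f_{\bn,g})$ attached to $\alpha:j\to i$ becomes $w_{ia}-w_{jb}-u_\alpha$. This is identical to the numerator factor in \eqref{eqn:action f}, whose denominator should read $w_{ia}+c\hbar-w_{ib}$. So $\eTs(f_{\bn,g})=F_{\bn,g}$ holds exactly.

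What your argument actually establishes is the statement up to the explicit sign above on the $e$-side. Literal equality would need a change of sign convention in the displayed formulas; no sign-chasing inside them can produce it. The sign does not affect how the proposition is used: for the surjectivity of \eqref{eqn:hom}, $\pm E_{\bn,g}$ generate the same $\ring$-algebra. Still, replace your planned final step with this direct count and state the conclusion with the sign.
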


\medskip 

\noindent Combining Proposition \ref{prop:key} with the two bullets preceding it implies that the homomorphism \eqref{eqn:tsymbaliuk theorem} descends to a surjective $\ring$-algebra homomorphism
\begin{equation}
\label{eqn:hom}
\CS^{\bd^\vee - \bk - \bell} \twoheadrightarrow \CH_{\bd|\bk,\bell}
\end{equation}
The domain of \eqref{eqn:hom} is the integral version \eqref{eqn:integral version} and not the localized version \eqref{eqn:double loc}. 

\medskip 

\begin{proof} \emph{of Theorem \ref{thm:intro main}:} Since we identified $\CS^+\cong \kha$ in \eqref{eqn:iso intro} and define the shifted double of the loop-nilpotent CoHA to be
$$
\CS^{\bd^\vee-\bk-\bell}
$$
we declare the sought-for homomorphism \eqref{eqn:main} to be precisely \eqref{eqn:hom}. \end{proof}

\bigskip

\section{Cohomological Hall algebras}
\label{sec:coha}

\medskip

\noindent We state and prove the main geometric results in the present paper:

\medskip 

\begin{itemize}[leftmargin=*]

\item the definition of the loop-nilpotent CoHA $\kha$ (Subsection \ref{sub:coha});

\medskip 

\item dimensional reduction, which allows us to prove the injectivity of the morphism
$$
\iota : \kha \rightarrow \kzero \cong \CV^+
$$
(Subsection \ref{sub:injection});

\medskip 

\item the identification of $\text{Im }\iota$ with $\CS^+$, which implies that the loop-nilpotent CoHA has generators indexed by $\bn \in \nn$ and symmetric polynomials $g$ (Subsection \ref{sub:image}).

\medskip 

\item the definition of the BPS Lie algebra and of its loop-nilpotent version, which we show to be (almost) the same in Subsection \ref{sub:support}. This implies a new formula for the Kac polynomial of $Q$ in Subsection \ref{sub:kac}, when we prove Corollary \ref{cor:kac}.

\end{itemize}

\medskip 

\subsection{Stacks of quiver representations}
\label{sub:quiver}

Recall that $Q$ denotes any quiver with vertex set $I$ and arrow set $\edge$. Given any dimension vector $\bn = (n_i)_{i \in I}\in \nn$, define 
\begin{align*}
\mathrm{Rep}_{\bn}(Q) &:= \prod_{(\alpha : i \rightarrow j )\in \edge} \mathrm{Hom}(\BC^{n_i},\BC^{n_j}) \\ 
\GL_{\bn} &:= \prod_{i \in I} \GL_{n_i}(\BC)\end{align*} 
where the group $\GL_{\bn}$ acts on $\mathrm{Rep}_{\bn}(Q)$ by conjugation. Let 
\[\mathfrak{M}_{\bn}(Q) := \mathrm{Rep}_{\bn}(Q)/\GL_{\bn} \] 
be the moduli stack of $\bn$-dimensional representations of $Q$. Let 
\[\mathcal{M}_{\bn}(Q) := \Spec \left(\BC[\mathrm{Rep}_{\bn}(Q)]^{\GL_{\bn}} \right) \] 
be the coarse moduli space parameterizing semisimple points of the aforementioned stack. The map which takes a quiver representation to the direct sum of successive quotients in its Jordan-H\"older filtration is
\[ 
\JH_{\bn} \colon \mathfrak{M}_{\bn}(Q) \rightarrow \mathcal{M}_{\bn}(Q).
\]
Suppose we have an algebraic torus $T \cong (\BC^*)^r$ endowed with characters
\begin{equation}
\label{eqn:elementary character}
e^{u_\alpha} : T \rightarrow \BC^*
\end{equation}
for all $\alpha \in \edge$. We can think of the exponents as equivariant parameters
\begin{equation}
\label{eqn:equivariant parameter}
u_\alpha \in R= H^T\point = \BZ[\text{Lie}(T)]
\end{equation}
The torus $T$ acts on the space of quiver representations by scaling the arrow $\alpha$ via the character $e^{-u_{\alpha}}$. We then define the quotient stacks
\[ \mathfrak{M}_{\bn}^{T}(Q) := \mathrm{Rep}_{\bn}(Q)/(\GL_{\bn} \times T) \] 
and
\[ \mathcal{M}^{T}_{\bn}(Q) := \mathcal{M}_{\bn}(Q)/T\] 
which are well-defined since the $T$ and $\GL_{\bn}$ actions on $\mathrm{Rep}_{\bn}(Q)$ commute. We have an induced morphism
\begin{equation}
\label{eqn:semisimplification}
\JH^{T}_{\bn} \colon \mathfrak{M}^{T}_{\bn}(Q) \rightarrow \mathcal{M}^{T}_{\bn}(Q). 
\end{equation}
The notions above also apply to the doubled and tripled quivers ($\dQ$ and $\tQ$, respectively) with the same vertex set as $Q$, but arrow set enlarged as follows:

\medskip 

\begin{itemize}[leftmargin=*]

\item the doubled quiver $\dQ$ has an additional arrow $\dalpha:j \rightarrow i$ (on which the torus acts by the character $e^{-u_{\dalpha}}$, where $u_{\dalpha}=\hbar-u_\alpha$) for every arrow $\alpha : i \rightarrow j$ of $Q$. Here 
$$
e^{\hbar} : T \rightarrow \BC^*
$$
denotes some henceforth fixed character that does not depend on $\alpha$;  

\medskip 

\item the tripled quiver $\tQ$ involves adding to $\dQ$ a loop $\omega_i : i \rightarrow i$ (on which the torus $T$ acts by the character $e^{\hbar}$) for all $i \in I$. Its representations are depicted graphically as
\begin{equation}
\label{eqn:representation tilde}
\text{Rep}_{\bn}(\tQ) = \Big\{\underset{\omega_i}{\underset{\circlearrowleft}{\BC}}^{n_i} \xleftrightharpoons[\alpha]{\dalpha} \underset{\omega_j}{\underset{\circlearrowleft}{\BC}}^{n_j}\Big\}
\end{equation}

\end{itemize}

\medskip 

\subsection{Vanishing cycles} 
\label{sub:vanishing cycles}

We recall some basics about vanishing cycles and derived categories, with \cite{davison2020cohomological} and \cite{achar} as references. Let $X$ be a finite type separated scheme over $\BC$. We let $\mathrm{D}_{\mathrm{c}}(X)$ be the derived category of constructible complexes of $\BZ$ modules over $X$. Let 
$$
\ptau^{\leq d} \quad \text{and} \quad \ptau^{\geq d}
$$
denote the perverse truncation functors and let $\Per(X)$ denote the abelian subcategory of $\mathrm{D}_{\mathrm{c}}(X)$ consisting of perverse sheaves. Let $\mathcal{D}^+(X):= D^+(\Per(X))$ denote the bounded below derived category of the abelian category of perverse sheaves. Let $T$ be any torus acting on $X$, with quotient stack $X/T$. Now let $T$ be any torus acting on $X$ with quotient stack $X/T$. The shifted perverse $t$-structure on the $T$-equivariant derived category $\mathrm{D}^{+}_{\mathrm{c}}(X/T)$ is defined by setting 
$$
{}^{\mathfrak{p}}\!\tau^{\leq d} : = {}^{\mathfrak{p}}\!\tau^{\leq d - \dim(T)} \quad \text{and} \quad {}^{\mathfrak{p}}\!\tau^{\geq d} := {}^{\mathfrak{p}}\!\tau^{\geq d - \dim(T)}.
$$
Let $\Per(X/T)$ be the heart with respect to this shifted perverse structure and let $\mathcal{D}^+_c(X/T) := \mathcal{D}^+(\Per(X/T))$.  In particular, if $Y \subset X$ is a smooth connected component, then the shifted constant sheaf 
$$
\BZ^{\vir}_{Y/T}:= \BZ_{Y/T}[\dim Y] 
$$
is in  $\Per(X/T)$ (\cite[Lemma 3.3.12]{achar}). If $X=Y/G$ is a smooth quotient stack with an action of $T$ and $f\colon Y\rightarrow \mathbb{C}$ is a $T$-invariant regular function, then there is a perverse exact functor 
$$
\varphi_f\colon \mathcal{D}^+_{\mathrm{c}}(X/T)\rightarrow \mathcal{D}^+_{\mathrm{c}}(X/T)
$$
called the vanishing cycle functor.  We refer to \cite[Section 8.6]{schapira} for the definition and \cite[Proposition 2.13]{davison2020cohomological} for the essential properties that it satisfies.

\medskip 

The coarse moduli space $\mathcal{M}^T(Q)$ is a monoid with respect to the map
\[ 
\oplus: \mathcal{M}^T(Q) \times \mathcal{M}^T(Q) \rightarrow \mathcal{M}^T(Q)
\] 
of direct sum of semisimple quiver representations. This morphism is known to be finite \cite{svenrein}. It induces a monoidal product on the derived category of constructible sheaves $\mathcal{D}^+_c(\mathcal{M}^T(Q))$ defined by 
\begin{equation}
\label{eqn:box}
\mathcal{F}_1 \boxtimes_{\oplus} \mathcal{F}_2 := \oplus_{*}(\mathcal{F}_1 \boxtimes \mathcal{F}_2) 
\end{equation}
which preserves the abelian category of perverse sheaves $\mathrm{Perv}(\mathcal{M}^T(Q))$.

\medskip 

\subsection{Potentials} 
\label{sub:potential}

A \emph{potential} $W$ on $Q$ is any
$\BC$-linear combination of oriented cycles in $Q$ modulo cyclic permutations. If $W$ is a single cycle, we define for any $\alpha \in \edge$
\[ \frac{\partial W}{\partial \alpha} = \sum_{W = c\alpha c'} c'c \] 
and we extend this definition linearly to any $W$. The \emph{Jacobi algebra} of $Q$ is
$$
\text{Jac}(Q,W) = \BC Q \Big / \left( \frac{\partial W}{\partial \alpha} \right)_{\alpha \in \edge}
$$
where $\BC Q$ is the path algebra of $Q$.
Any potential $W$ induces a regular function 
\[ \Tr_{\bn}(W): \mathrm{Rep}_{\bn}(Q) \rightarrow \BC 
\] 
which is $\GL_{\bn}$ invariant due to the conjugation invariance of the trace. Therefore, we obtain an induced regular function on the moduli stack 
\[ \Tr_{\bn}(W): \mathfrak{M}_{\bn}(Q) \rightarrow \BC 
\] 
Whenever it will be clear from context, we will also use the notation $\Tr_{\bn}(W): \mathcal{M}_{\bn}(Q) \rightarrow \BC$ to denote the induced regular function on the coarse moduli space. If the potential is $T$ invariant (i.e. the sum of the torus weights of the arrows in every constituent cycle of $W$ is 0), then we obtain an induced regular function on the stack
$$
\mathrm{Tr}^T_{\bn}(W): \mathfrak{M}^T_{\bn}(Q) \rightarrow \mathbb{C}
$$ 
and similarly on the coarse moduli space $\mathcal{M}^{T}_{\bn}(Q)$.

\medskip

\begin{example}
\label{ex:cubic}

The most important potential considered in the present paper is the canonical cubic potential
\begin{equation}
\label{eqn:potential}
\tW = \sum_{(\alpha : i \rightarrow j) \in \edge} \Big( \omega_j \alpha \dalpha - \dalpha \alpha \omega_i \Big)
\end{equation}
of a tripled quiver $\tQ$.

\end{example}

\medskip 

\subsection{The loop-nilpotent CoHA}
\label{sub:coha} 

For any quiver $Q$, let  
\begin{equation}
\label{eqn:full}
\kh:= \bigoplus_{\bn \in \mathbb{N}^{I}} H(\mathfrak{M}_{\bn}^T(\tilde{Q}),\varphi_{\mathrm{Tr}(\tilde{W})}\mathbb{Z}^{\vir}_{\mathfrak{M}_{\bn}^T(\tilde{Q})}) 
\end{equation}
be the Kontsevich-Soibelman cohomological Hall algebra \cite{KS} of the tripled quiver with the canonical cubic potential \eqref{eqn:potential}. We refer to \cite{davison2022integrality} for the definition when $T$ is non-trivial. We abbreviate \eqref{eqn:full} as the CoHA. 
\medskip 

Via dimensional reduction, the algebra \eqref{eqn:full} is often identified (\cite[Appendix]{Ren:2015zua} and \cite{Yang_2019}) with the preprojective CoHA $\mathcal{A}^T_{\Pi_Q}$ that was defined for the Jordan quiver in \cite{schiffmann2012cherednik} and for arbitrary quivers in \cite{Yang_2018}. Here, $\Pi_Q$ denotes the moduli stack of representations of the doubled quiver $\dQ$ which satisfy the equality
$$
\sum_{(\alpha: i \rightarrow j) \in \edge} (\dalpha \alpha - \alpha \dalpha) = 0
$$
Our main object of study is the following modification of the CoHA \eqref{eqn:full}, where we impose the closed condition that the loops $\omega_i$ act by nilpotent operators. As we shall see in the proof of theorem \ref{thm: injection}, the two versions of CoHAs are isomorphic after localization (subject to some genericity assumptions on $T$).

\medskip

\begin{definition}
\label{def:loop nilpotent}

Let 
\begin{equation}
\label{eqn:closed embedding}
i_{\omega} : \mathfrak{M}^{T, \omega\emph{-nilp}}(\tilde{Q}) \subset \mathfrak{M}^{T}(\tilde{Q})
\end{equation}
denote the closed embedding of the reduced substack of $\tilde{Q}$-representations on which the loops $\omega_i$ act nilpotently for all $i \in I$. We call 
\begin{equation}
\label{eqn:loop nilpotent}
\ekha := \bigoplus_{\bn \in \mathbb{N}^{I}}  H(\mathfrak{M}^{T, \omega\emph{-nilp}}_{\bn}(\tilde{Q}), i^{!}_{\omega}\varphi_{\Tr(\tilde{W})}\mathbb{Z}^{\vir}_{\mathfrak{M}_{\bn}^T(\tilde{Q})}) 
\end{equation} 
the loop-nilpotent cohomological Hall algebra. The algebra structure on \eqref{eqn:loop nilpotent} is defined by the usual CoHA procedure, since loop-nilpotent representations form a Serre subcategory in the sense of \cite{davison2020cohomological}.

\end{definition}

\medskip 

\noindent Let $Q^+$ denote the quiver with the same vertex set as $Q$ but with added loops $\omega_i$ at each vertex. Finite-dimensional representations of this quiver are depicted as follows
$$
\text{Rep}_{\bn}(Q^+) = \Big\{\underset{\omega_i}{\underset{\circlearrowleft}{\BC}}^{n_i} \xrightarrow{\alpha} \underset{\omega_j}{\underset{\circlearrowleft}{\BC}}^{n_j}\Big\}
$$
for any $\bn = (n_i)_{i\in I} \in \nn$. Let $Z_{Q^{+}}$ be the closed subscheme of $\mathrm{Rep}(Q^{+})$ defined by representations $\rho$ which satisfy the equation
$$
\alpha \omega_i = \omega_j \alpha, \qquad \forall \alpha : i \rightarrow j
$$
Let $ Z^{\omega\text{-nilp}}_{Q^{+}} \subset Z_{Q^{+}}$ be the closed sub-scheme of representations where the linear operators corresponding to the loops $\omega_i$ are nilpotent for all $i \in I$. Let $Z^{\omega\text{-nilp}}_{Q^{+},\bn}$ and $Z_{Q^{+},\bn}$ denote the components of $\bn$-dimensional representations. We define 
\[ 
\mathcal{A}^{T, \omega\text{-nilp}}_{Z_{Q^{+}}} := \bigoplus_{\bn \in \mathbb{N}^I} H^{\mathrm{BM}}_{T \times \mathrm{GL}_{\bn}}(Z^{\omega\text{-nilp}}_{Q^{+},\bn}, \mathbb{Z})
\] 
and 
\[ \mathcal{A}^{T}_{Z_{Q^{+}}} := \bigoplus_{\bn \in \mathbb{N}^I} H^{\mathrm{BM}}_{T \times \mathrm{GL}_{\bn}}(Z_{Q^{+},\bn}, \mathbb{Z})
\]  
where the right-hand sides denote Borel-Moore homology groups.
The dimensional reduction theorem \cite[Theorem A.11]{davisonlocalized}\footnote{As stated, the dimensional reduction theorem of \cite{davisonlocalized} holds for vanishing cycles with coefficients in $\mathbb{Q}$, but the same proof works over $\mathbb{Z}$.}, invoked for the cut given by the arrows $\{\dalpha\}_{\alpha \in \edge}$ of the tripled quiver $\tilde{Q}$, gives isomorphisms of graded abelian groups
\begin{equation}
H(\mathfrak{M}^{T}_{\bn}(\tilde{Q}), \varphi_{\Tr(\tilde{W})}\mathbb{ Z}^{\vir}_{\mathfrak{M}_{\bn}^T(\tilde{Q})} ) \simeq H^{\mathrm{BM}}_{T \times \mathrm{GL}_{\bn}}(Z_{Q^{+},\bn}, \mathbb{Z})
\end{equation}
\begin{equation*}
\Rightarrow \quad \pi_1:\mathcal{A}^T_{\tilde{Q},\tilde{W}} \xrightarrow{\sim} \mathcal{A}^{T}_{Z_{Q^{+}}} 
\end{equation*} 
and 
\begin{equation} \label{eqn:dimensionreductionnilpotent}
H(\mathfrak{M}^{T,\omega\text{-nilp}}_{\bn}(\tilde{Q}), i^{!}_{\omega}\varphi_{\Tr(\tilde{W})}\mathbb{Z}^{\vir}_{\mathfrak{M}_{\bn}^T(\tilde{Q})} ) \simeq H^{\mathrm{BM}}_{T \times \mathrm{GL}_{\bn}}(Z^{\omega\text{-nilp}}_{Q^{+},\bn},\mathbb{Z})
\end{equation}
\begin{equation*}
\Rightarrow \quad \pi_2 \colon \mathcal{A}^{T,\omega\text{-nilp}}_{\tilde{Q},\tilde{W}} \xrightarrow{\sim} \mathcal{A}^{T,\omega\text{-nilp}}_{Z_{Q^{+}}} 
\end{equation*}
The isomorphisms above make $\mathcal{A}^{T}_{Z_{Q^{+}}}$ and $\mathcal{A}^{T,\omega\text{-nilp}}_{Z_{Q^{+}}}$ into associative algebras.

\medskip

\subsection{Injection to the shuffle algebra}
\label{sub:injection}

Since $i_{\omega}$ is a closed embedding, the adjunction and functorial properties of vanishing cycles imply that we have a map
\[ 
(i_{\omega})_{*}(i_{\omega})^{!}\varphi_{\Tr(W)} \mathbb{Z}^{\vir}_{\mathfrak{M}_{\bn}^T(\tilde{Q})} \rightarrow \varphi_{\Tr(W)} \mathbb{Z}^{\vir}_{\mathfrak{M}_{\bn}^T(\tilde{Q})} 
\] 
After taking global sections, we obtain a map 
\begin{equation} \label{eqn: pushforward loop nilpotent}
s: \kha \rightarrow \mathcal{A}^{T}_{\tilde{Q},\tilde{W}}
\end{equation}
which is an algebra homomorphism by the same argument as in \cite[Proposition 6.1.13]{J}. Since 
\[ 
Z^{\omega\text{-nilp}}_{Q^{+}} \subset Z_{Q^{+}} 
\] 
is a closed embedding, this induces an algebra homomorphism 
\[ 
\iota_Z: \mathcal{A}^{T,\omega\text{-nilp}}_{Z_{Q^+}} \rightarrow \mathcal{A}^T_{Z_{Q^+}}. 
\] 

\medskip
 
\begin{proposition} \label{dimensionreduction1}
    The following diagram commutes \[\begin{tikzcd}[cramped]
	{\mathcal{A}^{T, \omega\emph{-nilp}}_{\tilde{Q},\tilde{W}}} & {\mathcal{A}^{T}_{\tilde{Q},\tilde{W}}} \\
	{\mathcal{A}^{T,\omega\emph{-nilp}}_{Z_{Q^+}}} & {\mathcal{A}^T_{Z_{Q^+}}}
	\arrow["s", from=1-1, to=1-2]
	\arrow["\simeq"', from=1-1, to=2-1]
	\arrow["{\simeq }", from=1-2, to=2-2]
	\arrow["{\iota_Z}", from=2-1, to=2-2]
\end{tikzcd}\]
where the vertical isomorphisms are given by dimensional reduction. 
\end{proposition}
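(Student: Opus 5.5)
We will prove commutativity by reducing it to the naturality of the dimensional reduction isomorphism with respect to the closed embedding of the loop-nilpotent locus. Let $X = \mathrm{Rep}_{\bn}(\tQ)$, let $X^{\omega} = \mathrm{Rep}^{\omega\text{-nilp}}_{\bn}(\tQ)$, and consider the projection $p : X \rightarrow \mathrm{Rep}_{\bn}(Q^+)$ forgetting the arrows $\dalpha$; this is a trivial vector bundle, and $\Tr(\tW)$ is linear along its fibers. The loop-nilpotency condition only involves the $\omega_i$, which are retained by $p$. Hence $X^{\omega} = p^{-1}(Y)$, where $Y \subset \mathrm{Rep}_{\bn}(Q^+)$ is the closed ($\GL_{\bn}\times T$-stable) locus of nilpotent loops, and $Z^{\omega\text{-nilp}}_{Q^+,\bn} = Z_{Q^+,\bn} \cap Y$. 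This compatibility of the cut with the nilpotency condition is the key structural input.

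First step: recall that dimensional reduction \cite[Theorem A.11]{davisonlocalized} is a natural isomorphism $p_! \varphi_{\Tr(\tW)} p^* \mathcal{F} \simeq p_! p^* \mathcal{F}|_{Z}$ (up to shift and twist), constructed from base change and the contraction along the fibers, and it is compatible with base change along $Y \hookrightarrow \mathrm{Rep}_{\bn}(Q^+)$. Concretely, the isomorphism \eqref{eqn:dimensionreductionnilpotent} is obtained by applying $i_Y^!$ to the isomorphism on the full space and using $i_Y^! p_! \simeq p_! i^!_{\omega}$ (base change for the cartesian square $X^\omega = p^{-1}(Y)$) together with $i^!$ commuting with restriction to $Z$. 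I would state this as a lemma: the nilpotent dimensional reduction isomorphism equals $i_Y^!$ of the full one. If the cited reference constructs it differently, I would verify it agrees with this description, which is where I expect the main (bookkeeping) obstacle to lie.

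Second step: the map $s$ is $H$ of the counit $i_{\omega *} i_\omega^! \rightarrow \mathrm{id}$ applied to $\varphi_{\Tr(\tW)}\BZ^{\vir}$, and $\iota_Z$ is the Borel--Moore pushforward, i.e.\ $H$ of the counit $i_{Y*}i_Y^! \rightarrow \mathrm{id}$ applied to the dualizing complex of $Z_{Q^+,\bn}$. Since the dimensional reduction isomorphism is a natural transformation of functors and counits are natural, the square of sheaves on $\mathrm{Rep}_{\bn}(Q^+)/(\GL_{\bn}\times T)$ formed by $p_! i_{\omega*} i_\omega^! \varphi \simeq i_{Y*} i_Y^! p_!\varphi$ commutes with the counit maps; taking (compactly supported / Borel--Moore) global sections over the stack gives the commutativity of the diagram degree by degree in $\bn$. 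The equivariant setting is handled by the usual approximation of $B(\GL_{\bn}\times T)$ by finite-dimensional quotients, where all statements are compatible with the smooth pullbacks involved.

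Finally, note that commutativity only needs to be checked as maps of graded abelian groups in each $\bn$-component, since algebra structures on the bottom row are by definition transported through the vertical isomorphisms; thus no compatibility of multiplications needs to be verified here.
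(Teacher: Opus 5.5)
Your argument is the paper's proof made explicit. The paper's proof is a one-line appeal to functoriality of dimensional reduction \cite[Lemma A.4]{Yang_2019}. Your observation that the nilpotency condition involves only the $\omega_i$, which lie on the base of the cut along the $\dalpha$'s (so $X^{\omega}=p^{-1}(Y)$ and $Z^{\omega\text{-nilp}}_{Q^+,\bn}=Z_{Q^+,\bn}\cap Y$), together with naturality of the counits, is exactly what that functoriality amounts to here.

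One step needs fixing. The isomorphism $i_Y^! p_! \simeq p_! i_\omega^!$ is not a base-change identity for a cartesian square; the valid ones are $i_Y^! p_* \simeq p_* i_\omega^!$ and $i_Y^* p_! \simeq p_! i_\omega^*$. Your version holds here only because $\varphi_{\Tr(\tW)}$ is monodromic along the $\dalpha$-fibres, via the contraction lemma. There is also a mismatch of groups: $\kha$ is ordinary cohomology of $i_\omega^!\varphi_{\Tr(\tW)}\BZ^{\vir}$ and the target is Borel--Moore homology, whereas your $p_!$ version computes compactly supported cohomology. The clean route is to use the Verdier-dual ($p_*$) form of dimensional reduction, where $i_Y^!p_*\simeq p_*i_\omega^!$ is genuine base change. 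With that change the rest of your argument goes through unchanged.
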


\begin{proof} This follows by functoriality of dimensional reduction (Lemma A.4 in \cite{Yang_2019}). \end{proof}

\medskip

We furthermore have an algebra homomorphism  
\begin{equation} \label{eqn:shuffle morphism} 
\iota^{\prime} \colon \mathcal{A}^T_{\tilde{Q},\tilde{W}} \rightarrow \mathcal{A}^T_{\tilde{Q}}
\end{equation}
as defined in \cite{botta2023okounkovs} via the functorial properties of vanishing cycles. Here $\mathcal{A}^T_{\tilde{Q}}$ denote the cohomological Hall algebra of tripled quiver $\tilde{Q}$ without potential, defined in \cite{KS}. We shall be identifying $\mathcal{A}^T_{\tilde{Q}} $ with the shuffle algebra $\mathcal{V}^+$ via the isomorphism of algebras $\mathcal{A}^T_{\tilde{Q}} \simeq \mathcal{V}^+$ proved in \cite[Theorem 2]{KS}. Let  
\[ 
\iota^{\prime}_Z: \mathcal{A}^T_{Z_{Q^+}} \rightarrow \mathcal{A}^T_{Q^+} 
\] 
be the morphism of $\mathbb{Z}$ modules given by push-forward along the closed embedding $Z_{Q^+,\bn} \hookrightarrow \mathrm{Rep}_{\bn}(Q^{+})$ and identifying $H_T^{\mathrm{BM}}(\mathfrak{M}_{\bn}(Q^+),\mathbb{Z})$ with $H^T(\mathfrak{M}_{\bn}(Q^+),\mathbb{Z})$ (note that we are ignoring the cohomological degree shift for now). Similarly to Proposition \ref{dimensionreduction1}, we have the following result.

\medskip 

\begin{proposition} \label{dimensionreduction2}
    The following diagram commutes\[\begin{tikzcd}[cramped]
	{\mathcal{A}^T_{\tilde{Q},\tilde{W}} } & {\mathcal{A}^T_{\tilde{Q}} } \\
	{\mathcal{A}^T_{Z_{Q^{+}}}} & {\mathcal{A}^T_{Q^+}}
	\arrow["{\iota^{\prime}}", from=1-1, to=1-2]
	\arrow["{\simeq }", swap, from=1-1, to=2-1]
	\arrow["\simeq", swap, from=2-2, to=1-2]
	\arrow["{\iota^{\prime}_Z}", from=2-1, to=2-2]
\end{tikzcd}\]
where the left-most vertical isomorphism is dimensional reduction and the right-most isomorphism is the pullback along the map of affine spaces (modulo the same group) $p: \mathfrak{M}^T_{\bn}(\tilde{Q}) \rightarrow \mathfrak{M}^T_{\bn}(Q^+)$ that forgets the arrows $\dalpha$ of the tripled quiver.
\end{proposition}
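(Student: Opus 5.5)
Commutativity can be checked one dimension vector $\bn$ at a time, by factoring both routes through the stack $\mathfrak{M}^T_{\bn}(\tilde{Q})$ and its projection $p$. The top route is $\iota'$, which by \cite{botta2023okounkovs} comes from the natural map $\varphi_{\Tr(\tW)}\BZ^{\vir} \rightarrow \BZ^{\vir}[1]$ (or $\BZ^{\vir}$, depending on conventions) given by the functorial properties of vanishing cycles. Composing with the global sections on $\mathfrak{M}^T_{\bn}(\tilde{Q})$ gives $H^T(\mathfrak{M}_{\bn}(\tilde Q))$, which we identify with $H^T(\mathfrak{M}_{\bn}(Q^+))$ via $p^*$. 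The bottom route is dimensional reduction followed by pushforward along $Z_{Q^+,\bn} \hookrightarrow \mathrm{Rep}_{\bn}(Q^+)$.

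The first step is to recall how dimensional reduction is built, following \cite[Theorem A.11]{davisonlocalized} and \cite[Appendix A]{Yang_2019}. The potential is linear in the cut arrows $\dalpha$: $\Tr(\tW) = \sum_\alpha \Tr(\dalpha\, f_\alpha)$ with $f_\alpha = \alpha\omega_i - \omega_j\alpha$. Hence there is a natural isomorphism $p_!\varphi_{\Tr(\tW)}\BZ \cong p_!\BZ_{p^{-1}(Z)}$, up to shift and twist, where $p$ is the vector bundle projection with fibers $\prod_\alpha \mathrm{Hom}(\BC^{n_j},\BC^{n_i})$. Taking compactly supported cohomology, or dually Borel-Moore homology, gives the isomorphism to $H^{\mathrm{BM}}(Z_{Q^+,\bn})$. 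The second step is the key compatibility. The composite $\varphi_{\Tr(\tW)}\BZ \to \BZ$ is compatible under $p_!$ with the adjunction map $\BZ_{p^{-1}(Z)} \to \BZ_{\mathrm{Rep}(\tilde Q)}$, or its Verdier dual, the pushforward $i_*i^!\omega \to \omega$ for the dualizing complex. This is essentially \cite[Lemma A.4]{Yang_2019}, which states that dimensional reduction is natural for morphisms of vector bundles with compatible linear functions. Here the morphism is the closed embedding of the zero section of the function data, i.e. the inclusion $p^{-1}(Z) \subset \mathrm{Rep}(\tilde Q)$, together with the trivial potential on the target. The third step is to apply the Thom isomorphism for the vector bundle $p$, which identifies $H^{\mathrm{BM}}_{T\times\GL_{\bn}}(\mathrm{Rep}_{\bn}(\tilde Q))$ with $H^{\mathrm{BM}}_{T\times \GL_{\bn}}(\mathrm{Rep}_{\bn}(Q^+))$. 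Since both spaces are smooth, Poincaré duality turns this into $p^*$ on cohomology, and pushforward along $Z$ matches pushforward along $p^{-1}(Z)$ under the Thom isomorphism by base change.

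The main obstacle is the second step. I must check that the morphism $\iota'$ defined in \cite{botta2023okounkovs} agrees, up to the same twist and sign, with the natural transformation obtained from dimensional reduction. I would verify this on the level of sheaves on $\mathrm{Rep}_{\bn}(Q^+)$, using the explicit description $\varphi_{f}\BZ \to \BZ$ through the support of $\varphi$, and tracking the degree shifts by the rank of the bundle. I would do this over $\BZ$ as the footnote permits, and the result matches Proposition \ref{dimensionreduction1} in spirit. Equivariance for $T\times\GL_{\bn}$ follows from the usual approximation of classifying spaces by finite-dimensional ones. The cohomological degree shift is ignored as stated.
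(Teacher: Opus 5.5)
Your outline is essentially the paper's argument, unpacked. The paper proves the statement by invoking \cite[Proposition 4.5]{botta2023okounkovs}, which establishes exactly this compatibility for the cut by the loops $\omega_i$ (the canonical map on vanishing-cycle cohomology versus pushforward from the dimensionally reduced locus, followed by the pullback identification), and observes that the same argument applies to the cut by the arrows $\dalpha$. The step you flag as the main obstacle is precisely what that proposition supplies, so it is the reference to cite or adapt there, rather than \cite[Lemma A.4]{Yang_2019}, which is the functoriality statement the paper uses for the closed-embedding square of Proposition \ref{dimensionreduction1}. Also, on the compactly supported side the adjunction map is the restriction $\BZ_{\mathrm{Rep}(\tQ)} \to \BZ_{p^{-1}(Z)}$, whose Verdier dual is the pushforward you want.
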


\medskip

\begin{proof} An analogous statement is proved in Proposition 4.5 of \cite{botta2023okounkovs}, specifically involving dimensional reduction along the projection of affine spaces $p^{\prime}: \mathfrak{M}^T_{\bn}(\tilde{Q}) \rightarrow \mathfrak{M}^T_{\bn}(\dQ)$ that forgets the loops. The situation is perfectly analogous in our situation, where instead of the loops we forget the arrows $\dalpha$. \end{proof}

\medskip

\begin{theorem} \label{thm: injection}
Under the geometric assumption \eqref{eqn:assumption geometric}, the homomorphism 
\[ 
\iota^{\prime} \colon \khrate \rightarrow \mathcal{A}^T_{\tilde{Q},\mathrm{rat}} 
\] 
given by tensoring \eqref{eqn:shuffle morphism} by $\mathbb{Q}$ and the composition
\[ 
\iota: \mathcal{A}^{T,\omega\emph{-nilp}}_{\tilde{Q},\tilde{W}} \rightarrow \mathcal{A}^{T}_{\tilde{Q},\tilde{W}}  \rightarrow \mathcal{A}^T_{\tilde{Q}}
\]
are both injective. We conclude that the morphism \eqref{eqn: pushforward loop nilpotent} is also injective.

\end{theorem}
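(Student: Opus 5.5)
By Propositions \ref{dimensionreduction1} and \ref{dimensionreduction2}, I will transport both statements to Borel--Moore homology. The composition $\iota$ is identified with
$$
H^{\mathrm{BM}}_{T\times \GL_{\bn}}(Z^{\omega\text{-nilp}}_{Q^+,\bn}) \xrightarrow{\iota_Z} H^{\mathrm{BM}}_{T\times\GL_{\bn}}(Z_{Q^+,\bn}) \xrightarrow{\iota'_Z} H^{\mathrm{BM}}_{T\times\GL_{\bn}}(\mathrm{Rep}_{\bn}(Q^+)) \cong H_{T\times\GL_{\bn}}(\point).
$$
This is the pushforward along the closed embedding $Z^{\omega\text{-nilp}}_{Q^+,\bn}\hookrightarrow \mathrm{Rep}_{\bn}(Q^+)$. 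Likewise $\iota'$ is identified with $\iota'_Z$. In both cases the claim becomes injectivity of a proper pushforward in equivariant Borel--Moore homology into a free module over $H_{T\times\GL_{\bn}}(\point)$. The standard tool is localization: it suffices to show that the source is torsion-free over $H_{T}(\point)$ (or over $H_{T\times \GL_{\bn}}$) and that the pushforward becomes an isomorphism after inverting suitable equivariant parameters.

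\textbf{Step 1: fixed locus.} I will restrict to the maximal torus $T\times (\BC^*)^{|\bn|}\subset T\times\GL_{\bn}$. In $\mathrm{Rep}_{\bn}(Q^+)$ the weights are $-u_\alpha+z_{jb}-z_{ia}$ on arrows and $\hbar+z_{ia}-z_{ib}$ on loops; after passing to a generic one-parameter subtorus these are nonzero. So the fixed locus is the origin, which lies in $Z^{\omega\text{-nilp}}$. The geometric assumption \eqref{eqn:assumption geometric} ($\hbar\neq 0$, $u_\alpha\notin \BZ\hbar$ for loops) is exactly what makes the diagonal weights $\hbar$ and $-u_\alpha$ (for loops $\alpha$) nonvanishing, with the off-diagonal ones handled by $z$. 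By the localization theorem, after inverting the nonzero weights every pushforward to $\mathrm{Rep}_{\bn}(Q^+)$ becomes an isomorphism (both sides are localized $H(\text{pt})$). Hence the kernel of $\iota$ (resp.\ $\iota'_Z$) is torsion.

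\textbf{Step 2: no torsion (main obstacle).} It remains to show that $H^{\mathrm{BM}}_{T\times\GL_{\bn}}(Z^{\omega\text{-nilp}}_{Q^+,\bn},\BZ)$ is torsion-free (over $\BZ$-coefficients, and as a module over the equivariant parameters), and similarly rationally for $Z_{Q^+,\bn}$.

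For the nilpotent case I will stratify by the Jordan type of the loops $\omega_i$ and, within each Jordan type, by further data. The condition $\alpha\omega_i=\omega_j\alpha$ makes each fiber over a fixed pair of nilpotents a vector space $\mathrm{Hom}_{\BC[\omega]}$. Refining the stratification until these Hom-spaces have constant rank gives strata that are affine bundles over homogeneous spaces $\GL_{\bn}/\mathrm{Stab}$. Stabilizers of nilpotents are, up to unipotent radical, products of $\GL$'s, whose equivariant cohomology is free and concentrated in even degrees. The resulting long exact sequences then split, giving a free module, following the argument for the semi-nilpotent commuting stack (Schiffmann--Vasserot, Davison). I expect this to be the hardest part, since for general $Q$ the Hom-rank stratification is delicate and a purity/parity argument may be needed instead.

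For the non-nilpotent $Z_{Q^+}$ I will only work over $\BQ$. There I will use purity of $H^{\mathrm{BM}}(Z_{Q^+})$, via the isomorphism with the CoHA and the purity of the preprojective CoHA (\cite{davison2022integrality}), to get freeness over $H_T$, which suffices for $\iota'$.

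\textbf{Step 3: conclude.} Since $\iota=\iota'\circ s$ is injective, $s$ is injective. This gives the final claim, and the integral statement for $s$ follows without needing $\iota'$ integrally.
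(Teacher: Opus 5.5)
Your Steps 1 and 3 are fine. Step 2, however, does not prove what the argument needs, so there is a genuine gap.

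\textbf{The gap for $\iota$.} After Step 1 you only know that an element of the kernel of $\iota$ is killed by some nonzero element of $H^{T\times\GL_{\bn}}\point$. That element typically involves the $z_{ia}$, since the weights you invert are $\hbar+z_{ia}-z_{ib}$ and $-u_\alpha+z_{jb}-z_{ia}$. So you need torsion-freeness over $H^{T\times\GL_{\bn}}\point$; torsion-freeness over $H^{T}\point$ is not enough.

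The Jordan-type stratification cannot give this. By the very localization of Step 1, every stratum on which some $\omega_i\neq 0$ misses the fixed locus of the maximal torus. Hence its Borel--Moore homology $H^*(B\,\mathrm{Stab})$ is a \emph{torsion} $H^{T\times\GL_{\bn}}\point$-module. For instance, for the nilpotent cone $\mathcal N\subset\mathfrak{gl}_2$, the regular stratum is killed by $(z_1-z_2)^2-\hbar^2$.

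Parity vanishing splits the long exact sequences only as graded abelian groups, which yields $\BZ$-freeness and evenness. As modules, $H^{\mathrm{BM}}_{T\times\GL_{\bn}}(Z^{\omega\text{-nilp}}_{Q^+,\bn})$ is an iterated extension of one free module (from $\{\omega=0\}$) by torsion modules. Torsion-freeness of the total therefore depends on the extensions being non-split. For example, $[\mathcal N]$ is not torsion, since its pushforward to $\mathfrak{gl}_2$ is a nonzero multiple of $\hbar^2$, yet its restriction to the regular stratum is torsion. Purity or parity arguments cannot detect this.

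The paper does not argue through torsion-freeness here at all. It imports the injectivity of $\iota$ from the $K$-theoretic argument of \cite[Theorem 2.1]{JNCoulomb}. Torsion-freeness of $\kha$ over $H^{T\times\GL_{\bn}}\point$ is then \emph{deduced from} injectivity, not used to prove it.

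\textbf{The gap for $\iota'$.} Freeness of $\khrat$ over $R_{\BQ}$ (which purity gives, and which the paper also uses via \cite[Corollary 9.7]{davison2022integrality}) does not by itself suffice. It does not exclude elements killed by $z$-dependent classes, which is exactly what the localization argument must rule out.

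The paper bridges this in three steps:
\begin{enumerate}
\item Freeness over $R_{\BQ}$ gives an injection $\khrat\hookrightarrow\khrat\otimes_{R_{\BQ}}\field$.
\item Localization with respect to $T$ alone identifies $\khrat\otimes_{R_{\BQ}}\field\cong\kharat\otimes_{R_{\BQ}}\field$.
\item $\kharat$ is torsion-free over $H^{T\times\GL_{\bn}}\point\otimes\BQ$ precisely because $\iota$ is already known to be injective.
\end{enumerate}
Only after this does your maximal-torus localization apply to $\iota'_Z$.

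So the logical order is forced. Injectivity of $\iota$ must come first, by an argument other than stratification plus splitting. Injectivity of $\iota'$ comes second, and injectivity of $s$ then follows as in your Step 3.
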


\medskip 

\begin{proof} The injectivity of $\iota$ follows the same proof as in the $K$-theoretic case treated in \cite[Theorem 2.1]{JNCoulomb}, and we don't repeat the argument. The injectivity of $\iota^\prime$ over $\BQ$ is proven under a slightly stronger assumption in \cite{davison2022integrality}, but our geometric assumptions are enough. This is because for an arbitrary torus $T$, it is proved in \cite[Corollary 9.7]{davison2022integrality} that $\khrat$ is a free $R_{\mathbb{Q}}$-module. Thus the natural map
\begin{equation} 
\label{eqn:injection1}
\khrat \hookrightarrow  \khrat \otimes_{\ring_{\mathbb{Q}}} \field
\end{equation}
(where $\field = \text{Frac}(\ring)$) is injective. However, by the equivariant localization theorem, we have an isomorphism
\begin{equation} 
\label{eqn:isomorphism1}
\khrat \otimes_{\ring_{\mathbb{Q}}} \mathbb{\field} \simeq \kharat \otimes_{\ring_{\mathbb{Q}}} \field
\end{equation}
Since $\iota$ is injective and $\mathcal{A}^T_{\tilde{Q}}$ is a torsion-free $H^{T \times \GL_\bn}(\cdot)$ module, it follows that 
\begin{equation} 
\label{eqn:tf 1}
\kha \text{ is a torsion-free }H^{T \times \GL_\bn}(\cdot)\text{-module}
\end{equation}
By \eqref{eqn:injection1} and \eqref{eqn:isomorphism1}, it therefore follows that 
\begin{equation} 
\label{eqn:tf 2}
\khrat \text{ is also torsion-free over }H^{T \times \mathrm{GL}_{\bn}}(\cdot) \otimes_{\mathbb{Z}} \mathbb{Q}
\end{equation}
By Proposition \ref{dimensionreduction2}, the morphism $\iota^{\prime}$ is the pushforward of the closed embedding $Z_{Q^+} \hookrightarrow \mathrm{Rep}(Q^+)$. But the $T \times \mathrm{GL}_{\bn}$ fixed points of $Z_{Q^+}$ and $\mathrm{Rep}(Q^+)$ are the same, so the localization theorem implies that $\iota^{\prime}$ is an isomorphism after localization. Thus the kernel of morphism $\iota^{\prime}$ is contained inside the torsion submodule of 
$$
\mathcal{A}^T_{Z_{Q^+},\text{rat}} \simeq \khrat,
$$
which is trivial. Hence $\iota'$ is injective. \end{proof}

\medskip 

\subsection{The image of $\iota$}
\label{sub:image}

Let $[\omega=0]_{\bn}$ denote the fundamental class of the closed subscheme of $Z^{\omega\text{-nilp}}_{Q^+,\bn}$ defined by the equations $\{\omega_i=0\}_{i \in I}$. We define
\begin{equation}
\label{eqn:def epsilon}
\varepsilon_{\bn,g} = g \cdot [\omega=0]_{\bn}  \in H^{BM}_{T \times \GL_{\bn}}(Z_{Q^+,\bn}^{\omega\text{-nilp}},\mathbb{Z}) 
\end{equation}
for any $g \in H^{T \times \GL_\bn}(\cdot)$, where $\cdot$ denote the usual action of $H^{T \times GL_{\bn}}(\cdot)$ on the Borel-Moore homology of any $T \times GL_{\bn}$ variety. We may regard 
$$
\varepsilon_{\bn,g} \in \kha
$$
via the isomorphism \eqref{eqn:dimensionreductionnilpotent}. The following is straightforward.

\medskip

\begin{lemma}
For any $\bn \in \nn$ and any $g \in \ring$, we have 
\begin{equation}
\label{eqn:koszul}
\iota(\varepsilon_{\bn,g}) =g(z_{i1},\dots,z_{in_i})_{i \in I} \prod_{i \in I} \prod_{1 \leq a , b \leq n_i} \left(z_{ib}-z_{ia}+\hbar\right) = e_{\bn,g}
\end{equation}
where the latter equality involves the identification $\kzero \cong \CV^+$. 

\end{lemma}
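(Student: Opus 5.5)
We compute $\iota(\varepsilon_{\bn,g})$ by pushing the class through the commutative squares of Propositions \ref{dimensionreduction1} and \ref{dimensionreduction2}, where everything becomes a Borel--Moore pushforward along closed embeddings into the affine space $\mathrm{Rep}_{\bn}(Q^+)$. Since all the maps involved are linear over $H^{T\times\GL_\bn}(\cdot)$, it suffices to treat $g=1$; the general case then follows by multiplying by $g(z_{i1},\dots,z_{in_i})_{i\in I}$, with $z_{ia}$ the Chern roots of the tautological bundles.

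Concretely, $\iota$ corresponds to $\iota'_Z\circ\iota_Z$. This map sends the class $[\omega=0]_\bn$ to the fundamental class of the closed subvariety
$$
L_\bn=\{\omega_i=0\ \forall i\}\subset \mathrm{Rep}_\bn(Q^+),
$$
viewed in $H^{BM}_{T\times\GL_\bn}(\mathrm{Rep}_\bn(Q^+))\cong H^{T\times\GL_\bn}(\cdot)$. Note that $L_\bn$ lies inside $Z^{\omega\text{-nilp}}_{Q^+,\bn}$, since the commutation relations $\alpha\omega_i=\omega_j\alpha$ hold trivially when $\omega=0$. Moreover $L_\bn$ is a linear subspace, cut out transversally by the vanishing of the coordinates of the loops $\omega_i\in\mathrm{End}(\BC^{n_i})$. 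Its class under the Thom isomorphism is therefore the equivariant Euler class of the normal bundle, namely $\bigoplus_i \mathrm{End}(\BC^{n_i})$ with $T$ acting by the character $e^{\hbar}$.

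The weights of $\mathrm{End}(\BC^{n_i})$ are $z_{ib}-z_{ia}$ for $1\le a,b\le n_i$, each shifted by the loop's equivariant parameter. This gives the Euler class $\prod_i\prod_{a,b}(z_{ib}-z_{ia}+\hbar)$, which is exactly $e_{\bn,1}$ in \eqref{eqn:special}.

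The only real obstacle is bookkeeping of conventions. First, one must check that the sign of the weight is $+\hbar$ rather than $-\hbar$: the loop has character $e^{\hbar}$, while the other arrows carry $e^{-u_\alpha}$. Second, the identification $\mathcal{A}^T_{\tQ}\cong\CV^+$ of \cite{KS}, the pullback $p^*$ in Proposition \ref{dimensionreduction2}, and the suppressed cohomological shifts should all be normalized so that no extra factor arises. In particular, the Euler class of the $\dalpha$-directions, which are forgotten by $p$, must not appear; this is guaranteed by the dimensional reduction compatibility in Proposition \ref{dimensionreduction2}. We would verify these conventions on the simplest case $\bn=\bs^i$, where the answer must be $\hbar$ (consistent with the factor $\hbar^{n_i}$ in \eqref{eqn:factor}), and this fixes the sign.
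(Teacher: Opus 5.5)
Your proposal is correct and is precisely the computation the paper has in mind when it calls the lemma straightforward: via the squares of Propositions \ref{dimensionreduction1} and \ref{dimensionreduction2}, $\iota(\varepsilon_{\bn,1})$ becomes the class of the linear subspace $\{\omega_i=0\}_{i\in I}\subset \mathrm{Rep}_{\bn}(Q^+)$. That class is the Euler class $\prod_{i}\prod_{a,b}(z_{ib}-z_{ia}+\hbar)$ of the $\omega$-directions, and $g$ factors out by linearity. The only quibble is that your proposed check at $\bn=\bs^i$ cannot fix the sign of $\hbar$, since divisibility by $\hbar^{n_i}$ is blind to it; the sign is instead fixed directly by the loop's character $e^{\hbar}$, consistently with the factor $\frac{x-\hbar}{x}$ of $\zeta_{ii}(x)$ at $x=z_{ia}-z_{ib}$ being the ratio of Euler classes $\frac{z_{ib}-z_{ia}+\hbar}{z_{ib}-z_{ia}}$.
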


\medskip 

\noindent The following result is the cohomological version of of \cite[Proposition 2.2]{JNCoulomb}.

\medskip 

\begin{proposition}
\label{prop:yu zhao}

Under Assumption \soft of \eqref{eqn:assumption soft}, the injection $\iota$ sends 
$$
\ekha \xrightarrow{\sim} \CS^+
$$

\end{proposition}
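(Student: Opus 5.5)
We prove the two inclusions $\CS^+ \subseteq \text{Im }\iota$ and $\text{Im }\iota \subseteq \CS^+$ separately, and then invoke Theorem \ref{thm: injection} for injectivity, so that $\iota$ becomes an isomorphism onto $\CS^+$.

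\textbf{First inclusion: $\CS^+ \subseteq \text{Im }\iota$.} By Theorem \ref{thm:shuffle int}, $\CS^+$ is generated as an $\ring$-algebra by the elements $e_{\bn,g}$ of \eqref{eqn:special}. By Lemma \eqref{eqn:koszul}, each $e_{\bn,g}$ equals $\iota(\varepsilon_{\bn,g})$. Since $\iota$ is an $\ring$-algebra homomorphism, its image is a subalgebra, so it contains $\CS^+$. One point needs care: Theorem \ref{thm:shuffle int} is stated under the geometric assumption \eqref{eqn:assumption geometric}. I would check that Assumption \soft implies it. The second bullet with $x=1$, $y=0$ excludes $u_\alpha \in \BZ_{<0}\hbar$, and the first bullet applied to a loop $\alpha$, with $\beta=\alpha$ or $\beta=\dalpha$, excludes $u_\alpha \in \BZ_{\ge 0}\hbar$. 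If this deduction fails, I would instead adjust the inductive Claim \ref{claim} directly under Assumption \soft.

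\textbf{Second inclusion: $\text{Im }\iota \subseteq \CS^+$.} This is the main obstacle. I would follow the strategy of \cite[Proposition 2.2]{JNCoulomb}, translated from $K$-theory to Borel-Moore homology.

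Fix an $I$-partition $P$ as in \eqref{eqn:composition}. Consider the $T\times\GL_\bn$-equivariant structure on $Z^{\omega\text{-nilp}}_{Q^+,\bn}$. The specialization $\text{Spec}_P$ in \eqref{eqn:specialization} corresponds to restricting equivariant parameters to the subtorus $T \times T_P \subset T\times \GL_\bn$. Here $T_P = \prod_{i,a}\BC^*$ sits inside $\GL_\bn$ cocharacter-wise, so that on block $(i,a)$ it acts through weights $x_{ia}, x_{ia}+\hbar,\dots,x_{ia}+(n_i^{(a)}-1)\hbar$ (using the $\hbar$-twist from $T$). On each block, the nilpotent $\omega_i$ is forced to be a principal Jordan block shifting these weights by $\hbar$.

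Concretely, the Chern-root formula is $\iota(c) = \int_{Z} c \cdot e(\text{normal})$, computed via $\iota'_Z$. To verify divisibility of $\text{Spec}_P(\iota(c))$ by \eqref{eqn:factor}, I would localize to the fixed locus of this twisted torus inside $Z^{\omega\text{-nilp}}$. That fixed locus consists of representations where each $\omega$ is a regular nilpotent on each block and each $\alpha$ intertwines, hence is a Toeplitz-type map between the Jordan blocks.

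The linear factors $x_{jb}-x_{ia}+c\hbar-u_\alpha$ then appear as weights of the Euler class of the normal directions to the commuting locus $\alpha\omega_i=\omega_j\alpha$ that are \emph{not} cancelled. The multiplicity $\chi_{n_i^{(a)},n_j^{(b)}}(c)$ counts exactly these directions, by the dimension count of Hom between Jordan blocks (cf.\ \eqref{eqn:explicit chi}). The prefactor $\hbar^{n_i}\prod n_i^{(a)}!$ comes from the $\omega$-directions together with integrality of the pushforward from the Springer-type resolution of the nilpotent cone over $\BZ$.

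Assumption \soft is used to guarantee that the remaining weights appearing in the denominators are coprime to the factors in \eqref{eqn:factor}, so divisibility in $\ring[x]^{\sym}$ survives. Here I would rely on the computations of \cite{JNCoulomb} and \cite{Integral}, replacing $K$-theoretic weights with their cohomological counterparts.
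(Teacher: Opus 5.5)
Your proposal follows the paper's proof: $\CS^+ \subseteq \text{Im }\iota$ because $\CS^+$ is generated by the elements $e_{\bn,g} = \iota(\varepsilon_{\bn,g})$ (Theorem \ref{thm:shuffle int} together with \eqref{eqn:koszul}), and $\text{Im }\iota \subseteq \CS^+$ comes from transporting the localization argument of \cite[Proposition 2.2]{JNCoulomb} from $K$-theory to Borel--Moore homology, which is exactly what the paper does. Two small corrections, neither of which affects the argument:
\begin{itemize}
\item In the second bullet of \eqref{eqn:assumption soft}, the choice $(x,y)=(1,0)$ excludes $u_\alpha \in \BZ_{>0}\hbar$, not $\BZ_{<0}\hbar$. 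The choice $(x,y)=(0,1)$ excludes $u_\alpha \in \BZ_{\leq 0}\hbar$. So that bullet alone already gives \eqref{eqn:assumption geometric}.
\item Your description of the fixed locus is inaccurate. For the torus $T\times T_P$ realizing $\text{Spec}_P$, the fixed part of every $\alpha$ vanishes, and each $\omega_i$ is only a blockwise shift, not necessarily regular nilpotent. So your heuristic should not be taken literally beyond what \cite{JNCoulomb} actually proves.
\end{itemize}
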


\medskip 

\begin{proof} \emph{(sketch):} Recall the injection $\iota : \kha \hookrightarrow \CV^+$. We claim that
$$
\text{Im }\iota \subseteq \CS^+
$$
by adapting to cohomology the $K$-theoretic argument given in \cite[Proposition 2.2]{JNCoulomb} (itself inspired by \cite{Zhao}). However, we also have the opposite inclusion
$$
\text{Im }\iota \supseteq \CS^+
$$
which is due to the fact (Theorem \ref{thm:shuffle int}) that $\CS^+$ is generated by $\iota(\varepsilon_{\bn,g})$. \end{proof} 

\medskip 

\noindent The following result is the geometric counterpart of Proposition \ref{prop:spec is comm}.

\medskip 

\begin{corollary} \label{nilCoHAcommute}
Let $T^{\prime} \subset T$ be a subtorus which acts trivially on the loops $\{\omega_i\}_{i \in I}$ of the tripled quiver. The restricted loop-nilpotent cohomological Hall algebra 
\begin{equation}
\label{eqn:iso cor}
\mathcal{A}^{T^{\prime},\omega\emph{-nilp}}_{\tilde{Q},\tilde{W}} \cong \ekha \otimes_{R} R'
\end{equation}
(let $R = \BZ[\emph{Lie}(T)] \twoheadrightarrow \BZ[\emph{Lie}(T')] = R'$ be given by $\hbar \mapsto 0$) is supercommutative.  
\end{corollary}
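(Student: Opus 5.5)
The proof splits into two parts: the isomorphism \eqref{eqn:iso cor}, and then supercommutativity, which we deduce from the shuffle model (Proposition \ref{prop:yu zhao}) together with Proposition \ref{prop:spec is comm}.

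\emph{The isomorphism \eqref{eqn:iso cor}.} By dimensional reduction \eqref{eqn:dimensionreductionnilpotent}, both sides are computed by equivariant Borel--Moore homology of $Z^{\omega\text{-nilp}}_{Q^+,\bn}$, for $T'\times\GL_\bn$ and $T\times \GL_\bn$ respectively. We would use that $\kha$ is a free $R$-module. This should follow from the shuffle description $\kha\cong \CS^+$: we expect $\CS^+$ to be free over $R$ in each graded piece, since it is cut out by divisibility conditions and admits a filtration by the kernels $\CS^+_P$ of Theorem \ref{thm:shuffle int}, whose subquotients embed as $\text{Spec}_P$-images. Alternatively, one could argue geometrically via a paving or purity of $Z^{\omega\text{-nilp}}$ coming from the nilpotent loops. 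Once we have freeness or equivariant formality, the standard base change $H^{BM}_{T'\times\GL_\bn}=H^{BM}_{T\times\GL_\bn}\otimes_R R'$ holds. It is compatible with the Hall product, because restricting the torus commutes with all the pullbacks and pushforwards defining the multiplication. Here $T'$ is the subtorus on which $\hbar$ restricts to $0$, so $R\to R'$ indeed kills $\hbar$.

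\emph{Supercommutativity.} By Proposition \ref{prop:yu zhao}, $\kha\otimes_R R'\cong \CS^+\otimes_R R' = \bar{\CS}^+$ with $\bar{\ring}=R'$. Proposition \ref{prop:spec is comm} then gives
$$
\bar E*\bar E'=(-1)^{(\bn,\bn')'}\bar E'*\bar E,
$$
which is exactly the claimed supercommutativity. This uses that the cohomological grading parity on $\khan$ matches $(\bn,\bn')'$ through the degree shift by $(\bn,\bn)'$. To check this, we would verify that the sign $(-1)^{\sum -n_jn'_i}$ versus $(-1)^{\sum n_in'_j}$ in \eqref{eqn:hbar comm} agrees with the Koszul sign convention of the CoHA.

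\emph{Main obstacle.} The step I expect to be hardest is the base change isomorphism \eqref{eqn:iso cor}, specifically showing that no Tor terms appear when restricting from $T$ to $T'$. I would first try to deduce freeness of $\CS^+$ over $R$ from the triangular structure of the $\text{Spec}_P$ filtration. Failing that, I would use the geometric purity of the Borel--Moore homology of the loop-nilpotent locus.
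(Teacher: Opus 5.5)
Your two-step structure is the paper's. You pass from $T$ to $T'$ to obtain \eqref{eqn:iso cor}, identify $\kha\otimes_R R'$ with $\CS^+/\hbar\CS^+$ via Proposition \ref{prop:yu zhao}, and conclude with Proposition \ref{prop:spec is comm}.

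The one step you leave open is the base change, and your first suggested route does not close it as stated. The subquotients of the filtration by the $\CS^+_P$ do embed via $\text{Spec}_P$ into polynomial rings over $R=\BZ[\text{Lie}(T)]$. But this does not make them free, since submodules of free modules over a polynomial ring in several variables need not be free. You would have to determine those images exactly. The paper instead imports freeness of $\kha$ over $R$ from the argument of \cite[Proposition 6.9]{JNCoulomb}.

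In fact you need much less than freeness. Write $T=T'\times\BC^*$ with $\hbar$ the character of the second factor, and set $Z=Z^{\omega\text{-nilp}}_{Q^+,\bn}$. Then $H^{BM}_{T'\times\GL_{\bn}}(Z)=H^{BM}_{T\times\GL_{\bn}}(Z\times\BC^*)$. The localization sequence for $Z\times\{0\}\subset Z\times\BC$ has first map given by the Euler class $\pm\hbar$ of the normal bundle. So, up to degree shifts, it reads
\[
\cdots\rightarrow H^{BM}_{T\times\GL_{\bn}}(Z)\xrightarrow{\ \pm\hbar\ }H^{BM}_{T\times\GL_{\bn}}(Z)\rightarrow H^{BM}_{T'\times\GL_{\bn}}(Z)\rightarrow H^{BM}_{T\times\GL_{\bn}}(Z)\xrightarrow{\ \pm\hbar\ }\cdots
\]
Hence \eqref{eqn:iso cor} holds as soon as $\hbar$ is a nonzerodivisor on $\kha$. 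This is immediate from Theorem \ref{thm: injection}: $\iota$ is an injective $R$-linear map into $\CV^+$, which is free over $R$ (cf. \eqref{eqn:tf 1}). No paving or purity input is needed.

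Separately, drop the planned Koszul-sign check, because it cannot succeed as phrased. The shift $(\bn,\bn)'=-2\sum_{(\alpha:i\rightarrow j)\in\edge}n_in_j$ is even, and $\kha\cong\CS^+\subset\CV^+$ lives entirely in even cohomological degrees. So the cohomological Koszul sign is trivial, whereas $(-1)^{(\bn,\bn')'}$ is not. For instance, for $Q$ of type $A_2$, the elements $e_{\bs^1,1}=\hbar$ and $e_{\bs^2,1}=\hbar$ anticommute and have nonzero product modulo $\hbar\CS^+$. \emph{Supercommutative} in the statement just means the sign rule $\bar E*\bar E'=(-1)^{(\bn,\bn')'}\bar E'*\bar E$ of Proposition \ref{prop:spec is comm}, so that proposition already finishes the argument.
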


\medskip 

\begin{proof} By the argument of \cite[Proposition 6.9]{JNCoulomb}, $\mathcal{A}^{T, \omega\text{-nilp}}_{\tilde{Q},\tilde{W}}$ is a free $R$-module, which implies the isomorphism \eqref{eqn:iso cor}. By Propositions \ref{prop:spec is comm} and \ref{prop:yu zhao}, the $\hbar = 0$ specialization of the $\kha$ is supercommutative, and therefore so is $\mathcal{A}^{T^{\prime},\omega\text{-nilp}}_{\tilde{Q},\tilde{W}}$. \end{proof}

\medskip 

\subsection{Cohomological integrality}
\label{sub:cohomological integrality} 

In this section, we explain the cohomological integrality theorem for tripled quivers with the canonical cubic potential. Since the decomposition theorem fails for sheaves with coefficients in $\mathbb{Z}$, we shall be working with sheaves with coefficients in $\mathbb{Q}$. The constructions in Section \ref{sub:vanishing cycles} generalize to $\mathbb{Q}$-coefficients. We refer to \cite{davison2020cohomological} for proofs of all the subsequent statements. In general, we will use the subscript ``rat'' to indicate tensoring over $\BQ$, e.g.
$$
\kharat = \kha \otimes_{\BZ} \BQ
$$
Recall the semisimplification morphism $\JH^T_{\bn}$ of \eqref{eqn:semisimplification}. Although $\JH^T_{\bn}$ is not proper, it was proved in \cite[Lemma 4.1]{davison2020cohomological} (see \cite{kinjo2024decompositiontheoremgoodmoduli} for a more general statement) that we can still apply the decomposition theorem.
One calls 
\[ 
\mathcal{BPS}_{\tilde{Q},\tilde{W},\bn}^T :=\ptau^{\leq 1}\JH^T_{\bn,*} \varphi_{\Tr(\tilde{W})} \mathbb{Q}^{\vir}_{\mathfrak{M}^T(\tilde{Q})}[1]  \in \mathcal{D}^+_c(\mathcal{M}^T_{\bn}(\tilde{Q}))
\]
the \emph{BPS sheaf}. We write
$$
\mathcal{BPS}_{\tilde{Q},\tilde{W}}^T  = \bigoplus_{\bn \in \nn \backslash \b0} \mathcal{BPS}_{\tilde{Q},\tilde{W},\bn}^T 
$$
The natural transformation $\ptau^{\leq 1} \rightarrow \mathrm{id}$ gives a canonical split morphism 
\begin{equation} 
\label{eqn:inclusionofBPSsheaf}
\mathcal{BPS}^T_{\tilde{Q},\tilde{W},\bn}[-1] \rightarrow  \JH^T_{\bn,*} \varphi_{\Tr(\tilde{W})} \mathbb{Q}^{\vir}_{\mathfrak{M}^T(\tilde{Q})}.
\end{equation}
Let \begin{equation}
(\mathrm{id},\mathrm{det}): \mathfrak{M}^T_{\bn}(\tilde{Q}) \rightarrow \mathfrak{M}^T_{\bn}(\tilde{Q}) \times \mathrm{B}\mathbb{C}^*
\end{equation} be the morphism defined by taking the determinant line bundle (see \cite[Section 3.4]{DW}). Taking pullback gives a morphism of sheaves
\begin{equation} \label{actionofBGM}
\JH^T_{\bn,*} \varphi_{\Tr(\tilde{W})} \mathbb{Q}^{\vir}_{\mathfrak{M}^T_{\bn}(\tilde{Q})} \otimes H(\mathrm{B}\mathbb{C}^*,\mathbb{Q}) \rightarrow \JH^T_{\bn,*} \varphi_{\Tr(\tilde{W})} \mathbb{Q}^{\vir}_{\mathfrak{M}^T_{\bn}(\tilde{Q})}.   
\end{equation}
Tensoring the morphism \eqref{eqn:inclusionofBPSsheaf} with $H(\mathrm{B}\mathbb{C}^*,\mathbb{Q})$ and composing with \eqref{actionofBGM} gives 
\begin{equation} 
\label{eqn:inclusionofBPSusheaf}
\mathcal{BPS}_{\tilde{Q},\tilde{W},\bn}[-1] \otimes H(\mathrm{B}\mathbb{C}^*,\mathbb{Q}) \rightarrow  \JH^T_{\bn} \varphi_{\Tr(\tilde{W})} \mathbb{Q}^{\vir}_{\mathfrak{M}^T_{\bn}(\tilde{Q})} 
\end{equation} 
In \eqref{eqn:box}, we explained that there is a symmetric monoidal product $\boxtimes_{\oplus}$ on the derived category of constructible sheaves $\mathcal{D}^+_c(\mathcal{M}^T(\tilde{Q}))$. This allows us to lift the morphism \eqref{eqn:inclusionofBPSusheaf} to 
\begin{equation} 
\label{eqn:rintegralitytheorem}
\mathrm{Sym}_{\boxtimes_{\oplus}}\left(\bigoplus_{\bn \in \nn \backslash \b0} \mathcal{BPS}^T_{\tilde{Q},\tilde{W},\bn}[-1] \otimes H(\mathrm{B}\mathbb{C}^*,\mathbb{Q})\right)  \rightarrow \JH^T_*\varphi_{\Tr(\tilde{W})} \mathbb{Q}^{\vir}_{\mathfrak{M}^T(\tilde{Q})}
\end{equation}
where $\JH^T = \oplus_{\bn \in \nn} \JH^T_{\bn}$. The following result is called the \emph{relative cohomological integrality theorem}, and it was proved in \cite[Theorem A]{davison2020cohomological}.

\medskip 

\begin{theorem}

The morphism \eqref{eqn:rintegralitytheorem} is an isomorphism. 

\end{theorem}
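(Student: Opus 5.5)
This is the relative cohomological integrality theorem of \cite[Theorem A]{davison2020cohomological}, specialized to the tripled quiver with the canonical cubic potential and equivariant for $T$. I would therefore not reprove it from scratch. Instead I would check that the hypotheses of that result hold here and that the $T$-equivariant version follows from the non-equivariant one.

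\emph{Step one: set up the algebraic structure.} Recall that $\varphi_{\Tr(\tW)}$ commutes with proper pushforward and satisfies the Thom--Sebastiani isomorphism \cite[Proposition 2.13]{davison2020cohomological}. With these, the CoHA multiplication upgrades to a relative multiplication
\[
\JH^T_*\varphi\BQ^{\vir}\boxtimes_\oplus \JH^T_*\varphi\BQ^{\vir}\rightarrow \JH^T_*\varphi\BQ^{\vir}
\]
in $\mathcal{D}^+_c(\mathcal{M}^T(\tQ))$. Since $\tQ$ is symmetric and the Euler form is symmetric, the sign twist makes this product supercommutative at the level of perverse associated graded pieces. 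This is what makes $\mathrm{Sym}_{\boxtimes_\oplus}$ the correct source for \eqref{eqn:rintegralitytheorem}, and it also makes the morphism well defined.

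\emph{Step two: show the perverse filtration starts in degree $1$.} I would show that $\JH^T_*\varphi_{\Tr(\tW)}\BQ^{\vir}$ is concentrated in perverse degrees $\geq 1$, so that $\mathcal{BPS}$ is its lowest perverse piece. Although $\JH$ is not proper, one applies the decomposition theorem via the approximation by proper maps given in \cite[Lemma 4.1]{davison2020cohomological}. Since $\varphi$ is perverse exact, the support and degree estimates of Meinhardt--Reineke for $\JH_*\BQ^{\vir}$ transfer to $\JH_*\varphi\BQ^{\vir}$.

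\emph{Step three: prove the isomorphism, and identify the main obstacle.} The main obstacle is showing that \eqref{eqn:rintegralitytheorem} is an isomorphism, not just a split injection. I would first treat the case without potential. There the statement is the Meinhardt--Reineke theorem: compare the two sides stalkwise over $\mathcal{M}(\tQ)$, and reduce via the \'etale-local structure of $\mathcal{M}$ near a semisimple point (Luna slice) to a neighbourhood of $0$ for an Ext-quiver. On that neighbourhood the comparison becomes a graded-dimension count, using the plethystic identity for the symmetric quiver. Next I would apply $\varphi_{\Tr(\tW)}$ to both sides. Because $\varphi$ commutes with $\JH_*$ (up to the approximation), with $\oplus_*$ (which is finite), and with $\ptau$ (by exactness), the isomorphism transfers to the case with potential.

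\emph{Step four: pass to the $T$-equivariant statement.} Here I would approximate $BT$ by finite-dimensional spaces. Since the shifted perverse $t$-structure is compatible with smooth pullback, isomorphy can be checked after forgetting $T$.
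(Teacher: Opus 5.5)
Your proposal matches the paper, which gives no argument of its own and simply invokes \cite[Theorem A]{davison2020cohomological}. Your extra remark that the $T$-equivariant statement on $\mathcal{M}^T(\tQ)$ follows from the non-equivariant one is correct and fills in a point the paper leaves implicit: pullback along the smooth surjection $\mathcal{M}(\tQ)\rightarrow\mathcal{M}^T(\tQ)$ is conservative and, for the shifted perverse $t$-structure, commutes with $\ptau^{\leq 1}$, $\JH_*$, $\oplus_*$ and $\varphi$. Be aware, though, that your Step three gloss is not itself a proof, since matching stalk dimensions (or pushing an abstract Meinhardt--Reineke isomorphism through $\varphi$) does not show that the specific morphism \eqref{eqn:rintegralitytheorem} is an isomorphism; read that step as a citation of Davison--Meinhardt rather than an independent argument.
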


\medskip

\noindent The space of global sections 
\[ 
\mathfrak{g}^T_{\tilde{Q},\tilde{W}}:= H(\mathcal{M}^T(\tilde{Q}), \mathcal{BPS}_{\tilde{Q},\tilde{W}}[-1]) \hookrightarrow H(\mathcal{M}^T(\tilde{Q}),\JH_{*}\varphi_{\Tr(\tilde{W})} \mathbb{Q}^{\vir}_{\mathfrak{M}^T(\tilde{Q})})= \khrat
\] 
is referred to as the \emph{BPS Lie (super)algebra}. Let us explain this name: it was proved in \cite[Corollary 6.11]{davison2020cohomological} that after an appropriate sign-twist $\chi$, we have
$$
[\tbps,\tbps]_{\chi} \subseteq \tbps
$$
where the Lie bracket in the LHS is the (super)commutator in the CoHA
$\mathcal{A}^T_{\tilde{Q},\tilde{W}}$. Thus, the BPS Lie (super)algebra inherits this structure from the sign-twisted multiplication on the CoHA (see \cite[Subsection 3.4.5]{jindalnegutbps} and Section \ref{sec:examples}) for a review of this well-known phenomenon in our notations). The morphism \eqref{eqn:inclusionofBPSusheaf} yields an inclusion of vector spaces
\begin{equation} 
\label{eqn:what is u}
\mathfrak{g}^T_{\tilde{Q},\tilde{W}}[u] \rightarrow \khrat
\end{equation}
Via the algebra structure of $\mathcal{A}^T_{\tilde{Q},\tilde{W},\text{rat}}$, the map above extends to a morphism of $R_{\mathbb{Q}}$-modules 
\begin{equation} \label{eqn:cohomologicalintegrality}
\mathrm{Sym}_{R_{\mathbb{Q}}}\left( \mathfrak{g}^T_{\tilde{Q},\tilde{W}}[u] \right)  \rightarrow  \khrat
\end{equation} 
The following result is called the \emph{absolute cohomology integrality theorem}, and it was proved in \cite[Theorem C]{davison2020cohomological} in the absence of the torus $T$ (see \cite{DW} and \cite{hennecart2026degenerationscohas2calabiyaucategories} for the analogous statement when $T$ is non-trivial). 

\medskip 

\begin{theorem} \label{thm:nilpotentintegrality}

The morphism \eqref{eqn:cohomologicalintegrality} is an isomorphism of $R_{\mathbb{Q}}$-modules. 

\end{theorem}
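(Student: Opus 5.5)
The plan is to obtain Theorem \ref{thm:nilpotentintegrality} by applying the derived global sections functor $H(\mathcal{M}^T(\tQ),-)$ to the relative cohomological integrality isomorphism \eqref{eqn:rintegralitytheorem}, and then identifying both sides with the two sides of \eqref{eqn:cohomologicalintegrality}. The right-hand side of \eqref{eqn:rintegralitytheorem} gives $\khrat$ directly, since $\JH^T$ is the semisimplification map, and pushforward along $\JH^T$ followed by global sections on $\mathcal{M}^T(\tQ)$ recovers cohomology of the stack. On the left-hand side we use that $\oplus$ is finite. Hence $\oplus_*$ is exact and commutes with global sections, and we obtain
$$
H\big(\mathcal{M}^T, \mathcal{F}_1\boxtimes_\oplus \mathcal{F}_2\big) \cong H(\mathcal{M}^T,\mathcal{F}_1)\otimes_{R_\BQ} H(\mathcal{M}^T,\mathcal{F}_2).
$$
This is a Künneth isomorphism relative to $BT$: the product in the $T$-equivariant setting is fibered over $BT$. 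This Künneth step requires some freeness or purity over $R_\BQ$, which I will take from the freeness of $\khrat$ in \cite[Corollary 9.7]{davison2022integrality}, or from purity of the BPS sheaf cohomology as in \cite{DW}.

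Granting this, global sections of $\mathrm{Sym}_{\boxtimes_\oplus}$ are $\mathrm{Sym}_{R_\BQ}$ of global sections, because taking $\mathfrak{S}_k$-invariants commutes with global sections in characteristic zero. Global sections of $\mathcal{BPS}[-1]\otimes H(B\BC^*,\BQ)$ are $\tbps\otimes \BQ[u] = \tbps[u]$. So both sides are abstractly isomorphic as $R_\BQ$-modules.

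The main point is to check that this abstract isomorphism coincides with the map \eqref{eqn:cohomologicalintegrality}, which is defined using the CoHA multiplication rather than the sheaf-level map $\oplus_*$. Here I would follow \cite[Theorem C]{davison2020cohomological}. The relative CoHA multiplication $\JH_*\varphi\boxtimes_\oplus \JH_*\varphi\to \JH_*\varphi$ is a morphism in $\mathcal{D}^+_c(\mathcal{M}^T(\tQ))$ whose global sections give the CoHA product. The perverse filtration is respected by this product, and on the associated graded the product becomes the supercommutative symmetric-monoidal product. Consequently the PBW map \eqref{eqn:cohomologicalintegrality} differs from the map induced by \eqref{eqn:rintegralitytheorem} only by terms of lower perverse degree. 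It is therefore an isomorphism, because it induces an isomorphism on associated graded pieces and the filtration is exhaustive and bounded below in each $\bn$-degree.

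The equivariant version is handled exactly as in \cite{DW} and \cite{hennecart2026degenerationscohas2calabiyaucategories}: one works over $BT$ with the shifted perverse $t$-structure of Subsection \ref{sub:vanishing cycles}. The expected obstacle is precisely this compatibility of the $T$-equivariant CoHA product with the perverse filtration, together with the relative Künneth over $R_\BQ$. Both of these require the $T$-equivariant purity input rather than formal arguments.
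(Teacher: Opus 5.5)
Your proposal is correct and takes the same route as the paper, which gives no argument of its own: it invokes \cite[Theorem C]{davison2020cohomological} for trivial $T$, and \cite{DW} and \cite{hennecart2026degenerationscohas2calabiyaucategories} for nontrivial $T$. Your sketch reconstructs the argument behind those citations: take global sections of \eqref{eqn:rintegralitytheorem}, apply a K\"{u}nneth step over $R_{\mathbb{Q}}$ justified by freeness, and compare via the perverse filtration, whose associated graded is supercommutative after the sign twist $\chi$.
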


\medskip 

\noindent We now restrict the isomorphisms \eqref{eqn:rintegralitytheorem} and \eqref{eqn:cohomologicalintegrality} to the closed subset $\mathfrak{M}^{T,\omega\text{-nilp}}({\tilde{Q}})$ of \eqref{eqn:closed embedding}. Let $\overline{i}_{\omega}$ denote the closed embedding $\mathcal{M}^{\omega\text{-nilp}}(\tilde{Q}) \hookrightarrow \mathcal{M}(\tilde{Q})$ induced on the good moduli spaces. 
Applying $\overline{i}_{\omega}^{!}$ to \eqref{eqn:rintegralitytheorem} gives an isomorphism
\begin{equation} 
\label{eqn:nilpotentrintegrality}
\mathrm{Sym}_{\boxtimes_{\oplus}}\left(\bigoplus_{\bn \in \nn \backslash \b0} \overline{i}_{\omega}^!\mathcal{BPS}_{\tilde{Q},\tilde{W},\bn}[-1] \otimes H(\mathrm{B}\mathbb{C}^*,\mathbb{Q})\right) \simeq \JH^T_{*}i_{\omega}^!\varphi_{\Tr(\tilde{W})} \mathbb{Q}^{\vir}_{\mathfrak{M}^T(\tilde{Q})}
\end{equation}
where we have used the fact that $\overline{i}_{\omega}^!$ commutes with $\JH^T_{*}$. This statement is non-trivial since $\JH^T$ is not proper, but it was proved in \cite{davison2020cohomological} and \cite{kinjo2024decompositiontheoremgoodmoduli} that $\JH^T$ satisfies base change and in particular commutes with $\overline{i}_{\omega}^{!}$.  
We will refer to 
\[ 
\mathcal{BPS}^{T,\omega\text{-nilp}}_{\tilde{Q},\tilde{W},\bn}:= \oi_{\omega}^{!} \mathcal{BPS}^T_{\tilde{Q},\tilde{W},\bn} \in \mathcal{D}^+(\mathcal{M}^{T,\omega\text{-nilp}}_{\bn}(\tilde{Q})) 
\] 
as the \emph{loop-nilpotent BPS sheaf}. Taking global sections of \eqref{eqn:nilpotentrintegrality} gives an isomorphism of cohomologically graded vector spaces
\begin{equation} 
\label{eqn:nilpotentintegrality}
\Sym_{R_{\mathbb{Q}}}(\tbpsw[u]) \simeq \kharat
\end{equation} 
where 
\begin{equation} 
\label{eqn:loop-nilpotent bps}
\tbpsw := H(\CM^{T,\omega\text{-nilp}}(\tQ),\mathcal{BPS}^{T,\omega\text{-nilp}}_{\tilde{Q},\tilde{W}}[-1])
\end{equation}
will be called the \emph{loop-nilpotent BPS Lie (super)algebra}.

\medskip 

\subsection{Support of the BPS sheaf} 
\label{sub:support} 

$\mathcal{BPS}^T_{\tilde{Q},\tilde{W}}$ is a perverse sheaf on $\mathcal{M}^T(\tilde{Q})$ and a priori its support can be arbitrary. However, the following result called the \emph{support lemma} was proved in \cite[Lemma 4.1]{davison2022integrality}.

\medskip 

\begin{lemma}
\label{lem:support}

The BPS sheaf $\mathcal{BPS}^T_{\tilde{Q},\tilde{W}}$ is supported on the locus where the loops $\omega_i$ all act by one and the same scalar. 
\end{lemma}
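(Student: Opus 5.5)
The plan is to prove the support lemma by exploiting a translation symmetry. The canonical cubic potential $\tW$ is invariant under shifting all loops by a common scalar. Concretely, consider the action of the additive group $\mathbb{A}^1$ on $\mathrm{Rep}_{\bn}(\tQ)$ given by
$$
\omega_i \mapsto \omega_i + t\cdot \mathrm{Id}_{n_i}, \qquad \forall i \in I,
$$
leaving the arrows $\alpha,\dalpha$ unchanged. Since $\Tr(\dalpha\alpha\,\mathrm{Id}) = \Tr(\alpha\dalpha\,\mathrm{Id})$, we get $\Tr(\tW)$ invariant under this action. The action commutes with $\GL_{\bn}$ and, since it is a translation, with $T$ up to the obvious twisting. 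Hence $\mathcal{M}^T_{\bn}(\tQ) \cong \mathbb{A}^1 \times \mathcal{M}'_{\bn}$, where $\mathcal{M}'_{\bn}$ is the locus with $\sum_i \Tr(\omega_i) = 0$. Under this identification the pushforward $\JH^T_{\bn,*}\varphi_{\Tr(\tW)}\mathbb{Q}^{\vir}$, and therefore $\mathcal{BPS}^T_{\tQ,\tW,\bn}$, is of the form $\mathbb{Q}_{\mathbb{A}^1}^{\vir}\boxtimes(\cdots)$. Note that $\varphi$ commutes with smooth pullback and $\ptau^{\leq 1}$ commutes with the shifted external product by a constant sheaf. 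This reduces the problem to the slice $\mathcal{M}'_{\bn}$.

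The core step is dimensional reduction combined with the structure of the Jacobi algebra. On the critical locus we have the relations $\partial\tW/\partial\alpha = 0$ and $\partial\tW/\partial\dalpha = 0$, which read $\alpha\omega_i = \omega_j\alpha$ and $\omega_i\dalpha = \dalpha\omega_j$. Thus each $\omega_i$ commutes with all arrows, and $\omega = (\omega_i)_i$ is an endomorphism of the underlying $\dQ$-representation. At a semisimple point $\rho = \bigoplus_k \rho_k^{\oplus m_k}$, $\omega$ therefore acts on each simple summand $\rho_k$ by a scalar $\lambda_k$ (Schur's lemma). The vanishing cycle sheaf is supported on $\mathrm{Crit}$, so $\mathcal{BPS}$ is supported on the image of $\mathrm{Crit}$ in $\mathcal{M}$, and there the loops act by scalars on each simple summand.

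It remains to show that only points with all $\lambda_k$ equal contribute. This is the main obstacle, since it is a genuine BPS statement and not a pure support statement for $\varphi$. To handle it, I would use the relative integrality isomorphism \eqref{eqn:rintegralitytheorem} together with a local product decomposition. Near a point with distinct eigenvalues $\lambda_1 \neq \lambda_2$, the étale-local (analytic) structure of $\mathcal{M}(\tQ)$ splits, via the decomposition of the representation into generalized $\omega$-eigenspaces, as $\mathcal{M}_{\bn^1}\times\mathcal{M}_{\bn^2}$ under $\oplus$. Likewise $\varphi_{\Tr(\tW)}$ splits as an external product by Thom–Sebastiani, with the cross terms of $\tW$ vanishing because $\alpha,\dalpha$ must map between eigenspaces with different $\omega$-eigenvalues, and the equations force those components to be zero. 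Consequently, near such points $\JH_*\varphi\mathbb{Q}^{\vir}$ is locally $\oplus_*$ of an external product. Comparing with the $\mathrm{Sym}_{\boxtimes_\oplus}$ decomposition, the $\ptau^{\leq 1}$ truncation of such a product of two pieces, each concentrated in perverse degrees $\geq 1$ and with the factor $H(\mathrm{B}\mathbb{C}^*)$ on each side, lies in perverse degree $\geq 2$. Hence $\mathcal{BPS}$ vanishes there, and its support is exactly the locus where all loops act by one and the same scalar.
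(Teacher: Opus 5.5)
The paper does not prove this lemma itself; it quotes it from \cite[Lemma 4.1]{davison2022integrality}, so your argument has to stand on its own. Your first two steps are fine; the translation reduction is harmless but never used. The overall shape is workable: Schur's lemma on semisimple Jacobi modules, then a local splitting near points where $\omega$ has at least two eigenvalues, then a perverse-amplitude count.

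\textbf{The gap is the splitting step.} It is not true that $\mathcal{M}(\tQ)$ splits \'etale-locally as $\mathcal{M}_{\bn^1}\times\mathcal{M}_{\bn^2}$ near $\rho=\rho'\oplus\rho''$. Off the critical locus, $\alpha,\dalpha$ do not commute with $\omega$, so generalized eigenspaces are not subrepresentations. Accordingly, the Luna slice at $\rho$ contains the extra directions $\mathrm{Ext}^1_{\tQ}(\rho',\rho'')\oplus\mathrm{Ext}^1_{\tQ}(\rho'',\rho')$, of total dimension $-2(\bn^1,\bn^2)'$.

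Nor do the cross terms of $\Tr(\tW)$ vanish. You only showed that the off-diagonal blocks vanish \emph{on} $\mathrm{Crit}$, but $\varphi_{\Tr(\tW)}$ depends on the function near the critical locus, not on the critical locus alone. Already for $Q=(i\to j)$ and $\bn=(1,1)$ one has $\mathcal{M}_{(1,1)}(\tQ)\cong\mathbb{A}^3$ but $\mathcal{M}_{(1,0)}(\tQ)\times\mathcal{M}_{(0,1)}(\tQ)\cong\mathbb{A}^2$. There $\Tr(\tW)=(\omega_j-\omega_i)\alpha\dalpha$, whose cross term is a nondegenerate quadratic form, not zero. Had the cross terms really vanished, Thom--Sebastiani would leave a factor $\mathbb{Q}^{\vir}$ on the transverse directions. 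That factor is neither supported on the image of $\oplus$ nor in the perverse degree your count needs.

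\textbf{What you must prove instead} is that the cross terms form a \emph{nondegenerate} quadratic form; this is what your observation about eigenspaces actually gives.

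Since $\mathrm{Hom}(\rho',\rho'')=0$, the stabilizer is $\mathrm{Aut}(\rho')\times\mathrm{Aut}(\rho'')$. Since the spectra of $\omega$ on $\rho'$ and $\rho''$ are disjoint, the off-diagonal gauge directions absorb the off-diagonal $\omega$-blocks. So the transverse part of the slice is $N=N_{12}\oplus N_{21}$, the off-diagonal blocks of the $\alpha,\dalpha$. Here $\omega'_k,\omega''_k$ denote the diagonal blocks of $\omega_k$, and $\alpha_{12},\dalpha_{12}$ (resp.\ $\alpha_{21},\dalpha_{21}$) map the $\rho''$-part to the $\rho'$-part (resp.\ the reverse). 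Every term of $\Tr(\tW)$ involves an even number of off-diagonal blocks, so on the slice $\Tr(\tW)=f'(x')+f''(x'')+q_x(n)$ with
\[
q_x(n)=\sum_{\alpha:i\to j}\Tr\big(\dalpha_{21}(\omega'_j\alpha_{12}-\alpha_{12}\omega''_i)\big)+\Tr\big(\dalpha_{12}(\omega''_j\alpha_{21}-\alpha_{21}\omega'_i)\big).
\]
This is a pairing of $N_{12}$ with $N_{21}$ through Sylvester operators, which stay invertible as long as the spectra of $\omega'$ and $\omega''$ remain disjoint.

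Three ingredients then give the local splitting: an equivariant change of variables absorbing these operators, Thom--Sebastiani, and $\varphi_q\mathbb{Q}^{\vir}_N\cong\mathbb{Q}_{\{0\}}$. Together they yield $\JH_*\varphi\mathbb{Q}^{\vir}\cong\oplus_*\big(\JH_*\varphi\mathbb{Q}^{\vir}\boxtimes\JH_*\varphi\mathbb{Q}^{\vir}\big)$ near $\rho$ with \emph{no shift}. There is no shift precisely because $\dim N=-2(\bn^1,\bn^2)'$ equals $\dim\mathfrak{M}_{\bn}(\tQ)-\dim\mathfrak{M}_{\bn^1}(\tQ)-\dim\mathfrak{M}_{\bn^2}(\tQ)$.

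Only then does your count go through. Each factor lies in perverse degrees $\geq 1$, $\boxtimes$ adds degrees, and $\oplus$ is finite hence $t$-exact, so the degree-one part vanishes near $\rho$. The $H(\mathrm{B}\mathbb{C}^*)$ factors play no role. Finally, this yields containment of the support in the scalar locus, which is all the lemma claims, not the equality you assert at the end.
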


\medskip 

\noindent If we further impose the condition that the loops $\omega_i$ are nilpotent, then the aforementioned scalar is forced to be 0. This immediately implies the following result.

\medskip 

\begin{proposition}

The loop-nilpotent BPS sheaf $\mathcal{BPS}^{T,\omega\emph{-nilp}}_{\tilde{Q},\tilde{W}}$, as well as the restriction 
$$
\oi^*_{\omega} \mathcal{BPS}^T_{\tilde{Q},\tilde{W}},
$$
are supported on the locus $\{\omega_i=0\}_{i \in I}$.

\end{proposition}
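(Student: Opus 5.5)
The plan is to deduce the statement from the support lemma (Lemma \ref{lem:support}) together with two formal facts about sheaves: base change along closed embeddings, and the structure of the locus where all the loops are nilpotent.

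Let $\mathcal{Z} \subset \mathcal{M}^T(\tQ)$ be the closed locus of semisimple representations on which every $\omega_i$ acts by one and the same scalar $\lambda$. By Lemma \ref{lem:support}, $\mathcal{BPS}^T_{\tQ,\tW}$ is supported on $\mathcal{Z}$, so it is isomorphic to $j_*\mathcal{G}$, where $j:\mathcal{Z}\hookrightarrow \mathcal{M}^T(\tQ)$ is the closed embedding and $\mathcal{G}=j^!\mathcal{BPS}^T_{\tQ,\tW}$.

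The next step is to compute the intersection of $\mathcal{Z}$ with $\mathcal{M}^{\omega\text{-nilp}}(\tQ)$. A semisimple representation in $\mathcal{Z}$ has each $\omega_i$ acting as the scalar operator $\lambda\cdot \mathrm{Id}$. Such an operator is nilpotent exactly when $\lambda=0$, provided the dimension vector is nonzero, which is the only case relevant to $\mathcal{BPS}_{\bn}$ with $\bn\neq\b0$. Moreover, a point of the coarse space $\mathcal{M}(\tQ)$ lies in $\mathcal{M}^{\omega\text{-nilp}}(\tQ)$ exactly when its semisimplification has nilpotent loops. For the closed subvariety defined by $\Tr(\omega_i^k)=0$, this nilpotency condition can be checked on the semisimple representative. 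Hence, set-theoretically,
$$
\mathcal{Z}\cap \mathcal{M}^{\omega\text{-nilp}}(\tQ)=\{\omega_i=0\}_{i\in I}.
$$
Since the paper works with reduced substacks, this set-theoretic statement is all we need.

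We then apply proper/closed base change. For a closed embedding, $\oi_\omega^!j_* \cong j'_*\,(i')^!$, and similarly $\oi_\omega^*j_*\cong j'_*(i')^*$, where $j'$ and $i'$ are the embeddings of $\mathcal{Z}\cap\mathcal{M}^{\omega\text{-nilp}}$. So both $\oi_\omega^!\mathcal{BPS}^T_{\tQ,\tW}$ and $\oi_\omega^*\mathcal{BPS}^T_{\tQ,\tW}$ are pushed forward from $\{\omega_i=0\}$, and are therefore supported there. The $T$-equivariance causes no trouble, because all the loci involved are $T$-stable.

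The main point requiring care is the identification of the intersection of the two loci. In particular, one has to check that the nilpotent locus in the coarse moduli space is indeed cut out by the vanishing of the invariants $\Tr(\omega_i^k)$, and that it meets $\mathcal{Z}$ only in $\lambda=0$. Everything else is formal.
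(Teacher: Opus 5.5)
Your proposal is correct and follows the paper's argument: the support lemma puts $\mathcal{BPS}^T_{\tQ,\tW}$ on the locus where all $\omega_i$ act by a common scalar, and nilpotency forces that scalar to be $0$ when $\bn \neq \b0$. The paper states this in one line, and your base-change step ($\oi_\omega^! j_* \cong j'_*(i')^!$ and $\oi_\omega^* j_* \cong j'_*(i')^*$) simply spells out why both restrictions inherit that support.
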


\medskip 

\noindent Consider the action map
\[ 
a:\mathbb{A}^1 \times \mathfrak{M}^T(\tilde{Q}) \rightarrow \mathfrak{M}^T(\tilde{Q}) 
\] 
corresponding to the action $\mathbb{A}^1 \curvearrowright \mathfrak{M}^T(\tilde{Q})$ that adds one and the same scalar to all the loops, i.e.
$$
a\Big(\lambda, (\rho(\alpha), \rho(\dalpha), \rho(\omega_i)) \Big) = \Big(\rho(\alpha), \rho(\dalpha), \rho(\omega_i)+\lambda\cdot \text{Id} \Big)
$$
We have a similar action for the good moduli space instead of the stack, so we obtain the commutative diagram
\[
\begin{tikzcd}[cramped]
	{\mathbb{A}^1 \times \mathfrak{M}^T(\tilde{Q}) } & {\mathfrak{M}^T(\tilde{Q}) } \\
	{\mathbb{A}^1 \times \mathcal{M}^T(\tilde{Q}) } & {\mathcal{M}^T(\tilde{Q}) }
	\arrow["a", from=1-1, to=1-2]
	\arrow["{\mathrm{id} \times \JH^T}"', from=1-1, to=2-1]
	\arrow["\JH^T", from=1-2, to=2-2]
	\arrow["{\overline{a}}"', from=2-1, to=2-2]
\end{tikzcd}
\] 
The following result is also known to experts in various contexts.

\medskip 

\begin{proposition} 
\label{eqn:sheafrelation}
    
There is an isomorphism of sheaves 
\begin{equation} 
\label{eqn:isosheaves}
\overline{a}_{*} \Big( \mathbb{Q}_{\mathbb{A}^1}[2] \boxtimes (\overline{i}_{\omega})_{*}\mathcal{BPS}^{T,\omega\emph{-nilp}}_{\tilde{Q},\tilde{W}} \Big) \simeq \mathcal{BPS}^{T}_{\tilde{Q},\tilde{W}}
\end{equation}
\end{proposition}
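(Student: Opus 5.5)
Let me write $\mathcal{B} = \mathcal{BPS}^T_{\tilde{Q},\tilde{W}}$ and $\mathcal{B}^{\text{nilp}} = \mathcal{BPS}^{T,\omega\text{-nilp}}_{\tilde{Q},\tilde{W}}$. The plan is to use the translation symmetry of $\tW$ together with Lemma \ref{lem:support}.

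The first step is to observe that $\Tr(\tW)$ is invariant under the action $a$. Adding $\lambda \cdot \text{Id}$ to every $\omega_i$ changes the trace by
$$
\lambda \sum_{\alpha} \Tr(\dalpha\alpha - \alpha\dalpha) = 0 .
$$
Hence $\Tr(\tW) \circ a = \Tr(\tW) \circ \text{pr}_2$. Moreover, $a$ becomes an isomorphism $\mathbb{A}^1 \times \mathfrak{M}^T(\tQ) \xrightarrow{\sim} \mathfrak{M}^T(\tQ)$ once it is composed with $(\lambda, \rho) \mapsto (\lambda, \rho)$, that is, via the shear map $(\lambda,\rho) \mapsto (\lambda, a(\lambda,\rho))$. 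Since the vanishing cycle functor commutes with smooth pullback and with Thom–Sebastiani for the zero function on $\mathbb{A}^1$, we obtain
$$
a^* \varphi_{\Tr \tW} \mathbb{Q}^{\vir} \simeq \mathbb{Q}_{\mathbb{A}^1}[1] \boxtimes \varphi_{\Tr\tW}\mathbb{Q}^{\vir}
$$
up to the appropriate shift. Pushing forward along $\JH^T$ and using the commutative square, the same translation equivariance descends to $\JH^T_*\varphi$ on $\mathcal{M}^T(\tQ)$. Because $\bar a$ acts by automorphisms of the stratification, $\ptau^{\leq 1}$ commutes with it, so $\mathcal{B}$ is $\mathbb{A}^1$-equivariant.

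The second step is to identify the support. By Lemma \ref{lem:support}, $\mathcal{B}$ is supported on the locus $\mathcal{M}^{\text{diag}}$ where all $\omega_i$ act by a common scalar $\lambda$. Consider the map
$$
\bar a \circ (\text{id} \times \bar i_\omega) : \mathbb{A}^1 \times \mathcal{M}^{\omega\text{-nilp}} \rightarrow \mathcal{M}
$$
Restricted to $\mathbb{A}^1 \times \mathcal{M}^{\omega\text{-nilp}} \cap \{\omega = 0 \text{ support}\}$, it is a closed embedding onto $\mathcal{M}^{\text{diag}}$, and in fact an isomorphism onto the locus of semisimple representations whose loops have a single common eigenvalue. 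The inverse sends $\rho$ to $(\frac{1}{|\bn|}\Tr \sum_i \omega_i,\ \rho - \lambda\,\text{Id})$. Thus $\bar a$ is finite on the relevant support, $\bar a_* = \bar a_!$ there, and $\bar a_*$ is exact for the perverse t-structure on it.

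The third step assembles these facts. Combining equivariance with the support statement, we get $\mathcal{B} \simeq \bar a_*(\mathbb{Q}_{\mathbb{A}^1}[1]\boxtimes \mathcal{B}|_{\lambda=0})$ for the perverse (shifted) restriction to the slice $\{\lambda = 0\}$, which is the slice transverse to the $\mathbb{A}^1$-orbits. Here the transversality and equivariance make $\mathcal{B}$ a product sheaf in the coordinates $(\lambda, \text{slice})$. On $\mathcal{M}^{\text{diag}}$ the slice $\{\lambda=0\}$ is exactly the nilpotent locus intersected with the support. For a product sheaf $\mathbb{Q}_{\mathbb{A}^1}[1] \boxtimes \mathcal{F}$, the functor $\bar i^!_{\omega}$, i.e.\ restriction to $\{0\}\times(\cdot)$, gives $\mathcal{F}[-2]$ up to the shift conventions. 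So $\mathcal{B}^{\text{nilp}} = \bar i_\omega^! \mathcal{B}$ recovers $\mathcal{F}$ shifted, and the shifts match the $\mathbb{Q}_{\mathbb{A}^1}[2]$ in \eqref{eqn:isosheaves}. We need one more check: $\bar i^!_\omega$ restricted to $\mathcal{M}^{\text{diag}}$ is indeed restriction to $\lambda = 0$. This holds because nilpotent points of $\mathcal{M}^{\text{diag}}$ are precisely those with $\lambda = 0$, and $\bar i_\omega^!$ commutes with the closed inclusion of the support.

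The main obstacle is making the equivariance-to-product step rigorous. Equivariance for an $\mathbb{A}^1$-action alone does not literally split a sheaf as an exterior product unless the action is free with a global slice. I will use the explicit trivialization above, via the shear isomorphism and the trace-of-$\omega$ coordinate, which exhibits $\mathcal{M}^{\text{diag}} \cong \mathbb{A}^1 \times (\mathcal{M}^{\text{diag}}\cap\{\lambda=0\})$. Then translation invariance gives the product decomposition by smooth base change along the projection. The remaining risk is bookkeeping the perverse shifts, which depend on the $T$-shifted t-structure conventions and on the $[-1]$ in the definition of $\mathcal{B}$.
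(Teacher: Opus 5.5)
Your proposal is correct, and its first half matches the paper. Translation invariance of $\Tr(\tW)$ gives $\bar a^*\mathcal{B}\simeq \mathrm{pr}_2^*\mathcal{B}$. The support lemma places $\mathcal{B}$ on the scalar locus, and $\bar a$ identifies that locus with $\mathbb{A}^1$ times the locus $\{\omega=0\}$. Together these give $\mathcal{B}\simeq \bar a_*(\mathbb{Q}_{\mathbb{A}^1}\boxtimes(\bar i_\omega)_*\bar i_\omega^*\mathcal{B})$, which is exactly the paper's intermediate isomorphism.

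You diverge in how you pass from this $*$-restriction to the $!$-restriction that defines $\mathcal{BPS}^{T,\omega\text{-nilp}}$.
\begin{itemize}
\item The paper applies Verdier duality to the intermediate isomorphism. It then uses the Verdier self-duality of $\mathcal{BPS}^T$ (a consequence of Davison's Theorem A), the identity $\mathbb{D}\mathbb{Q}_{\mathbb{A}^1}\simeq\mathbb{Q}_{\mathbb{A}^1}[2]$, and properness of $\bar a$ on the support (so that $\bar a_!=\bar a_*$) to exchange $\bar i_\omega^*$ for $\bar i_\omega^!$.
\item You compute $\bar i_\omega^!$ directly. Base change for the closed embedding of the scalar locus reduces it to the $!$-restriction to $\{0\}\times\{\omega=0\}$ inside the product $\mathbb{A}^1\times\{\omega=0\}$. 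There one has $(r\times\mathrm{id})^!\,\mathrm{pr}_2^*\simeq[-2]$.
\end{itemize}
Your route never uses self-duality of the BPS sheaf and is more elementary. The paper's route is shorter once Theorem A is available.

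One correction to your shift bookkeeping. You say that $\bar i_\omega^!$ applied to $\mathbb{Q}_{\mathbb{A}^1}[1]\boxtimes\mathcal{F}$ gives $\mathcal{F}[-2]$. It actually gives $\mathcal{F}[-1]$, because $r^!\mathbb{Q}_{\mathbb{A}^1}\simeq\mathbb{Q}_0[-2]$. With the correct value, $\mathcal{F}\simeq\mathcal{B}^{\text{nilp}}[1]$ and $\mathbb{Q}_{\mathbb{A}^1}[1]\boxtimes\mathcal{F}\simeq\mathbb{Q}_{\mathbb{A}^1}[2]\boxtimes\mathcal{B}^{\text{nilp}}$, as required. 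With $[-2]$ as you wrote it, you would end up with $\mathbb{Q}_{\mathbb{A}^1}[3]$.

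Also, the claim that $\ptau^{\leq 1}$ commutes with $\bar a$ is better justified as follows. The map $\bar a$ is $\mathrm{pr}_2$ composed with the shear automorphism, so it is smooth of relative dimension one and $\bar a^*[1]$ is perverse t-exact.
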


\medskip 

\begin{proof} The morphism $\Tr(\tW): \mathcal{M}^T(\tilde{Q}) \rightarrow \mathbb{C}$ is invariant with respect to the $\mathbb{A}^1$ action since 
\[ 
\mathrm{Tr}\left( \sum_{i \in I} (\omega_i + \lambda \cdot \text{id})[\alpha,\dalpha] \right) = \Tr(\tilde{W})+ \lambda \sum_{\alpha \in \edge} \Tr([\alpha,\dalpha]) = \Tr(\tilde{W}) 
\] 
and thus $\varphi_{\mathrm{Tr}(\tilde{W})}\mathbb{Q}^{\vir}_{\mathfrak{M}^T(\tilde{Q})}$ is an $\mathbb{A}^1$-equivariant sheaf. Since 
$$
\mathcal{BPS}^T_{\tilde{Q},\tilde{W}} = \ptau^{\leq 1}(\JH^T_*\varphi_{\mathrm{Tr}(\tilde{W})}\mathbb{Q}^{\vir}_{\mathfrak{M}^T(\tilde{Q})}[1])
$$
and $\JH^T$ is an $a$-invariant functor, it follows that $\mathcal{BPS}^T_{\tilde{Q},\tilde{W}}$ is also $\mathbb{A}^1$-invariant. Let 
$$
\overline{i} : \mathcal{M}^{T,\omega\text{-scalar}}(\tilde{Q}) \hookrightarrow \mathcal{M}^T(\tilde{Q})
$$
denote the closed embedding of the locus where all the loops $\omega_i$ act by one and the same scalar. By the support Lemma \ref{lem:support}, we have
\[ 
\mathcal{BPS}^T_{\tilde{Q},\tilde{W}} \simeq \overline{i}_*\overline{i}^* \mathcal{BPS}^T_{\tilde{Q},\tilde{W}}
\]
Note that $\overline{a}$ restricts to an isomorphism 
\[ 
\mathbb{A}^1 \times \mathcal{M}^{T,\omega\text{-nilp}}(\tilde{Q}) \rightarrow \mathcal{M}^{T, \omega\text{-scalar}}(\tilde{Q}).
\] 
Thus, by $\mathbb{A}^1$-equivariance, it follows that
\[ 
\overline{a}_{*}(\mathbb{Q}_{\mathbb{A}^1} \boxtimes \oi_{\omega}^* \mathcal{BPS}^T_{\tilde{Q},\tilde{W}}) \simeq \overline{i}^*\mathcal{BPS}^T_{\tilde{Q},\tilde{W}}
\] 
which by pushing forward under $\overline{i}_{*}$ implies
\begin{equation} \label{eqn:dualstatement}
\overline{a}_{*}( \mathbb{Q}_{\mathbb{A}^1} \boxtimes (\oi_{\omega})_{*}\oi_{\omega}^*\mathcal{BPS}^T_{\tilde{Q},\tilde{W}}) \simeq \mathcal{BPS}^{T}_{\tilde{Q},\tilde{W}}.   
\end{equation}
It is proved in \cite[Theorem A] {davison2020cohomological} that $\mathcal{BPS}^T_{\tilde{Q},\tilde{W}}$ is Verdier self dual, i.e 
$$
\mathbb{D}(\mathcal{BPS}^T_{\tilde{Q},\tilde{W}}) \simeq \mathcal{BPS}^T_{\tilde{Q},\tilde{W}}.
$$
Thus, applying $\mathbb{D}$ to the isomorphism \eqref{eqn:dualstatement} and using the properties $\mathbb{D}f^* \simeq f^! \mathbb{D}$ and $\mathbb{D}f_{*} \simeq f_! \mathbb{D}$ of Verdier duality, formula \eqref{eqn:isosheaves} follows as the restriction of the morphism $\overline{a}$ to the support of $\mathbb{Q}_{\mathbb{A}^1} \boxtimes (\oi_{\omega})_{*}\oi_{\omega}^*\mathcal{BPS}^T_{\tilde{Q},\tilde{W}}$ is proper (by Proposition \ref{lem:support}). \end{proof}

\medskip

\begin{proposition}
\label{prop:bps}

If $T$ satisfies the geometric genericity assumption \eqref{eqn:assumption geometric}, then the image of $\mathfrak{g}^{T, \omega\emph{--nilp}}_{\tilde{Q},\tilde{W}}$ under the morphism
\[ s: \kharate \hookrightarrow \khrate \] coincides with $\hbar \tbps$. 
\end{proposition}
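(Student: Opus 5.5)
The plan is to take global sections of the sheaf isomorphism \eqref{eqn:isosheaves} of Proposition \ref{eqn:sheafrelation} and compare the two resulting maps into $\khrat$. On the left-hand side, $\overline{a}_*$ of an exterior product with $\mathbb{Q}_{\mathbb{A}^1}[2]$ has global sections
$$
H(\mathbb{A}^1,\mathbb{Q}[2]) \otimes H\big(\CM^{T,\omega\text{-nilp}}(\tQ), \mathcal{BPS}^{T,\omega\text{-nilp}}_{\tQ,\tW}\big),
$$
and $H(\mathbb{A}^1,\mathbb{Q})=\mathbb{Q}$. Here the $\mathbb{A}^1$ carries the $T$-action of weight $\hbar$, since it translates the loops, which have equivariant parameter $\hbar$. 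The shift $[2]$, together with this weight, is exactly what will produce the factor $\hbar$ once we compare with the natural maps into the CoHA. This yields an abstract isomorphism of $R_{\BQ}$-modules $\tbpsw \cong \tbps$, up to a cohomological degree shift by $2$.

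Next, I would identify this isomorphism concretely inside $\khrat$. The point is that $s$ is induced by the adjunction $(i_\omega)_*i_\omega^! \to \mathrm{id}$. Restricted to the BPS sheaves, this adjunction is the composition of the dual of \eqref{eqn:isosheaves} with the pushforward along the zero section $\{0\}\times \CM^{\omega\text{-nilp}} \hookrightarrow \mathbb{A}^1\times\CM^{\omega\text{-nilp}}$. Equivalently, it is the Gysin map of $\{0\}\hookrightarrow \mathbb{A}^1$ tensored with the identity. In $T$-equivariant cohomology, the Gysin pushforward $H_T(\mathrm{pt}) \to H^{BM}_T(\mathbb{A}^1)\cong H_T(\mathrm{pt})$ for a line of weight $\hbar$ is multiplication by the equivariant Euler class $\hbar$. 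Hence, under the identification from the first step, $s(\tbpsw)=\hbar\,\tbps$.

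To make this comparison rigorous, I would use base change for $\JH^T$, which is already invoked for \eqref{eqn:nilpotentrintegrality}, to see that the square formed by the map \eqref{eqn:inclusionofBPSsheaf} and its $\oi_\omega^!$ version commutes with the adjunction counits. I would also use that $s$ is injective (Theorem \ref{thm: injection}), so no information is lost in the comparison. The geometric assumption \eqref{eqn:assumption geometric} enters through this injectivity, and through the freeness and torsion-freeness statements \eqref{eqn:tf 1}, \eqref{eqn:tf 2}. These allow dividing by $\hbar$ to be checked after localization.

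The main obstacle is the second step: pinning down that the natural map is precisely multiplication by $\hbar$ rather than an abstract isomorphism. This is a compatibility between Verdier duality, the self-duality of $\mathcal{BPS}^T$ used in Proposition \ref{eqn:sheafrelation}, and the counit for $i_\omega$. I would first try to verify it locally on the scalar-loop locus $\mathbb{A}^1\times\CM^{\omega\text{-nilp}}$, where everything is a product and the claim reduces to the rank-one computation on $\mathbb{A}^1$. As an independent check, I would compare graded dimensions: by \eqref{eqn:isosheaves} both sides have equal ranks with degrees shifted by $2$. An inclusion $s(\tbpsw)\subseteq \hbar\tbps$ with matching graded ranks, combined with torsion-freeness, then forces equality.
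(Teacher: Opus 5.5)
Your proposal is correct and is essentially the paper's argument. The paper likewise identifies the restriction $\tbpsw\to\tbps$ of $s$, via \eqref{eqn:isosheaves} and the inclusions $\{0\}\times\CM^{\omega\text{-nilp}}(\tQ)\hookrightarrow\mathbb{A}^1\times\CM^{\omega\text{-nilp}}(\tQ)\hookrightarrow\mathbb{A}^1\times\CM(\tQ)$, with the Gysin pushforward for $\{0\}\hookrightarrow\mathbb{A}^1$ (cup product with the Euler class $\hbar$), and then upgrades $s(\tbpsw)\subseteq\hbar\tbps$ to an equality using injectivity of $s$ and a graded dimension count. In that count, make explicit that each $H^d(\tbpsn)$ is finite-dimensional \cite[Theorem A]{davison2020cohomological} and that $\hbar$ is a nonzerodivisor on the free $R_{\mathbb{Q}}$-module $\tbps$ (your remark that the $R_{\mathbb{Q}}$-linear identification $\tbps\cong\tbpsw[2]$ already yields the image $\hbar\tbps$ directly is also valid).
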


\medskip 

\begin{proof}
The morphism $s$ is induced by the natural map of sheaves \[ (\oi_{\omega})_{*}\oi_{\omega}^! \JH^T_{*} \varphi_{\Tr(\tilde{W})} \mathbb{Q}^{\vir} \rightarrow \JH^T_{*} \varphi_{\Tr(\tilde{W})} \mathbb{Q}^{\vir}.\] Applying the natural morphism $^{\mathfrak{p}}\tau^{\leq 1} \rightarrow \mathrm{id}$ gives a commuting square of morphisms of sheaves
\[\begin{tikzcd}[cramped]
	{(\oi_{\omega})_{*} \oi_{\omega}^! \JH^T_{*} \varphi_{\Tr(\tilde{W})} \mathbb{Q}^{\vir}} & {\JH^T_{*} \varphi_{\Tr(\tilde{W})} \mathbb{Q}^{\vir}} \\
	{(\oi_{\omega})_{*} \oi_{\omega}^{!}\mathcal{BPS}^T_{\tilde{Q},\tilde{W}}[-1]} & {\mathcal{BPS}^T_{\tilde{Q},\tilde{W}}}[-1]
	\arrow[from=1-1, to=1-2]
	\arrow[from=2-1, to=1-1]
	\arrow[from=2-1, to=2-2]
	\arrow[from=2-2, to=1-2]
\end{tikzcd}\]
which implies that $s$ restricts to a morphism 
\[ 
\tbpsw \rightarrow  \tbps 
\]
given by applying derived global sections to the morphism of sheaves \begin{equation} \label{eqn:morphismofbps}
 (\oi_{\omega})_* \mathcal{BPS}^{T,\omega\text{-nilp}}_{\tilde{Q},\tilde{W}}[-1] = (\oi_{\omega})_*\oi_{\omega}^! \mathcal{BPS}^T_{\tilde{Q},\tilde{W}}[-1] \rightarrow \mathcal{BPS}^T_{\tilde{Q},\tilde{W}}[-1].   
\end{equation}
Consider the commutative diagram
\begin{equation}
\label{eqn:commutative diagram}
\begin{tikzcd}[cramped] 
{\mathcal{M}^{\omega \text{-nilp}}(\tilde{Q})} && {\mathcal{M}(\tilde{Q})} \\
{\{0\} \times \mathcal{M}^{\omega\text{-nilp}}(\tilde{Q})} & {\mathbb{A}^1 \times \mathcal{M}^{\omega\text{-nilp}}(\tilde{Q})} & {\mathbb{A}^1 \times \mathcal{M}(\tilde{Q})}
	\arrow["{\oi_{\omega}}", from=1-1, to=1-3]
	\arrow["\overline{a}_0", from=2-1, to=1-1]
	\arrow["{ r \times \mathrm{id}}"', from=2-1, to=2-2]
	\arrow["{\mathrm{id} \times \oi_{\omega}}"', from=2-2, to=2-3]
	\arrow["\overline{a}"', from=2-3, to=1-3]
\end{tikzcd} 
\end{equation}
where $r$ denotes the inclusion $\{0\} \rightarrow \mathbb{A}^1$. Let 
$$
\mathcal{F} = (\oi_{\omega})_*\oi_{\omega}^! \mathcal{BPS}^T_{\tilde{Q},\tilde{W}}[-1] \in \mathcal{D}^+(\CM(\tQ))
$$
Since the diagram \eqref{eqn:commutative diagram} of spaces commutes, we have the following commutative diagram of (complexes of) sheaves:
\[\begin{tikzcd}[cramped]
	{\mathcal{F}} && {\mathcal{BPS}^T_{\tilde{Q},\tilde{W}}[-1]} \\
	{a_{*}(r_{*}r^! \times \mathrm{id})(\mathbb{Q}_{\mathbb{A}^1}[2] \boxtimes \mathcal{F}}) & {\overline{a}_{*}(\mathrm{id} \times (\oi_{\omega})_{*}\oi_{\omega}^!)(\mathbb{Q}_{\mathbb{A}^1}[2] \boxtimes \mathcal{F})} & {\overline{a}_{*}(\mathbb{Q}_{\mathbb{A}^1}[2] \boxtimes \mathcal{F})} \\
	& {\overline{a}_{*}(\mathbb{Q}_{\mathbb{A}^1}[2] \boxtimes \mathcal{F})}
	\arrow["{\eqref{eqn:morphismofbps}}", from=1-1, to=1-3]
	\arrow["{{\simeq (1) } }", from=1-1, to=2-1]
	\arrow["{(3)}", from=2-1, to=2-2]
	\arrow["{(4)}", from=2-1, to=3-2]
	\arrow["{(5)}", from=2-2, to=2-3]
	\arrow["{\simeq (2)}", from=2-2, to=3-2]
	\arrow["{{\simeq \eqref{eqn:isosheaves}}}", from=2-3, to=1-3]
	\arrow["{=}"{description}, from=3-2, to=2-3]
\end{tikzcd}\]
The morphism labeled $(1)$ is an isomorphism because $r^! \mathbb{Q}_{\mathbb{A}^1}[2] \simeq \mathbb{Q}_{0}$. The morphism labeled $(2)$ is an isomorphism because $\overline{i}_{\omega}$ is a closed embedding and thus $\oi^{!}_{\omega}(\oi_{\omega})_{*} \simeq \mathrm{id}.$ The morphism labeled $(3)$ is just the composition of $(4)$ and the inverse of $(2)$. The morphism $(5)$ is defined by adjunction, but since $(2)$ is an isomorphism, we conclude that $(5)$ is an isomorphism. 
Finally, $(4)$ is given by the morphism of sheaves $r_{*}r^! \mathbb{Q}_{\mathbb{A}^1}[2] \rightarrow \mathbb{Q}_{\mathbb{A}^1}[2]$. Since $r^!\mathbb{Q}_{\mathbb{A}^1}[2]=\mathbb{Q}_{0}$ this morphism is the same as the morphism $r_{*} \mathbb{Q}_{0} \rightarrow \mathbb{Q}_{\mathbb{A}^1}[2]$ given by Gysin pushforward of the inclusion $r:\{0\} \rightarrow \mathbb{A}^1$. Applying cohomology, we conclude that $(4)$ is given by the cup product with $\mathrm{eu}(N)$ where $N$ is the normal bundle of $\{0\} \rightarrow \mathbb{A}^1$. Since $\mathrm{eu}(N)=\hbar$, we conclude that the morphism \eqref{eqn:morphismofbps} is given by multiplication with $\hbar$. Thus, 
$$
s(\mathfrak{g}^{T,\omega\text{-nilp}}_{\tilde{Q},\tilde{W}}) \subseteq \hbar \mathfrak{g}^T_{\tilde{Q},\tilde{W}}.
$$ 
We now claim that above containment is an equality. Since $s$ is injective by Theorem \ref{thm: injection}, it is enough to show that 
\begin{equation}
\label{eqn:precisely prove}
\text{graded dimension} \left( \hbar \mathfrak{g}^T_{\tilde{Q},\tilde{W}} \right) = \text{graded dimension} \left(\mathfrak{g}^{T,\omega\text{-nilp}}_{\tilde{Q},\tilde{W}}\right).
\end{equation}
Applying cohomology to Proposition \ref{eqn:sheafrelation} implies that for any fixed dimension vector $\bn$ and any fixed cohomological degree $d$, we have
$$
H^d(\hbar \mathfrak{g}^T_{\tilde{Q},\tilde{W},\bn}) = H^d(\mathfrak{g}^{T,\omega\text{-nilp}}_{\tilde{Q},\tilde{W},\bn}).
$$ 
Since the vector spaces in the left-hand side of the above equation are finite-dimensional (see \cite[Theorem A]{davison2020cohomological}), formula \eqref{eqn:precisely prove} is proved. \end{proof}

\medskip

\begin{corollary} \label{cor:nilpotentassubalgebra}
If $T$ satisfies the geometric assumption \eqref{eqn:assumption geometric}, then $\kharate$ is the subalgebra of $\khrate$ generated by $\hbar \tbps[u]$ (see \eqref{eqn:what is u}).
\end{corollary}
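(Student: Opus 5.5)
We identify $\kharat$ with its image under the injective algebra homomorphism $s$ of \eqref{eqn: pushforward loop nilpotent} (Theorem \ref{thm: injection}), and compare the two PBW decompositions. The plan has three steps.

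\textbf{Step 1: the generated subalgebra lies in the image of $s$.} By Proposition \ref{prop:bps}, $s(\tbpsw) = \hbar \tbps$. We also need the $u$-powers. The map \eqref{eqn:what is u} is built from the action \eqref{actionofBGM} of $H(\mathrm{B}\BC^*)$, which comes from the determinant line bundle. That line bundle pulls back along $i_\omega$, so this action commutes with $s$, since $s$ is induced by the sheaf map $(i_\omega)_*i_\omega^! \to \mathrm{id}$, which is $H(\mathrm{B}\BC^*)$-linear by the projection formula. Hence $s(\tbpsw[u]) = \hbar\tbps[u]$ inside $\khrat$. Since $s$ is an algebra homomorphism, the subalgebra generated by $\hbar\tbps[u]$ is contained in $s(\kharat)$.

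\textbf{Step 2: $s(\kharat)$ lies in the generated subalgebra.} By the loop-nilpotent integrality isomorphism \eqref{eqn:nilpotentintegrality}, $\kharat$ is spanned over $\ring_\BQ$ by products of elements of $\tbpsw[u]$. To use this I will check that the isomorphism \eqref{eqn:nilpotentintegrality}, obtained from \eqref{eqn:nilpotentrintegrality} by taking global sections, is realized by the CoHA multiplication in $\kharat$, exactly as \eqref{eqn:cohomologicalintegrality} is for $\khrat$. Here I would follow the proof of \cite[Theorem C]{davison2020cohomological} and the relative-to-absolute passage of \cite{DW, hennecart2026degenerationscohas2calabiyaucategories}. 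That argument uses only that the multiplication is induced by $\boxtimes_\oplus$ and is compatible with restriction to a Serre subcategory, and both hold for loop-nilpotent representations (Definition \ref{def:loop nilpotent}).

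\textbf{Step 3: $\ring_\BQ$-coefficients.} The generated subalgebra should be understood as an $\ring_\BQ$-subalgebra, and $s$ is $\ring_\BQ$-linear. Applying $s$ to such $\ring_\BQ$-linear combinations of products then shows that $s(\kharat)$ equals the $\ring_\BQ$-subalgebra generated by $\hbar\tbps[u]$.

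\textbf{Main obstacle.} The delicate point is Step 2. Compatibility of the relative integrality isomorphism with $\overline{i}_\omega^!$ (the base change used for \eqref{eqn:nilpotentrintegrality}) gives an abstract isomorphism. I need to upgrade it to the statement that products of BPS elements span the loop-nilpotent CoHA. If that upgrade is hard, I would use a fallback. The subalgebra generated by $\hbar\tbps[u]$ surjects from $\Sym_{\ring_\BQ}(\hbar\tbps[u])$ and sits inside $s(\kharat)$. By Proposition \ref{prop:bps}, the graded dimensions of $\Sym_{\ring_\BQ}(\hbar\tbps[u])$ and of $\kharat \cong \Sym_{\ring_\BQ}(\tbpsw[u])$ agree. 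So it suffices to show that the multiplication map from $\Sym_{\ring_\BQ}(\hbar\tbps[u])$ is injective. I would deduce this from the PBW isomorphism \eqref{eqn:cohomologicalintegrality} for $\khrat$, since $\khrat$ is torsion-free and multiplication by $\hbar$ is injective.
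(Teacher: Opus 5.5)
Your Steps 1--3 are exactly the paper's proof. The paper uses injectivity of $s$ (Theorem \ref{thm: injection}) and compatibility of $s$ with the $u$-action, so that Proposition \ref{prop:bps} gives $s(\tbpsw[u]) = \hbar\tbps[u]$. It then gets generation of $\kharat$ by $\tbpsw[u]$ from \eqref{eqn:nilpotentintegrality}, simply taking for granted the multiplicativity of the loop-nilpotent integrality isomorphism that you flag in Step 2.

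Your fallback dimension count is also valid, except that you neither need nor justify the claim that the generated subalgebra is the image of $\Sym_{\ring_{\BQ}}(\hbar\tbps[u])$. It suffices that this image is contained in the generated subalgebra, which is contained in $s(\kharat)$. Injectivity of the map from $\Sym_{\ring_{\BQ}}(\hbar\tbps[u])$, together with equality of the finite graded dimensions, then forces all three to coincide.
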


\medskip

\begin{proof} Since $T$ satisfy assumption \eqref{eqn:assumption geometric}, the morphism $s$ is injective by Theorem \ref{thm: injection}. The morphism $s$ clearly respects the action of $H(\mathrm{B}\mathbb{C}^*,\BQ)=\mathbb{Q}[u]$ defined in \eqref{actionofBGM} and thus 
$$
s(\tbpsw[u]) = \hbar \tbps[u].
$$
By the cohomological integrality Theorem \ref{thm:nilpotentintegrality}, $\tbpsw[u]$ generates $\kharat$. \end{proof}

\medskip 

\begin{lemma}
The morphism \[ s: \kharate \rightarrow \khrate \] respects the perverse filtration. Furthermore, if $T$ satisfies the geometric assumption \eqref{eqn:assumption geometric}, then $s$ is strict, in the sense that \[ s(P^d(\kharate))= \mathrm{Im}(s) \cap P^d(\khrate).\]
\end{lemma}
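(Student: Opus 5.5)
The perverse filtration on both algebras is defined by perverse truncation on the good moduli space. Concretely,
$$
P^d(\khrate) = \mathrm{Im}\Big(H\big(\CM^T(\tQ), \ptau^{\leq d}\JH^T_*\varphi_{\Tr(\tW)}\mathbb{Q}^{\vir}\big) \rightarrow \khrate\Big),
$$
and similarly $P^d(\kharate)$ uses $\ptau^{\leq d}\,\oi_\omega^!\JH^T_*\varphi_{\Tr(\tW)}\mathbb{Q}^{\vir}$. The plan is to prove the two claims separately: compatibility by functoriality of truncation, and strictness by transporting the question through the integrality isomorphisms to the BPS level, where Proposition \ref{prop:bps} describes $s$ explicitly.

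For compatibility, $s$ is induced by the adjunction counit $(\oi_\omega)_*\oi_\omega^! \mathcal{G} \to \mathcal{G}$, where $\mathcal{G} = \JH^T_*\varphi_{\Tr(\tW)}\mathbb{Q}^{\vir}$. Since $\oi_\omega$ is a closed embedding, $(\oi_\omega)_*$ is perverse t-exact. Hence $(\oi_\omega)_*\ptau^{\leq d}\oi_\omega^!\mathcal{G}$ lies in ${}^{\mathfrak p}D^{\leq d}$, so its map to $\mathcal{G}$ factors uniquely through $\ptau^{\leq d}\mathcal{G}$. Taking global sections gives $s(P^d)\subseteq P^d$.

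For strictness, I would use the relative integrality isomorphism \eqref{eqn:rintegralitytheorem} together with its loop-nilpotent restriction \eqref{eqn:nilpotentrintegrality}. The integrality isomorphism splits $\mathcal{G}$ as $\mathrm{Sym}_{\boxtimes_\oplus}$ of $\mathcal{BPS}[-1]\otimes H(\mathrm{B}\mathbb{C}^*)$, and the perverse degree of each summand is read off from the Sym-degree and the $u$-degree. Thus the perverse filtration on $\khrate$ is the filtration on $\mathrm{Sym}_{R_\BQ}(\tbps[u])$ by the corresponding weight, and the same holds on $\kharate$ with $\tbpsw$ in place of $\tbps$; this is the standard identification of \cite{davison2020cohomological}. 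Since \eqref{eqn:nilpotentrintegrality} was obtained by applying $\oi^!_\omega$ to \eqref{eqn:rintegralitytheorem}, the counit map $s$ is compatible with these decompositions. By naturality of the counit and the fact that $\boxtimes_\oplus$ is a pushforward along a finite map, $s$ is the map $\mathrm{Sym}$ of the BPS-level map. Proposition \ref{prop:bps} identifies that BPS-level map, under the geometric assumption, as the injection $\tbpsw \xrightarrow{\sim} \hbar\tbps$. So $s$ becomes the inclusion $\mathrm{Sym}(\hbar\tbps[u]) \subset \mathrm{Sym}(\tbps[u])$, and it preserves each graded piece $\mathrm{Sym}^k$ with its $u$-grading. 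An injective map of graded objects that preserves the grading is strict for the associated increasing filtrations: if $x=s(y)$ lies in $P^d$, then all components of $x$ of weight greater than $d$ vanish, so the same components of $y$ vanish by injectivity on each piece, and hence $y\in P^d$.

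The main obstacle is verifying that $s$ really is $\mathrm{Sym}(s|_{\mathrm{BPS}})$ with respect to the sheaf-level splittings. A priori the splitting of $\mathcal{G}$ depends on the choice of split inclusion \eqref{eqn:inclusionofBPSsheaf}. My first attempt would be to avoid choices by working with the truncation triangles themselves: show that the counit commutes with $\ptau^{\leq d}$ and with the associated graded ${}^{\mathfrak p}\mathcal{H}^d$, and that on ${}^{\mathfrak p}\mathcal{H}^d$ it equals the Sym-power of the BPS map, which is multiplication by $\hbar$ in the normal direction via \eqref{eqn:isosheaves}. Strictness would then follow from injectivity on each associated graded piece $\mathrm{Gr}^d$ of global sections. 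That injectivity holds because multiplication by $\hbar$ is injective on the free $R_\BQ$-modules involved, and these modules are free by \cite[Corollary 9.7]{davison2022integrality}.
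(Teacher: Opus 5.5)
Your overall plan matches the paper's. Compatibility comes from the counit $(\oi_\omega)_*\oi_\omega^!\to\mathrm{id}$, and strictness from the fact that the associated graded of $s$ is $\mathrm{Sym}$ of the injective map $\tbpsw[u]\to\tbps[u]$. However, the filtration you put on $\kharat$ is not the one in the statement, and for your filtration the strictness claim is false.

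\textbf{Where it breaks.} You define $P^d(\kharat)$ through $\ptau^{\le d}\oi_\omega^!\mathcal{G}$, with $\mathcal{G}=\JH^T_*\varphi_{\Tr(\tW)}\mathbb{Q}^{\vir}$. The paper defines it as $H(\oi_\omega^!\ptau^{\le d}\mathcal{G})$. These differ because $\oi_\omega^!$ is not perverse t-exact.
\begin{itemize}
\item The shift $\mathbb{Q}_{\mathbb{A}^1}[2]$ in Proposition \ref{eqn:sheafrelation} says exactly that $\mathcal{BPS}^{T,\omega\text{-nilp}}=\oi_\omega^!\mathcal{BPS}^T$ is a perverse sheaf shifted by $[-1]$, i.e.\ it sits in perverse degree $1$.
\item $\oi_\omega^!$ commutes with $\boxtimes_\oplus$ (as used in \eqref{eqn:nilpotentrintegrality}), and $\boxtimes_\oplus$ is t-exact.
\item Hence the summand of $\oi_\omega^!\mathcal{G}$ indexed by $\mathrm{Sym}^k$ with total $u$-degree $R$ has perverse degree $2k+2R$. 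The corresponding summand of $\mathcal{G}$ has perverse degree $k+2R$.
\end{itemize}
So your sentence ``the same holds on $\kharat$ with $\tbpsw$ in place of $\tbps$'' fails for your filtration. The map $s$ lowers the weight by $k$, and the map it induces on $\mathrm{Gr}^d$ is zero on every summand with $k\ge 1$. Neither your graded argument nor your associated-graded fallback can work.

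\textbf{A concrete counterexample.} Take $Q$ a single vertex with no arrows, $T=\mathbb{C}^*$ (the geometric assumption is vacuous), and $\bn=1$.
\begin{itemize}
\item Then $\mathcal{M}_1(\tQ)=\mathbb{A}^1$, $\mathcal{G}_1=\bigoplus_{r\ge0}\mathbb{Q}_{\mathbb{A}^1}[-2r]$, and $\oi_\omega^!\mathcal{G}_1=\bigoplus_{r\ge 0}\mathbb{Q}_0[-2-2r]$.
\item The class $[\omega=0]$ is sent by $s$ to $\hbar\in P^1(\khrat)$.
\item But $[\omega=0]$ lies in $P^2\setminus P^1$ for your filtration, so $s(P^1)\neq\mathrm{Im}(s)\cap P^1$.
\end{itemize}

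\textbf{The fix.} Use the paper's definition $P^d(\kharat)=H(\oi_\omega^!\ptau^{\le d}\mathcal{G})$.
\begin{itemize}
\item Since $\mathcal{G}$ splits as a direct sum of shifted perverse sheaves, this is a filtration by direct summands.
\item Compatibility is then just naturality of the counit applied to $\ptau^{\le d}\mathcal{G}\to\mathcal{G}$; no t-exactness is needed.
\item The $(k,R)$-summand now has weight $k+2R$ on both sides. So $\mathrm{gr}(s)=\mathrm{Sym}(\tbpsw[u]\to\tbps[u])$, and injectivity of $\mathrm{gr}(s)$ gives strictness, as in the paper.
\end{itemize}
The paper's own compatibility sentence writes $\ptau^{\le d}(\oi_\omega^!\cdots)$, which is the same slip. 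Its stated definition is the one that makes the lemma true.

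\textbf{Injectivity at the BPS level.} You argue it via multiplication by $\hbar$ under \eqref{eqn:isosheaves} on free $R_{\mathbb{Q}}$-modules, giving multiplication by $\hbar^k$ on $\mathrm{Sym}^k$. The paper instead deduces it from injectivity of $s$ under the geometric assumption. Your route is a valid alternative once the filtration is corrected.
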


\medskip 

\begin{proof}
The perverse filtrations on $\khrat$ and $\kharat$ are defined by 
\begin{align*} 
&P^d(\khrat) := H(\mathcal{M}^T(\tilde{Q}), \ptau^{\leq d} (\JH^T_{*} \varphi_{\Tr(\tilde{W})} \mathbb{Q}^{\vir})) \\
&P^d(\kharat) := H(\mathcal{M}^{T,\omega\text{-nilp}}(\tilde{Q}),\oi^{!}_{\omega} \ptau^{\leq d}(\JH^T_{*} \varphi_{\Tr(\tilde{W})} \mathbb{Q}^{\vir})) 
\end{align*} 
The morphism $s$ is defined by taking derived global section of the morphism of sheaves
\[ (\oi_\omega)_* \oi_{\omega}^! \JH^T_{*} \varphi_{\Tr(\tilde{W})} \mathbb{Q}^{\vir} \rightarrow \JH^T_{*} \varphi_{\Tr(\tilde{W})} \mathbb{Q}^{\vir}.\] Since $\oi_\omega$ is a closed embedding, $(\oi_\omega)_*$ commutes with the perverse truncation functor and thus we have a natural morphism
 \[ (\oi_\omega)_{*} \ptau^{\leq d}(\oi_{\omega}^! \JH^T_{*} \varphi_{\Tr(\tilde{W})} \mathbb{Q}^{\vir}) \rightarrow \ptau^{\leq d} \JH^T_{*} \varphi_{\Tr(\tilde{W})} \mathbb{Q}^{\vir}.\]
 Taking derived global sections implies that $s$ respects the filtrations. To show that the filtration is strict, we can follow the same argument as in \cite[Proposition 3.18]{jindalnegutbps}. The associated graded algebras with respect to the perverse filtration are $\mathrm{Sym}(\tbpsw[u])$ and $\mathrm{Sym}(\tbps[u])$ respectively, and the morphism $s$ is induced by 
 $$
 \tbpsw[u] \rightarrow \tbps[u].
 $$
 The morphism displayed above is injective since $T$ satisfies the geometric assumption \eqref{eqn:assumption geometric}. Therefore, the induced morphism on the associated graded algebras is also injective, which is equivalent to $s$ being strict. \end{proof}

\medskip

\subsection{BPS Lie algebras via explicit polynomials}
\label{sub:bps explicit}

We will now recall the results of \cite{jindalnegutbps}, which gave an explicit description for the image of
\begin{equation}
\label{eqn:inclusion}
\tbps \hookrightarrow \CV_{\BQ}^+ = \bigoplus_{\bn \in \nn} \ring_{\BQ}[z_{i1},\dots,z_{in_i}]^{\sym}_{i \in I}
\end{equation}
in terms of degree conditions on polynomials

\medskip 

\begin{theorem}
\label{thm:bps}

The image of $\tbps$ under \eqref{eqn:inclusion} is the set
$$
\Big\{ E(z_{i1},\dots,z_{in_i})_{i \in I} \in \ring_{\BQ}[z_{i1},\dots,z_{in_i}]^{\emph{sym}}_{i \in I} \Big\}
$$ 
where the polynomial $E$ in $\bn = (n_i)_{i \in I}$ variables is required to satisfy the following:

\medskip 

\begin{itemize}[leftmargin=*]

\item for any partition $\bn = \bn^1 + \dots + \bn^k$ into non-zero elements of $\nn$, the polynomial
\begin{equation}
\label{eqn:add y}
E(y_1+z_{i,1},\dots,y_1+z_{i,n_i^1}, \dots,y_k+z_{i,n_i-n_i^k+1}, \dots, y_k+z_{i,n_i})_{i \in I}
\end{equation}
satisfies the following degree bound (in $y_1,\dots,y_k$)
\begin{equation}
\label{eqn:degree bound}
\emph{deg}_{y_1,\dots,y_k}(E) \leq \frac {1-k}2 - \sum_{1\leq a < b \leq k} (\bn^a,\bn^b)^{\prime}
\end{equation}

\medskip 

\item for any $I$-partition $P$, the polynomial
$$
E \left(x_{ia},x_{ia}+ \hbar,\dots,x_{ia} + (n_i^{(a)}-1)\hbar\right)_{i \in I, a \in \{1,\dots,d_i\}}
$$
is divisible (over $\ring_{\BQ}$) by
\begin{equation}
\label{eqn:factorfullbps}
\hbar^{|\bn|-1} \prod_{(\alpha : i \rightarrow j) \in \edge} \mathop{\prod_{1 \leq a \leq d_i}}_{1 \leq b \leq d_j} \prod_{c \in \BZ} \left(x_{jb}-x_{ia}+c\hbar - u_\alpha\right)^{\chi_{n_i^{(a)},n_j^{(b)}}(c)}
\end{equation}

\end{itemize}

\end{theorem}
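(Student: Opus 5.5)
The plan is to deduce the statement from three earlier ingredients rather than reprove it from scratch: the loop-nilpotent description of \cite{jindalnegutbps}, Proposition \ref{prop:bps} (which identifies $s(\tbpsw)=\hbar\tbps$), and Proposition \ref{prop:yu zhao} (which identifies $\iota(\kha)=\CS^+$). Throughout we assume the geometric assumption \eqref{eqn:assumption geometric} together with Assumption \soft, so that all maps in \eqref{eqn:commutative intro} are injective. We may then regard everything inside $\CV^+_{\BQ}$ via $\iota'$, resp.\ $\iota = \iota'\circ s$.

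First, I would invoke the main theorem of \cite{jindalnegutbps}, as recalled in Subsection \ref{sub:bps intro}. It says that the image of $\tbpsw$ in $\CV^+_{\BQ}$ is the set of $E'\in\CS^+_{\BQ}$ satisfying the degree bounds \eqref{eqn:add y}--\eqref{eqn:degree bound} for every partition $\bn=\bn^1+\dots+\bn^k$. By Proposition \ref{prop:bps}, and since $s$ is injective and compatible with the embeddings into $\CV^+_{\BQ}$, a polynomial $E$ lies in the image of $\tbps$ if and only if $\hbar E$ lies in the image of $\tbpsw$. Hence $E$ is in the image of $\tbps$ if and only if $\hbar E$ satisfies the degree bounds and $\hbar E\in\CS^+_{\BQ}$.

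Second, I would translate these two conditions on $\hbar E$ into conditions on $E$.
\begin{itemize}[leftmargin=*]
\item \emph{Degree bound.} The substitution \eqref{eqn:add y} does not involve $\hbar$, and $\hbar\in\ring_{\BQ}$ is a constant with respect to $y_1,\dots,y_k$. Therefore $\deg_{y}(\hbar E)=\deg_y(E)$, and the degree bound transfers verbatim.
\item \emph{Divisibility.} By Definition \ref{def:shuffle int}, $\hbar E\in\CS^+_{\BQ}$ means that for every $I$-partition $P$, $\hbar\,\text{Spec}_P(E)$ is divisible by the product \eqref{eqn:factor}. Over $\BQ$ the factorials $n_i^{(a)}!$ are units and can be dropped. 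What remains is divisibility of $\hbar\,\text{Spec}_P(E)$ by $\hbar^{|\bn|}\prod(x_{jb}-x_{ia}+c\hbar-u_\alpha)^{\chi}$. The ring $\ring_{\BQ}[x_{ia}]$ is a polynomial ring over $\BQ$, hence a UFD, so we may cancel one factor of $\hbar$. This gives exactly the divisibility of $\text{Spec}_P(E)$ by \eqref{eqn:factorfullbps}. The argument is reversible, so the two conditions are equivalent.
\end{itemize}

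The main point requiring care is that divisibility in Definition \ref{def:shuffle int} is stated in the ring of color-symmetric polynomials $\ring[x_{ia}]^{\text{sym}}$, whereas the cancellation uses the full polynomial ring. I would handle this by noting that a symmetric polynomial divisible by a symmetric polynomial in $\ring_{\BQ}[x]$ has a symmetric quotient, because the quotient is unique. I would also make sure that $\hbar$ is a genuine nonzero prime element of $\ring_{\BQ}$. Under \eqref{eqn:assumption geometric}, $\hbar$ is a nonzero linear form on $\text{Lie}(T)$, so this holds and the cancellation step is legitimate.
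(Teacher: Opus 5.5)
Your proposal is correct and is essentially the paper's argument: it combines the description of the image of $\tbpsw$ as the degree-bounded part of $\CS^+_{\BQ}$ with Proposition \ref{prop:bps}, observes that multiplying by $\hbar$ does not affect the degree bounds (the paper phrases this as multiplication by $\hbar$ preserving perverse degree), and cancels one power of $\hbar$ from \eqref{eqn:factor}. The only difference is that the paper derives the loop-nilpotent description inside the proof as $P^1 \cap \iota(\kharat)$, using the strictness of $\iota'$ from \cite{jindalnegutbps} together with the strictness of $s$ from the lemma preceding the theorem, whereas you quote it from the introduction; also, a minor point: your cancellation step only needs $\hbar \neq 0$ in an integral domain, not that $\hbar$ is prime.
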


\medskip

\begin{proof} In \cite{jindalnegutbps}, we showed that 
$$
P^1(\mathcal{A}^T_{\tilde{Q},\mathrm{rat}}) \subset \mathcal{A}^T_{\tilde{Q},\mathrm{rat}} \cong \CV^+_{\mathbb{Q}} = \bigoplus_{\bn \in \nn} \ring_{\mathbb{Q}}[x_{i1},\dots,x_{in_i}]_{i \in I}^{\sym}
$$
coincides with the set of polynomials satisfying the condition in the first bullet of the theorem. Similarly, the condition in the second bullet is equivalent to $\hbar E \in \CS^+_{\BQ}$. Moreover, we showed that the morphism $\iota^{\prime}$ of \eqref{eqn:shuffle morphism} strictly preserves the perverse filtration, and thus so does $\iota^{\prime} \circ s$. We conclude that 
\[ 
\iota^{\prime}\circ s(\tbpsw) = P^1(\mathcal{A}^T_{\tilde{Q},\mathrm{rat}}) \cap \iota(\kharat) 
\] 
which gives
\[ 
\iota^{\prime}(\hbar \tbps) = P^1(\mathcal{A}^T_{\tilde{Q}, \mathrm{rat}}) \cap \iota(\kharat).
\] 
Since multiplication by $\hbar$ action doesn't change the perverse degree, $E \in \iota^{\prime}(\mathfrak{g}^T_{\tilde{Q},\tilde{W}})$ iff $E \in P^1$ and $\hbar E \in \mathcal{S}^+_{\BQ}$. Thus the claim follows. \end{proof}

\begin{corollary}

The image of \[ \iota^{\prime}: \khrate \rightarrow \mathcal{V}^+_{\mathbb{Q}}\] is generated by 
\begin{equation} \label{eqn:special2}
e_{\bn,g}^{\prime} = g(z_{i1},\dots,z_{in_i})_{i \in I} \cdot \frac 1{\hbar} \prod_{i \in I} \prod_{1 \leq a ,  b \leq n_i} \left(z_{ib}-z_{ia}+\hbar\right)
\end{equation}
as $\bn=(n_i)_{i \in I}$ ranges over $\mathbb{N}^I$ and $g$ ranges over $R_{\mathbb{Q}}[z_{i1},\cdots, z_{i,n_i}]^{\mathrm{sym}}_{i \in I}$
\end{corollary}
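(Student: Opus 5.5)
The plan is to work entirely inside $\CV^+_{\BQ}$ and compare two subalgebras: $\iota'(\khrat)$ and the subalgebra generated by the $e'_{\bn,g}$. Corollary \ref{cor:nilpotentassubalgebra} and Theorem \ref{thm: injection} provide the bridge. By Corollary \ref{cor:nilpotentassubalgebra}, $s(\kharat)$ is the subalgebra of $\khrat$ generated by $\hbar\tbps[u]$. Applying $\iota'$, we get that $\iota(\kharat)=\CS^+_{\BQ}$ (Proposition \ref{prop:yu zhao}) is the subalgebra generated by $\hbar\,\iota'(\tbps[u])$. On the other hand, the absolute cohomological integrality Theorem \ref{thm:nilpotentintegrality} shows that $\iota'(\khrat)$ is generated as an algebra by $\iota'(\tbps[u])$. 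So it suffices to show two inclusions.

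\emph{First inclusion: $e'_{\bn,g}\in\iota'(\khrat)$.} Note that $e'_{\bn,g}=\hbar^{-1}e_{\bn,g}$, and $e_{\bn,g}=\iota(\varepsilon_{\bn,g})$ lies in $\iota'(s(\kharat))$. I would show that $\varepsilon_{\bn,g}$ is divisible by $\hbar$ in the full CoHA, either geometrically or algebraically.

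Geometrically, $[\omega=0]_{\bn}$ in $Z_{Q^+,\bn}$ is the pushforward from the scalar-loop locus of a class restricted along $\{0\}\hookrightarrow\mathbb{A}^1$. This is the same Gysin mechanism as step (4) in the proof of Proposition \ref{prop:bps}, which produces the factor $\hbar=\mathrm{eu}(N)$. Concretely, the class of the locus $\{\omega_i=\lambda\cdot\mathrm{Id}\}$ with $\lambda$ free gives an element $\tilde\varepsilon$ with $\hbar\,\tilde\varepsilon=\varepsilon$.

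Algebraically, one checks that $\hbar^{-1}e_{\bn,g}$ satisfies the characterization of $\iota'(\khrat)$: a polynomial $E$ lies there iff $\hbar E$ lies in the algebra generated by $\hbar\cdot(\text{BPS})$. I would prefer the geometric route. Since $\hbar$ is not a zero divisor and $\iota'$ is injective (Theorem \ref{thm: injection}), this gives $e'_{\bn,g}\in\mathrm{Im}\,\iota'$.

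\emph{Second inclusion: $\iota'(\khrat)$ is generated by the $e'_{\bn,g}$.} Take $x\in\iota'(\tbps[u])$. Then $\hbar x\in\CS^+_{\BQ}$, which by Theorem \ref{thm:shuffle int} is generated over $\ring$ by the $e_{\bn,g}=\hbar e'_{\bn,g}$. So $\hbar x$ is a sum of products $\hbar^m e'_{\bn_1,g_1}*\dots*e'_{\bn_m,g_m}$ with $m\ge1$, and dividing by $\hbar$ writes $x$ as an $\ring$-combination of products of $e'$'s with extra powers $\hbar^{m-1}$. All terms stay in $\CV^+_{\BQ}$ because $\CV^+_{\BQ}$ is $\hbar$-torsion-free. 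Hence every BPS generator lies in the algebra generated by the $e'$'s, and therefore so does the whole of $\iota'(\khrat)$.

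The main obstacle is the first inclusion, namely justifying that $\hbar^{-1}e_{\bn,g}$ genuinely lies in the image of the full CoHA. If the geometric Gysin argument is awkward, I would fall back on showing that $e'_{\bn,g}$ is a $\BQ[u]$-combination of products of elements of $\tbps$. For this I would use the characterization in Theorem \ref{thm:bps} together with the perverse filtration: $\hbar^{-1}\varepsilon_{\bn,g}$ should be checked to lie in $\iota'(\khrat)$ by verifying that its perverse components satisfy the degree bounds, using strictness of $s$.
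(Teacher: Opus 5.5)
Your second inclusion is exactly the paper's proof. Cohomological integrality reduces everything to the BPS generators, and you use $\hbar\,\iota'(\tbps[u])\subseteq\CS^+_{\BQ}$. That algebra is generated by the $e_{\bn,g}=\hbar e'_{\bn,g}$, and you divide by $\hbar$ in the torsion-free $\CV^+_{\BQ}$.

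The real difference is your first inclusion, $e'_{\bn,g}\in\mathrm{Im}\,\iota'$. The paper's proof never addresses it; it only asserts that it ``suffices'' to show $\iota'(\tbps)\subseteq\langle e'_{\bn,g}\rangle$. So your proposal proves the full equality, whereas the paper only proves one containment.

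Your geometric route is correct. For $\bn\neq\b0$, the locus $\{\omega_i=\lambda\cdot\mathrm{Id}\ \forall i\}\cong\mathbb{A}^1\times\mathrm{Rep}_{\bn}(Q)$ lies in $Z_{Q^+,\bn}$, because scalar loops commute with all arrows. Inside it, $\{\lambda=0\}$ is cut out by a section of an equivariant line bundle of weight $\pm\hbar$. Hence $s(\varepsilon_{\bn,g})=\pm\hbar\,\tilde\varepsilon_{\bn,g}$, already integrally. As a consistency check, $\iota'(\tilde\varepsilon_{\bn,g})$ is $g$ times the Euler class of $\bigl(\prod_i\mathfrak{gl}_{n_i}\bigr)/\BC\cdot(\mathrm{Id}_{n_i})_{i\in I}$, which is $\pm e'_{\bn,g}$.

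Two small corrections. First, this step needs only that $\CV^+_{\BQ}$ has no $\hbar$-torsion; injectivity of $\iota'$ plays no role. Second, $\bn=\b0$ has to be excluded, both in your argument and in the statement itself, since $e'_{\b0,g}=g/\hbar\notin\CV^+_{\BQ}$.

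Drop the ``algebraic'' alternative. The characterization ``$E\in\mathrm{Im}\,\iota'$ iff $\hbar E\in\CS^+_{\BQ}$'' is false, because the power of $\hbar$ needed depends on how many BPS factors appear. For $Q=A_1$ one has $\tW=0$, so $\mathrm{Im}\,\iota'=\CV^+_{\BQ}$. Yet $\CS_{2\bs^i}\otimes\BQ=\hbar^2\ring_{\BQ}[z_1,z_2]^{\mathrm{sym}}$, so $E=1$ in degree $2\bs^i$ lies in the image while $\hbar E\notin\CS^+_{\BQ}$. The perverse-filtration fallback is unnecessary as well; the scalar-loop class is the right tool.
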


\medskip

\begin{proof}
By the cohomological integrality Theorem \ref{thm:nilpotentintegrality}, it suffices to show that $\iota^{\prime}(\tbps)$ is contained inside the algebra generated by $e_{\bn,g}^{\prime}$. However, we have shown that $\iota^{\prime}(\hbar \fg^{T}_{\tilde{Q},\tilde{W}})$ is inside the algebra generated by $e_{\bn,g}$. Thus it follows that $\iota^{\prime}(\fg^{T}_{\tilde{Q},\tilde{W}})$ is contained inside algebra generated by $e_{\bn,g}/\hbar$, which is precisely $e^{\prime}_{\bn,g}$. \end{proof}

\medskip 

\begin{remark}
In the case of cyclic quivers, Theorem \ref{thm:bps} implies Proposition 17.7 of \cite{J}, which was instrumental to the computation of the equivariant CoHA of preprojective algebra for cyclic quivers. 
\end{remark}

\medskip

\subsection{Kac polynomials}
\label{sub:kac}

We will now consider the graded Lie algebras
\begin{align*}
&\tbps = \bigoplus_{\bn \in \nn \backslash \b0} \tbpsn \\
&\tbpsw = \bigoplus_{\bn \in \nn \backslash \b0} \tbpswn
\end{align*}
where each $\tbpsn$ and $\tbpswn$ also carries a cohomological grading. We write
$$
\text{grdim}_{\BQ}(\tbpsn) = \sum_{d \in \mathbb{Z}} (-1)^d \dim_{\BQ} \left(H^d(\tbpsn)\right)t^{\frac d2}
$$
for the Hilbert polynomial with respect to the cohomological grading, and similarly for the loop-nilpotent version and other cohomologically graded vector spaces. The maps
$$
\tbpsw \rightarrow \tbps \rightarrow \CV^+_{\BQ}
$$
respect the cohomological grading, which appears on the polynomial rings making up $\CV^+_{\BQ}$ via the formulas $\deg z_{ia} = \deg \hbar = \deg u_\alpha = 2$. Therefore, Theorem \ref{thm:bps} implies 
\begin{equation}
\label{eqn:bps}
t^{(\bn,\bn)^{\prime}} \text{grdim}_{\BQ} (\tbpsn) = \text{grdim}_{\BQ} \left\{\begin{split} &\text{polynomials }E(z_{i1},\dots,z_{in_i})_{i \in I} \text{ which} \\ &\text{satisfy the bullets in Theorem \ref{thm:bps}} \end{split} \right\}
\end{equation}
However, we recall that \cite{davison2022integrality} proved the equality \footnote{Formula \eqref{eqn:kac classic} is proved in \loccit for trivial $T$, but \cite[Theorem A, Section 9.2]{davison2022integrality} implies that $\tbps$ is a free $H^T(\cdot)$-module. Thus we have an isomorphism 
\[ 
\tbps \simeq \bps \otimes H^T(\cdot)
\] 
which respects the cohomological grading. As $\text{grdim}_{\BQ}(H^T(\cdot)) = (1-t)^{-r}$, we infer \eqref{eqn:kac classic}.}
\begin{equation}
\label{eqn:kac classic}
\text{grdim}_{\BQ} (\tbpsn) = (1-t)^{-r}A_{Q,\bn}(t^{-1})
\end{equation}
where $A_{Q,\bn}(q)$ denotes the Kac polynomial which counts absolutely indecomposable $\bn$-dimensional representations of the quiver $Q$ over $\BF_q$. Combining the above two displays yields Corollary \ref{cor:kac} (note that the extra power of $t$ in the statement of the Corollary is due to the loop-nilpotency condition therein, see Proposition \ref{prop:bps}).

\bigskip 

\section{ADE and Jordan quivers}
\label{sec:examples}

\medskip 

\noindent For explicit calculations in the CoHA, it is useful to twist its multiplication by a sign, as follows. Given dimension vectors $\bn,\bn' \in \nn$, let 
\begin{equation}
\label{eqn:coha product}
m_{\bn,\bn'} : \CA^T_{\tQ,\tW,\bn} \otimes \CA^T_{\tQ,\tW,\bn'} \rightarrow \CA^T_{\tQ,\tW,\bn+\bn'}
\end{equation}
be the restriction of the CoHA multiplication to the $\bn,\bn'$ component. Let $\mathcal{A}^{T,\chi}_{\tilde{Q},\tilde{W}}$ denote the sign twisted CoHA, where we use the sign-twisted multiplication 
\begin{equation}
\label{eqn:twisted coha product}
m_{\bn,\bn'}^{\chi} := (-1)^{(\bn,\bn')} m_{\bn,\bn'}
\end{equation}
instead of \eqref{eqn:coha product}. We define the sign-twisted loop nilpotent CoHA $\mathcal{A}^{T, \omega\text{-nilp},\chi}_{\tilde{Q},\tilde{W}}$ analogously, and the rational analogues of all these algebras using the subscript ``rat''.

\medskip 

\subsection{Finite type quivers}
\label{sub:ade quiver}

Let $Q$ be an ADE type quiver with vertex set $I$, and let $Y_{\hbar}(\fg)$ be the corresponding Yangian. The latter is the $\mathbb{Q}[\hbar]$-algebra generated by symbols $X^{\pm}_{i,r}$ and $H_{i,r}$, where $i \in I$ and $r \geq 0$, and relations that we will not recall in full (see \cite{CWendl} for reference). Let $Y^+_{\hbar}(\fg)$ denote the subalgebra generated by the $X^+_{i,r}$. We will consider the (loop-nilpotent) CoHA with respect to the torus
$$
T=\mathbb{C}^*
$$
with $\hbar$ being the standard generator of $\BQ[\text{Lie}(T)]$ and
$$
u_\alpha = \frac {\hbar}2
$$
for every arrow $\alpha$ of $Q$. The following is proved in \cite[Theorem D]{Yang_2018}.

\medskip 

\begin{theorem}
The assignment 
$$
X^+_{i,r} \mapsto z^r_{i1} \in \mathcal{A}^{T,\chi}_{\tilde{Q},\tilde{W}, \bs^i, \mathrm{rat}}
$$
(for all $i \in I, r \geq 0$) induces an isomorphism of $\mathbb{Q}[\hbar]$-algebras
\begin{equation}
\label{eqn:phi}
\Phi: Y^+_{\hbar}(\fg) \rightarrow \mathcal{A}^{T,\chi}_{\tilde{Q},\tilde{W},\mathrm{rat}}
\end{equation}
\end{theorem}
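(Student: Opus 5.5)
The statement is quoted from \cite[Theorem D]{Yang_2018}, so the plan is to reconstruct its proof in the language of Sections 2--3: realize $\mathcal{A}^{T,\chi}_{\tilde{Q},\tilde{W},\mathrm{rat}}$ as a shuffle algebra and compare it with the standard shuffle presentation of $Y^+_\hbar(\fg)$.

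\textbf{Step 1 (embedding and homomorphism).} By Theorem \ref{thm: injection} (for $u_\alpha = \hbar/2$ and $Q$ of ADE type there are no loops, so the geometric assumption \eqref{eqn:assumption geometric} reduces to $\hbar\neq 0$), the map $\iota'$ identifies the sign-twisted CoHA with a subalgebra of the sign-twisted big shuffle algebra $\CV^+_{\BQ}$ with $u_\alpha=\hbar/2$. For $\bn=\bs^i$ the component is $\BQ[\hbar][z_{i1}]$, so the assignment $X^+_{i,r}\mapsto z_{i1}^r$ makes sense. Next I check that the $z_{i1}^r$ satisfy the defining relations of $Y^+_\hbar(\fg)$ inside the sign-twisted shuffle algebra. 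The quadratic relation $[X^+_{i,r+1},X^+_{j,s}]-[X^+_{i,r},X^+_{j,s+1}] = \frac{\hbar (\bs^i,\bs^j)}{2}(X^+_{i,r}X^+_{j,s}+X^+_{j,s}X^+_{i,r})$ reduces to a two-variable computation with $\zeta_{ij}(x)/\zeta_{ji}(-x)$. This ratio equals $\frac{x-\hbar}{x+\hbar}$ for $i=j$, and $\pm\frac{x+\hbar/2}{x-\hbar/2}$ for adjacent vertices, the sign being absorbed by the twist $(-1)^{(\bn,\bn')}$. The Serre relations are checked as a symmetrization identity in $1-(\bs^i,\bs^j)$ variables, exactly as in the Feigin--Odesskii/Enriquez theory. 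This yields an algebra map $\Phi$.

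\textbf{Step 2 (surjectivity).} I would show that the image of $\iota'$ equals the subalgebra generated by the $z_{i1}^r$. Over $\field$, Theorem \ref{thm:shuffle loc} says that the elements $e_{i,k}$ generate $\CS^+_{\text{loc}}$, which by Proposition \ref{prop:yu zhao} and localization equals the localized CoHA. Over $\BQ[\hbar]$ I would instead use the generators $e'_{\bn,g}$ of \eqref{eqn:special2} and show that each one is an integral combination of products of $z_{i1}^r$. The plan is to run the argument of the proof of Theorem \ref{thm:shuffle loc} while tracking $\hbar$-denominators, or to use Theorem \ref{thm:bps} to identify $\tbps$ with the real root spaces $\fg_\beta[z]$. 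For ADE type, Kac polynomials are $1$ on positive roots, so each $\tbpsn$ is a rank-one free module over $\BQ[\hbar]$ up to the $u$-tower. It is then generated by an iterated commutator of $z_{i1}^r$'s, which is nonzero modulo $\hbar$ because at $\hbar=0$ one recovers $U(\fn[z])$.

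\textbf{Step 3 (injectivity).} I would compare graded dimensions. By the PBW theorem for Yangians, $Y^+_\hbar(\fg)$ is free over $\BQ[\hbar]$ with graded character $\Sym(\fn[z])$. By Theorem \ref{thm:nilpotentintegrality}, the CoHA is $\Sym(\tbps[u])$, and $\tbps$ has graded dimension given by \eqref{eqn:kac classic}, with $A_{Q,\bn}=1$ exactly for positive roots. These characters agree, so the surjection of Step 2 between free modules of equal finite graded rank is an isomorphism. Equivalently, injectivity follows because $\Phi$ modulo $\hbar$ is the isomorphism $U(\fn[z])\cong \Sym$.

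The main obstacle is Step 2, namely integral generation over $\BQ[\hbar]$ rather than over $\field$. I would first try the $\hbar\to 0$ degeneration argument: show surjectivity modulo $\hbar$ using the classical statement that $\fn[z]$ is generated by $e_i\otimes z^r$, then lift via graded Nakayama. This works because everything is non-negatively graded and free.
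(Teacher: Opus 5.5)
Your Steps 1 and 3 are reasonable, apart from a sign issue noted at the end. The real difficulty is Step 2, integral surjectivity over $\BQ[\hbar]$, and none of your routes closes it. (The paper does not prove the statement; it quotes \cite[Theorem D]{Yang_2018}.)

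\textbf{Why your routes for Step 2 fail.} Theorem \ref{thm:shuffle loc} only gives generation over $\field$. ``Tracking $\hbar$-denominators'' in its proof is not an argument, because that proof rests on a generation theorem over a field.

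The Nakayama argument needs surjectivity of $Y^+_\hbar(\fg)/\hbar \to \mathcal{A}/\hbar\mathcal{A}$, where $\mathcal{A}=\mathcal{A}^{T,\chi}_{\tilde{Q},\tilde{W},\mathrm{rat}}$. That is, you need $\mathcal{A}/\hbar\mathcal{A}$ to be generated by its $\bs^i$-components. The classical fact that $\fn[z]$ is generated by the $e_i\otimes z^r$ concerns the source, not the target. And ``at $\hbar=0$ one recovers $U(\fn[z])$'' is exactly the $\hbar=0$ case of the theorem.

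Nor can this be checked in your shuffle model. At $\hbar=0$ one has $\zeta_{ii}=1$ and $\zeta_{ij}(x)=\pm\zeta_{ji}(-x)$, so $\CV^+\otimes_{\BQ[\hbar]}\BQ$ is supercommutative and $\iota'$ does not stay injective modulo $\hbar$. Concretely, for adjacent $i,j$ the (correctly twisted) commutator of $z_{i1}^0$ and $z_{j1}^0$ equals $\pm\hbar$ in $\CV^+$. Yet it is not divisible by $\hbar$ in $\mathcal{A}$, because the constant $1$ in degree $\bs^i+\bs^j$ is not in $\iota'(\mathcal{A})$. So your claim that the iterated commutator is ``nonzero modulo $\hbar$'' is circular, and it cannot be detected through $\iota'$.

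\textbf{What is missing.} You need a reason why iterated brackets of the classes $1\in\mathcal{A}_{\bs^i}$ generate the BPS Lie algebra over $\BQ[\hbar]$. There are two ways to get it.
\begin{enumerate}
\item Invoke Davison's Lie algebra isomorphism \eqref{eqn:iso lie}, as the paper does in the proof of Theorem \ref{thm:ade}.
\item Argue by degrees. Since $A_{Q,\beta}=1$ for every positive root $\beta$, simple or not, each $\mathfrak{g}^T_{\tilde{Q},\tilde{W},\beta}$ is free of rank one over $\BQ[\hbar]$. Its generator sits in the same virtual degree $\deg+(\bn,\bn)'$ (computed in $\CV^+$) as the classes $1\in\mathcal{A}_{\bs^i}$, namely zero, and the shuffle product preserves this degree. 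Hence any \emph{nonzero} iterated bracket of these classes spans the bottom degree of $\mathfrak{g}^T_{\tilde{Q},\tilde{W},\beta}$ and so generates it. Non-vanishing follows from injectivity of $\Phi\otimes\field$, applied to the root vectors $X^+_{\beta,0}$. You do obtain that injectivity from Step 1, Theorem \ref{thm:shuffle loc} and a dimension count over $\field$.
\end{enumerate}
You also need the $u$-tower. Here $u$ acts on $\CV^+$ by multiplication by $\sum_{i,a}z_{ia}$, which is a derivation of the shuffle product sending $z_{i1}^r$ to $z_{i1}^{r+1}$. Hence $\tbps[u]\subset\mathrm{Im}\,\Phi$, Theorem \ref{thm:nilpotentintegrality} gives surjectivity, and your Step 3 finishes the proof.

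\textbf{The sign issue in Step 1.} The sign cannot be absorbed by the twist $(-1)^{(\bn,\bn')}$. A twist by a symmetric form multiplies $E*E'$ and $E'*E$ by the same sign, so it leaves $\zeta_{ij}(x)/\zeta_{ji}(-x)=-\frac{x+\hbar/2}{x-\hbar/2}$ unchanged. You need a twist by a non-symmetric form whose symmetrization is $(\cdot,\cdot)$, for instance the Euler form of $Q$. Also, your ratio $\frac{x-\hbar}{x+\hbar}$ for $i=j$ matches the relation you display only after $\hbar\mapsto-\hbar$, for example by sending $X^+_{i,r}\mapsto(-z_{i1})^r$.
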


\medskip 

\subsubsection{The coproduct} Let $\Delta$ denote the coproduct of $Y_{\hbar}(\fg)$ introduced by Drinfeld. Let us give precise formulas for its action on some particularly important elements of the Yangian (see \cite{Lev2} for reference). Let 
\[
T_{i,1}:= H_{i,1}- \frac{\hbar}{2} H^2_{i,0} .
\] 
The defining relations of the Yangian imply that 
\[ 
[T_{i,1},X^+_{j,r}]=c_{ij} X^+_{j,r+1}.
\] 
where $c_{ij}$ denote the entries of the Cartan matrix. Since the Cartan matrix of $\fg$ is invertible, there are unique rational numbers $\{\lambda_i\}_{i \in I}$ such that the element 
\begin{equation}
\label{eqn:t}
T = \sum_{i \in I} \lambda_i T_{i,1}
\end{equation}
satisfies the identities 
\begin{equation}
\label{eqn:ad t}
[T,X^+_{i,r}]=X^+_{i+1,r}
\end{equation}
for all $i\in I$ and $r \geq 0$. Then the coproduct of $Y_{\hbar}(\fg)$ satisfies
\begin{equation}
\label{eqn:cop ti}
\Delta(T_{i,1})= T_{i,1} \otimes 1 + 1 \otimes T_{i,1} + \hbar  R_i
\end{equation}
for all $i \in I$, where 
\begin{equation}
\label{eqn:r matrix}
R_i = - \sum_{\alpha \in \Delta+} (\alpha,\alpha_i) X^-_{\alpha,0} \otimes X^+_{\alpha,0}. 
\end{equation}
Above, $X_{\alpha,0}^\pm$ are root vectors of $\fg$, and they satisfy the coproduct formulas 
\begin{equation}
\label{eqn:cop x}
\Delta(X^{\pm}_{\alpha,0}) = X^{\pm}_{\alpha,0} \otimes 1+ 1 \otimes X^{\pm}_{\alpha,0} 
\end{equation}
Let $R = \sum_{i \in I} \lambda_i R_i$. We conclude that
\begin{equation}
\label{eqn:cop t}
\Delta(T) = T \otimes 1+ 1 \otimes T + \hbar R.
\end{equation}
The counit satisfies the properties $\epsilon(T_{i,1})=\epsilon(X^{\pm}_{\alpha,0})=0$ for all $i \in I$ and $\alpha \in \Delta^+$. 

\medskip

\subsubsection{Drinfeld-Gavarini dual}Let $\mathbf{Y}_{\hbar}(\fg)$ denote the Drinfeld-Gavarini dual of the Yangian defined in \cite{Gavarini}. It is defined as follows. For any $n \geq 1$, let $\Delta^{(n-1)}: A \rightarrow A^{\otimes n}$ denote the $n-1$th iterated coproduct and let 
\[
\delta_n = (\mathrm{id}-\epsilon)^{\otimes n} \Delta^{(n-1)}: A \rightarrow  A^{\otimes n}
\]
The Drinfeld-Gavarini dual $\mathbf{Y}_{\hbar}(\fg)$ is defined as the $\mathbb{Q}[\hbar]$-subalgebra of $Y_{\hbar}(\fg)$ spanned by elements $z \in Y_{\hbar}(\fg)$ such that 
\begin{equation}
\label{eqn:dg}
\delta_n(z) \in \hbar^n Y_{\hbar}(\fg)^{\otimes n}
\end{equation}
for all $n \geq 1$. It is studied in Appendix A of \cite{FT} by Finkelberg-Tsymbaliuk-Weekes and by Wendlandt in \cite[Section 5]{CWendl}. Let $\mathbf{Y}^+_{\hbar}(\fg)$ denote its positive half, consisting of elements $z \in Y^+_{\hbar}(\fg)$ which satisfy \eqref{eqn:dg}. 

\medskip 

\begin{proposition}
\label{prop:pbw}

The elements
\begin{equation}
\label{eqn:elements}
\mathrm{ad}_T^r(\hbar X^+_{\alpha,0})
\end{equation}
are inside $\mathbf{Y}^+_{\hbar}(\fg)$, for all $\alpha \in \Delta^+$ and $r \geq 0$.

\end{proposition}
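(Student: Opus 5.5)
We argue by induction on $r$, using the explicit coproduct formulas \eqref{eqn:cop x} and \eqref{eqn:cop t}. The element $z_r := \mathrm{ad}_T^r(\hbar X^+_{\alpha,0})$ lies in $Y^+_\hbar(\fg)$, because $\mathrm{ad}_T$ preserves $Y^+_\hbar(\fg)$ by \eqref{eqn:ad t}. So it suffices to prove the divisibility \eqref{eqn:dg}: $\delta_n(z_r) \in \hbar^n Y_\hbar(\fg)^{\otimes n}$ for all $n \geq 1$.

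The base case $r=0$ is immediate. Since $X^+_{\alpha,0}$ is primitive by \eqref{eqn:cop x}, $\Delta^{(n-1)}(X^+_{\alpha,0})$ is a sum of terms with exactly one non-unit tensor factor. Hence $\delta_n(X^+_{\alpha,0})=0$ for $n\geq 2$, and $\delta_1(\hbar X^+_{\alpha,0}) = \hbar X^+_{\alpha,0}$ because $\epsilon(X^+_{\alpha,0})=0$.

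For the inductive step we use the standard reformulation of the Drinfeld--Gavarini condition: $\mathbf{Y}_\hbar(\fg)$ is a $\mathbb{Q}[\hbar]$-subalgebra, and $x \in \mathbf{Y}_\hbar(\fg)$ implies $[\hbar^{-1}\cdot(\text{anything in }\hbar\text{-adic sense})]$... More concretely, we use the known fact (see \cite{Gavarini}, or the property ``$[\mathbf{Y},\mathbf{Y}] \subseteq \hbar \mathbf{Y}$'') that $\mathbf{Y}_\hbar(\fg)$ is closed under $\hbar^{-1}[\cdot,\cdot]$. Observe that $\hbar T \in \mathbf{Y}_\hbar(\fg)$. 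Indeed, by \eqref{eqn:cop t} and the counit values, $\delta_1(\hbar T)=\hbar T$ and $\delta_2(\hbar T) = \hbar^2 R$. For $n\geq 3$, coassociativity together with the primitivity of the root vectors appearing in $R$ shows that $\Delta^{(n-1)}(T)$ is a sum of terms with at most two non-unit tensor factors. Therefore $\delta_n(\hbar T)=0$ for $n\geq 3$. Given this, we get $z_{r+1} = \hbar^{-1}[\hbar T, z_r] \in \mathbf{Y}_\hbar(\fg)$.

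To avoid quoting the closure property, we can verify it directly. Since $\delta$ is built from $\Delta$, which is an algebra map, we may expand $\Delta^{(n-1)}([T,z_r]) = [\Delta^{(n-1)}(T), \Delta^{(n-1)}(z_r)]$. Write $\Delta^{(n-1)}(T) = \sum_k T^{(k)} + \hbar\sum_{k<l} R^{(kl)}$. Projecting with $(\mathrm{id}-\epsilon)^{\otimes n}$ and using that $\epsilon$ is a character, the commutator with $T^{(k)}$ acts factorwise. This term preserves the filtration given by the number of non-counit factors times powers of $\hbar$, so it preserves the bound $\hbar^{n}$ coming from the inductive hypothesis.

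The terms $\hbar R^{(kl)}$ are the delicate part, and the main obstacle. They can merge two slots, or fill a slot that was previously counit-trivial, while contributing only one explicit factor of $\hbar$. The expected mechanism is that such a term involves a commutator in which one slot carries a unit tensor factor. When a new slot is created, its entry $X^\pm_{\beta,0}\cdot 1$ is killed unless paired with $\delta_{n-1}$-pieces. This yields the estimate $\hbar\cdot\hbar^{n-1}=\hbar^n$. We plan to organize this bookkeeping as the identity
$$\delta_n([a,b]) = \sum_{S\cup S'=\{1,\dots,n\}} [\delta_S(a),\delta_{S'}(b)],$$
where the sum runs over covers of $\{1,\dots,n\}$ by two subsets. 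Applying it with $a=\hbar T$, where $\delta_S(\hbar T)\in\hbar^{|S|}$ and $|S|\le 2$, gives $\hbar^{|S|+|S'|}$. This is at least $\hbar^{n}$, with an extra factor $\hbar$ whenever $S\cap S'\neq\emptyset$; it fails only when $S$ and $S'$ are disjoint. In that case the two factors commute, so the bracket vanishes. Hence $\delta_n([\hbar T,z_r])\in\hbar^{n+1}$, and dividing by $\hbar$ completes the induction.
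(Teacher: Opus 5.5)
Your proposal is correct and follows the paper's proof: both check $\hbar X^+_{\alpha,0}\in\mathbf{Y}^+_\hbar(\fg)$ and $\hbar T\in\mathbf{Y}_\hbar(\fg)$ using the primitivity of $X^\pm_{\alpha,0}$ and formula \eqref{eqn:cop t}, and then iterate the closure of $\mathbf{Y}_\hbar(\fg)$ under $\hbar^{-1}[\cdot,\cdot]$. The only differences are that the paper cites this closure property from \cite[Lemma A.3]{FT}, whereas your last paragraph proves it via the standard identity $\delta_n([a,b])=\sum_{S\cup S'=\{1,\dots,n\}}[\delta_S(a),\delta_{S'}(b)]$ (so the garbled sentence and the heuristic paragraph about the $\hbar R^{(kl)}$ terms can simply be deleted), and that you also spell out membership in $Y^+_\hbar(\fg)$, which the paper leaves implicit.
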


\medskip 

\begin{proof} It was shown in \cite[Lemma A.3]{FT} that if $a,b \in \mathbf{Y}_{\hbar}(\fg)$, then 
$$
\frac{[a,b]}{\hbar} \in \mathbf{Y}_{\hbar}(\fg)
$$
Thus it suffices to check that $\hbar X^+_{\alpha,0} \in \mathbf{Y}^+_{\hbar}(\fg)$ and $\hbar T \in \mathbf{Y}_{\hbar}(\fg)$. For the first of these, we use the fact that 
$$
\delta_n(X^+_{\alpha,0}) = \begin{cases} X^+_{\alpha,0} &\text{if }n = 1 \\ 0 &\text{if }n > 1 \end{cases}
$$ 
which implies $\hbar X^+_{\alpha,0} \in \mathbf{Y}^+_{\hbar}(\fg)$. The analogous property for $T$  follows from
$$
\delta_n(T)= \begin{cases} T &\text{if }n=1 \\ (\mathrm{id}-\epsilon)^{\otimes 2}(T \otimes 1 + 1 \otimes T + \hbar R) = \hbar (\mathrm{id}-\epsilon)^{\otimes 2}(R) &\text{if }n = 2 \\ 0&\text{if }n > 2 \end{cases}
$$
(the $n>2$ statement follows from the fact that if we apply the coproduct any positive number of times to \eqref{eqn:cop t}, every summand will contain at least one tensor factor of $1$ due to the explicit form of the $R$-matrix in \eqref{eqn:r matrix}, and $(\text{id} - \epsilon)(1) = 0$). \end{proof}

\medskip 

\begin{theorem}
\label{thm:ade}

The morphism $\Phi$ restricts to an isomorphism of $\mathbb{Q}[\hbar]$-algebras 
\begin{equation}
\label{eqn:ade}
\mathbf{Y}^+_{\hbar}(\fg) \simeq \mathcal{A}^{T,\omega\emph{-nilp},\chi}_{\tilde{Q},\tilde{W},\emph{rat}} 
\end{equation}

\end{theorem}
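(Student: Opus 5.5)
We need to show that $\Phi$ maps $\mathbf{Y}^+_\hbar(\fg)$ onto the image of $s$, i.e.\ onto the loop-nilpotent CoHA viewed inside $\mathcal{A}^{T,\chi}_{\tQ,\tW,\text{rat}}$. The plan is to prove the two inclusions separately, using generators on the CoHA side and the Drinfeld--Gavarini condition on the Yangian side.

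\emph{Inclusion $\Phi(\mathbf{Y}^+_\hbar)\supseteq$ loop-nilpotent CoHA.} By Corollary \ref{cor:nilpotentassubalgebra}, $\kharat$ is generated by $\hbar\,\tbps[u]$. For ADE quivers the BPS Lie algebra is $\fn^+[\ldots]$, concentrated in real root dimension vectors $\alpha\in\Delta^+$ with one-dimensional pieces (Kac polynomial $1$). Moreover $\Phi$ sends $X^+_{\alpha,0}$ to a generator of $\tbps_\alpha$ up to a nonzero scalar. The $u$-action, i.e.\ the determinant line bundle action \eqref{actionofBGM}, should correspond under $\Phi$ to $\mathrm{ad}_T$. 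Indeed, by \eqref{eqn:ad t}, $\mathrm{ad}_T$ acts on the shuffle side as multiplication by $\sum_a z_{ia}$ for an appropriate normalization of the $\lambda_i$, and this is exactly $c_1$ of the determinant. So I would verify that $\mathrm{ad}_T$ is a derivation agreeing with multiplication by $\sum_{i,a}\lambda'_i z_{ia}$ on generators, hence on everything. Then $\hbar\,\tbps[u]$ is spanned by $\Phi(\mathrm{ad}_T^r(\hbar X^+_{\alpha,0}))$, and these lie in $\mathbf{Y}^+_\hbar$ by Proposition \ref{prop:pbw}. Since $\mathbf{Y}^+_\hbar$ is a subalgebra, this gives the inclusion.

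\emph{Inclusion $\Phi(\mathbf{Y}^+_\hbar)\subseteq$ loop-nilpotent CoHA.} Here I would use a PBW/size argument. The elements \eqref{eqn:elements}, ordered by a convex order on $\Delta^+$, give a PBW basis of $Y^+_\hbar(\fg)$ after inverting $\hbar$, with $\hbar X^+$ replaced by $X^+$. The Drinfeld--Gavarini dual positive half is known (FTW Appendix A, Wendlandt) to be the free $\mathbb{Q}[\hbar]$-span of ordered monomials in $\mathrm{ad}_T^r(\hbar X^+_{\alpha,0})$; this is the standard PBW description of $\mathbf{Y}_\hbar$. If that is granted, every element of $\mathbf{Y}^+_\hbar$ is a polynomial in elements already shown to map into $\kharat$, and we are done. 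Without citing it, I would argue via gradings and torsion. Both sides are free $\mathbb{Q}[\hbar]$-modules that agree after inverting $\hbar$: $\mathbf{Y}^+$ is a sub of $Y^+$ containing $\hbar^{N}Y^+$ in each weight, and the loop-nilpotent CoHA becomes the full CoHA after localization by \eqref{eqn:isomorphism1}. Their graded ranks agree, since by \eqref{eqn:nilpotentintegrality} and $\tbpsw=\hbar\tbps$ the graded dimension is that of $\mathrm{Sym}(\hbar\fn^+[u])$. Given the inclusion already proved and equal graded dimensions in each finite-dimensional graded piece, equality follows once $\mathbf{Y}^+_\hbar$ has graded dimension bounded above by that of $\mathrm{Sym}(\hbar\fn^+[u])$.

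\emph{Main obstacle.} The hard step is this upper bound on $\mathbf{Y}^+_\hbar$, i.e.\ showing that the DG condition \eqref{eqn:dg} forces each weight-$\bn$ element to have "$\hbar$-order" at least the number of PBW factors. I would try to filter by the number of tensor factors: if $z=\sum c_m\,(\text{PBW monomial})$ with some coefficient not divisible by enough powers of $\hbar$, apply $\delta_n$ with $n$ the maximal monomial length. Modulo $\hbar$, $\delta_n$ of a length-$n$ monomial in the $X^+_{\alpha,0}$-type leading terms is the symmetrized tensor of its factors, because the coproduct is cocommutative mod $\hbar$. This is nonzero and independent for distinct monomials, so it contradicts divisibility by $\hbar^n$. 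A descending induction on length then finishes the argument.
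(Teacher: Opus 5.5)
Your main line is the paper's proof. The first inclusion uses Corollary \ref{cor:nilpotentassubalgebra}, Proposition \ref{prop:pbw} and the intertwining of $\mathrm{ad}_T$ with $u\cdot$. You check that intertwining directly: $u$ acts in the shuffle model by multiplication by $\sum_{i,a}z_{ia}$, which is visibly a derivation, whereas the paper cites \cite[Proposition 4.1]{DW}. The reverse direction compares $\nn\times\BZ$-graded dimensions, and its only nontrivial input is an upper bound on the graded dimension of $\mathbf{Y}^+_\hbar(\fg)$. The paper takes this bound from Wendlandt's PBW theorem \cite[Proposition 5.2]{CWendl}. With that citation your proof is complete. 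You also do not need the PBW basis to consist of monomials in exactly the elements $\mathrm{ad}_T^r(\hbar X^+_{\alpha,0})$; the inclusion plus equal finite graded dimensions suffices.

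The self-contained substitute you sketch for the upper bound does not work as written. Take $n=N$, the maximal length of a PBW monomial $X_m$ in $z=\sum_m c_mX_m$. Reducing $\delta_N(z)$ modulo $\hbar$ only shows that the top-length coefficients are divisible by $\hbar$, not by $\hbar^N$. The problem is a shorter monomial $X_{m'}$ with a unit coefficient. It contributes $c_{m'}\delta_N(X_{m'})$, which is only known to lie in $\hbar^{N-\ell(m')}Y_\hbar(\fg)^{\otimes N}$. Its higher-order parts, coming from the non-primitive $\hbar$-corrections of $\Delta$ on the $X^+_{\alpha,r}$ with $r>0$, could a priori cancel the $\hbar^k$-part of the top-length contribution for $1\le k<N$. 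Since the shorter coefficients are not yet controlled, your descending induction on length cannot start.

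The fix is to induct on the $\hbar$-adic valuation instead.
\begin{itemize}
\item First subtract the terms with $\hbar^{\ell(m)}\mid c_m$. These lie in $\mathbf{Y}^+_\hbar(\fg)$ by Proposition \ref{prop:pbw} and the subalgebra property, so afterwards $\mathrm{ord}_\hbar(c_m)<\ell(m)$ for every remaining $m$.
\item Let $k$ be the minimal $\mathrm{ord}_\hbar(c_m)$, and let $n$ be the maximal length among the $m$ with $\mathrm{ord}_\hbar(c_m)=k$.
\item Longer monomials then have coefficients of order $>k$. Shorter ones contribute to $\delta_n(z)$ inside $\hbar^{\mathrm{ord}_\hbar(c_m)+n-\ell(m)}Y_\hbar(\fg)^{\otimes n}\subseteq \hbar^{k+1}Y_\hbar(\fg)^{\otimes n}$. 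This holds because $\hbar^{\ell(m)}X_m\in\mathbf{Y}_\hbar(\fg)$ and the tensor power is $\hbar$-torsion free.
\item Hence $\hbar^{-k}\delta_n(z)$ reduces modulo $\hbar$ to a nonzero combination of symmetrized tensors of distinct PBW monomials in $U(\fn^+[t])$. Here one uses that $\mathrm{ad}_T^rX^+_{\alpha,0}$ reduces to a nonzero multiple of a root vector times $t^r$.
\item So $\delta_n(z)\notin \hbar^{k+1}Y_\hbar(\fg)^{\otimes n}\supseteq\hbar^nY_\hbar(\fg)^{\otimes n}$, since $k<n$. This contradicts \eqref{eqn:dg}.
\end{itemize}
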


\medskip 

\begin{proof} Let us recall the map
$$ 
\mathcal{A}^{T,\omega\text{-nilp},\chi}_{\tilde{Q},\tilde{W},\text{rat}}  \xrightarrow{u \cdot} \mathcal{A}^{T,\omega\text{-nilp},\chi}_{\tilde{Q},\tilde{W},\text{rat}}
$$ 
defined in \eqref{eqn:what is u}. It is proved in \cite[Proposition 4.1]{DW} that $u \cdot$ is a derivation. Note that the homomorphism $\Phi$ of \eqref{eqn:phi} intertwines $u \cdot $ with $\text{ad}_T$ of \eqref{eqn:t}, i.e. 
\begin{equation} 
\label{eqn:ucommute}
  \Phi \left([T,z]\right) = u \cdot \left(\Phi(z)\right), \qquad \forall z \in Y_{\hbar}^+(\fg)  
\end{equation}
To see this, it is enough to prove \eqref{eqn:ucommute} for $z = X_{i,r}^+$, in which case it follows immediately from \eqref{eqn:ad t}. It is proven in \cite{davison2022bps} that 
\begin{equation}
\label{eqn:iso lie}
\tbps \simeq \fn^+ \otimes_{\BQ} \mathbb{Q}[\hbar]
\end{equation}
where $\mathfrak{n}^+$ denotes the positive nilpotent subalgebra of $\fg$. For any $i \in I$, the element 
$$
e_{\alpha_i} = 1 \in \tbps \cap \mathcal{A}^{T,\chi}_{\tilde{Q},\tilde{W},\bs^i}
$$
corresponds to the $i$-th simple root vector of $\fn^+$ under \eqref{eqn:iso lie}. For any positive root $\alpha \in \Delta^+$, we can use iterated Lie brackets to define
$$
e_{\alpha} \in \tbps
$$
just like the $\alpha$-th root vector of $\fn^+$ is obtained from iterated Lie brackets from the simple root vectors. By the very definition, we then have
$$
\Phi(X^+_{\alpha,0}) = e_{\alpha}
$$
for all positive roots $\alpha \in \Delta^+$. Therefore, formula \eqref{eqn:ucommute} implies that
\begin{equation}
\label{eqn:jump}
\Phi(\mathrm{ad}_T^r(\hbar X^+_{\alpha,0})) = \hbar e_{\alpha} u^r \in \hbar \tbps[u].
\end{equation}
and so we conclude that 
$$
\Phi \left(\bigoplus_{\alpha \in \Delta^+} \bigoplus_{r = 0}^{\infty} \text{ad}_T^r(\hbar X^+_{\alpha,0})\right) = \hbar \tbps[u]
$$ 
By Corollary \ref{cor:nilpotentassubalgebra}, $\hbar \tbps[u]$ generates $\mathcal{A}^{T,\omega\text{-nilp},\chi}_{\tilde{Q},\tilde{W},\mathrm{rat}}$ and thus \begin{equation}
\label{eqn:containment}
\mathcal{A}^{T,\omega\text{-nilp},\chi}_{\tilde{Q},\tilde{W},\mathrm{rat}} \subseteq  \Phi(\mathbf{Y}^+_{\hbar}(\fg)).
\end{equation}
It remains to prove that the above inclusion is in fact an equality, and to do this, we will compare the graded dimensions (on the right-hand side, we consider the $\nn \times \BZ$ grading determined by $\mathrm{deg}(X^+_{i,r}) = (\bs^i, 2r)$ and $\mathrm{deg}(\hbar) = (0,2)$). For the left-hand side, the cohomological integrality Theorem \ref{thm:nilpotentintegrality} implies that
\[ 
\text{grdim}_{\BQ}\Big( \mathcal{A}^{T,\omega\text{-nilp},\chi}_{\tilde{Q},\tilde{W},\mathrm{rat}}  \Big)  = \text{grdim}_{\BQ} \left( \text{Sym} \left( \hbar \tbps[u]\right) \right)
\]
(the notation ``\text{grdim}'' stands for the $\nn \times \BZ$ graded dimension). Concerning the right-hand side of \eqref{eqn:containment}, it follows by the PBW theorem of \cite[Proposition 5.2]{CWendl} that 
\[ 
\text{grdim}_{\BQ}\left(\mathbf{Y}^+_{\hbar}(\fg)\right) =  \text{grdim}_{\BQ} \left( \text{Sym} \left( \hbar \fn^+[u]\right) \right)
\]
Comparing the two displays above implies that the inclusion in \eqref{eqn:containment} is an equality. \end{proof}

\medskip 

\subsection{The Jordan quiver}
\label{sub:jordan quiver}

The initial appearance of the divisibility conditions of Definition \ref{def:shuffle int} was in \cite{Integral}, where the first-named author used them to produce a shuffle realization of the semi-nilpotent commuting variety. The cohomological analogue of the contents of \loccit is the following: when $Q$ is the Jordan quiver, $\tQ$ is the quiver with one vertex and three loops
$$
\alpha, \dalpha, \omega
$$
on which the torus $T = (\BC^*)^2$ acts with equivariant parameters
$$
-u, -\hbar+u, \hbar 
$$
respectively. Suppose that instead of defining the $\omega$-nilpotent CoHA (as we did in the present paper), one defines the analogous CoHA
$$
\CA^{T,\dalpha\text{-nilp}}_{\tQ,\tW}
$$
corresponding to the condition that $\dalpha$ is nilpotent. The dimensional reduction theorem (see Subsection \ref{sub:injection}) yields an isomorphism
$$
\CA^{T,\dalpha\text{-nilp}}_{\tQ,\tW} \cong H^{\mathrm{BM}}_T \Big(\text{semi-nilpotent commuting stack} \Big)
$$
where the stack in the right-hand side parameterizes pairs of commuting square matrices $X,Y$ with $Y$ being nilpotent, modulo simultaneous conjugation. Then by analogy with Proposition \ref{prop:yu zhao}, we have an isomorphism
$$
 H^{\mathrm{BM}}_T\Big(\text{semi-nilpotent commuting stack} \Big) \cong \left( E \in \bigoplus_{n = 0}^{\infty} \ring[z_{1},\dots,z_{n}]^{\text{sym}} \text{ satisfying \eqref{eqn:specialization jordan}} \right)
$$
where for any partition $n = (n^{(1)} \geq \dots \geq n^{(d)})$ we impose the condition that
\begin{equation}
\label{eqn:specialization jordan}
E \left(x_{a},x_{a}+ u-\hbar,\dots,x_{a} + (n^{(a)}-1)(u-\hbar) \right)_{a \in \{1,\dots,d\}} 
\end{equation}
$$
\text{is divisible by } (u-\hbar)^{n} \left(  \prod_{a=1}^{d} n^{(a)}! \right)  \prod_{1 \leq a,b \leq d} \prod_{c \in \BZ} \left(x_{b}-x_{a}+c(u-\hbar) - u\right)^{\chi_{n^{(a)},n^{(b)}}(c)}
$$
This is readily seen to be the cohomological version of \cite[Theorem 1.4]{Integral}. We note that the semi-nilpotent commuting stack naturally arises in connection to categorified link invariants (specifically the trace of the affine Hecke category) as explained in \cite{GN1, GN2}. 

\medskip 

\begin{remark}

The particular case of Proposition \ref{prop:spec is comm} for the Jordan quiver was recently proved (by different means from ours) in \cite{H}.
    
\end{remark}

\bigskip

\printbibliography

\end{document}